\documentclass[10pt]{article}
\usepackage{amssymb,amsmath,amsfonts,amsthm,enumerate,color}
\usepackage[toc,page,title,titletoc,header]{appendix}
\usepackage{graphicx}
\usepackage{indentfirst}
\usepackage{multicol}
\usepackage{booktabs}

\setlength\topmargin{-1cm} \setlength\textheight{220mm}
\setlength\oddsidemargin{0mm}
\setlength\evensidemargin\oddsidemargin \setlength\textwidth{160mm}
\setlength\baselineskip{18pt}


\numberwithin{equation}{section}

\newtheorem{Theorem}{Theorem}[section]
\newtheorem{Lemma}[Theorem]{Lemma}
\newtheorem{Proposition}[Theorem]{Proposition}

\newtheorem{Corollary}[Theorem]{Corollary}

\numberwithin{equation}{section}

\def\no{\noindent} \def\p{\partial} 
\def \Vh0{\stackrel{\circ}{V}_h} \def\to{\rightarrow}

\newcommand{\q}{\quad}   \newcommand{\qq}{\qquad} 
\def\l{\label}  \def\f{\frac}  

\def\m{\mbox}   
\def\ms{\medskip}  \def\ss{\smallskip}

\def\p{\partial}

\newcommand{\lc}
{\mathrel{\raise2pt\hbox{${\mathop<\limits_{\raise1pt\hbox
{\mbox{$\sim$}}}}$}}}

\newcommand{\gc}
{\mathrel{\raise2pt\hbox{${\mathop>\limits_{\raise1pt\hbox{\mbox{$\sim$}}}}$}}}

\newcommand{\ec}
{\mathrel{\raise2pt\hbox{${\mathop=\limits_{\raise1pt\hbox{\mbox{$\sim$}}}}$}}}

\def\bb{\begin{equation}} \def\ee{\end{equation}}

\def\beqn{\begin{eqnarray}}  \def\eqn{\end{eqnarray}}

\def\beqnx{\begin{eqnarray*}} \def\eqnx{\end{eqnarray*}}

\def\bn{\begin{enumerate}} \def\en{\end{enumerate}}

\def\bd{\begin{description}} \def\ed{\end{description}}

\makeatletter
\newenvironment{tablehere}
  {\def\@captype{table}}
  {}
\newenvironment{figurehere}
  {\def\@captype{figure}}
  {}
\makeatother

\title{Optimal Shape Design by Partial Spectral Data}

\author{
Habib Ammari\footnote{Department of Mathematics and Applications, Ecole Normale Sup$\acute{\text{e}}$rieure, 45 Rue d'Ulm, 75005 Paris, France.
The work of this author
was supported by ERC Advanced Grant Project MULTIMOD--267184.
(habib.ammari@ens.fr).}
\and Yat Tin Chow\footnote{Department of Mathematics, Chinese University of Hong Kong, Shatin, N.T., Hong Kong (ytchow@math.cuhk.edu.hk, kjliu@math.cuhk.edu.hk).}
\and Keji Liu\footnotemark[3]
\and Jun Zou\footnote{The work of this author was substantially supported by Hong Kong RGC grants
(projects 405513 and 404611). (zou@math.cuhk.edu.hk)}
}

\begin{document}

\date{}
\maketitle

\begin{abstract}
In this paper, we are concerned with a shape design problem, in which our target is to design, up to rigid transformations and scaling, the shape of an object given either its  polarization tensor at multiple contrasts or
the partial eigenvalues of its Neumann-Poincar\'e  operator, which are known as the Fredholm eigenvalues. We begin by proposing
to recover the eigenvalues of the Neumann-Poincar\'e operator from the polarization tensor by means
of the holomorphic functional calculus.  Then we develop a
regularized Gauss-Newton optimization method for the shape reconstruction process.
We present numerical results to demonstrate the effectiveness of the proposed methods and to illustrate important properties of the Fredholm eigenvalues and their associated eigenfunctions.
Our results are expected to have important applications in the design of plasmon resonances in nanoparticles as well as in the multifrequency or pulsed imaging of small anomalies.
\end{abstract}

\bigskip

\noindent {\bf Mathematics Subject Classification}
(MSC\,2000): 49J20, 47A75,  35R30, 35B30.

\noindent {\bf Keywords}: optimal shape design, plasmonics, polarization tensor, Fredholm eigenvalues, Neumann-Poincar\'e operator, pulsed electrical capacitance tomography.

%

\section{Introduction}
Fredholm eigenvalues are the eigenvalues of the integral Neumann-Poincar\'e operator, which arises naturally in solving Neumann transmission problems for the Laplacian.  They depend on the shape of the domain but they are invariant under rigid transformations and scaling.
They
have been the subject of intensive investigations; see, for instance, \cite{ahlfors,schiffer1,schiffer2,springer}. Spectral analysis of Neumann-Poincar\'e type operators has played a key role in
the mathematical justification of cloaking due to
  anomalous localized resonance  \cite{milton} and in the analysis of gradient blow-up phenomena in the presence of nearly touching inclusions \cite{ciraolo, triki2, triki}.  We also refer to \cite{shapiro} where new and interesting facts on spectral analysis
related to the Neumann-Poincar\'e integral operator have been obtained and to the works on plasmon resonances \cite{plasmon4,plasmon1,plasmon3}. Plasmon resonant nanoparticles such as gold nanoparticles offer, in addition to their biocompatibility, enhanced scattering and absorption, making them  not only suitable for use as a contrast agent but also in therapeutic applications \cite{plasmon4}. Recently, it has been shown that plasmon resonances in nanoparticles can be treated as an eigenvalue problem for the Neumann-Poincar\'e operator, which leads to direct calculation of resonance values of permittivity and resonance frequency \cite{grieser, plasmon3}.
In biomedical applications, it is challenging to design nanoparticles that resonate at specified frequencies. It is the purpose of the paper to propose an efficient approach for solving the optimal design problem (up to rigid transformations and scaling) from partial Fredholm eigenvalues.

Shape identification  from Fredholm eigenvalues has also important applications in imaging. In electrosensing,
 the polarization tensor (PT) of a target at multiple frequencies (or equivalently at multiple contrasts) can be reconstructed from electrical capacitance measurements \cite{boulier3,boulier1,boulier2,seo1,seo2}. The PT
arises naturally when we describe the perturbation of the electrical potential due to the presence of the target whose admittivity is different from that of the background.
In fact, the polarization tensor of an inclusion can be expressed in terms of the Neumann-Poincar\'e operator and the admittivity contrast.

In this paper, we first show that the Fredholm eigenvalues can be reconstructed from the polarization tensor at multiple contrasts. By doing so, we connect design problems for plasmon resonances in nanoparticles to the imaging of small anomalies.  Moreover, we show how to
obtain in practice the polarization tensor at multiple contrasts from electrical capacitance tomography measurements. By probing the domain with an electric pulse, the polarization tensor of the anomaly at multiple frequencies and therefore at multiple contrasts can be recovered
  \cite{pulse2, pulse}; see Appendix \ref{curve}. We optimize the pulse shape in order to reconstruct in the most stable way the first few Fredholm eigenvalues.

Then we consider the shape reconstruction problem (up to rigid transformations and scaling), in which we wish to reconstruct a shape from only the prior knowledge of the first several Fredholm eigenvalues of the Neumann-Poincar\'e operator. We start by giving both analytical and numerical evidence that the first Fredohlm eigenvalues contain only low-frequency information about the shape of the domain while higher ones contain higher frequency information. We estimate the oscillation behavior of the associated eigenfunctions. We also emphasize the exponential decay of the Fredholm eigenvalues in the two dimensional case. This clearly makes the design problem exponentially ill-posed. Therefore, we should restrict ourselves to low-frequency shape reconstructions from the few first Fredholm eigenvalues.

We also derive Hadamard's formula for the Fredholm eigenvalues. Based on Osborn's theorem \cite{osborn}, we compute the shape derivative of Fredholm eigenvalues using the shape derivative of the Neumann-Poincar\'e operator. Then we propose a minimization algorithm to reconstruct a domain given its first Fredholm eigenvalues. In view of the invariance of the Fredholm eigenvalues under rigid transformations and scaling, we incorporate
some effective penalty and regularization terms in the cost functional to ensure the local existence and uniqueness of its minimizers. We will further present several numerical illustrations of our main findings.

Our results on Fredholm eigenvalues and on the polarization tensor are expected to have important applications not only in shape design problems but also in shape classification and recognition problems. Various other geometric quantities associated with the shape of a domain, such as eigenvalues, capacities, harmonic moments, and generalized polarization tensors are used to distinguish between objects and classify them \cite{boulier1, booknew, yu, zribi, ben, flusser_moments_2009, hu_visual_1962, liao_image_1996}. The concept of polarization tensor at
multiple contrasts seems to be the most natural one for shape classification and recognition using capacitance electrical impedance tomography.

The paper is organized as follows. In section \ref{sec2},  we introduce the Neumann-Poincar\'e operator and the concept of polarization tensor associated with a given domain and a given contrast. In section \ref{GPT},  two methods are provided for reconstructing Fredholm eigenvalues of a domain from its polarization tensor at all contrasts, then tested numerically.
Section \ref{sec4} is devoted to the derivation of a Hadamard's perturbation formula for Fredholm eigenvalues. By combining the results in \cite{zribi} on the shape derivative of the Neumann-Poincar\'e operator together with Osborn's theorem \cite{osborn}, we compute the shape derivative of Fredholm eigenvalues. In section \ref{shape}, we present and numerically test our minimization procedure for finding low-frequency features of a domain from its first few Fredholm eigenvalues. In Appendix \ref{curve}, we show the method to obtain the polarization tensors at multiple contrasts from electrical capacitance tomography measurements. In Appendix \ref{multipleobject}, we consider the case of multiply connected objects. In that case, it is remarkable to easily find the number of connected components from the multiplicity of the Fredholm eigenvalues.

\section{Neumann-Poincar\'e operator and polarization tensor} \label{sec2}
In this section, we first introduce the Neumann-Poincar\'e operator of an open connected domain $D$ with $\mathcal{C}^2$ boundary in $\mathbb{R}^d\;(d=2,3)$.  Given such a domain $D$, we consider the following Neumann problem,
\beqn
        \Delta u = 0  \q \text{ in } ~~D\,; \qq
        \frac{\partial u}{\partial \nu} = g  \q \text{ on } ~~\partial D,
        \qq \int_{\partial D} u \, d\sigma =0, \label{neumann}
\eqn
where $g \in L^2_0(\partial D)$ with $L^2_0(\partial D)$ being the set of functions in $L^2(\partial D)$ with zero mean-value. In (\ref{neumann}),   $\partial/\partial \nu$ denotes the normal derivative.
We note that the Neumann problem (\ref{neumann}) can be rewritten as a boundary integral equation with the help of the single-layer potential. Given a density function $\phi \in L^2(\partial D)$, the single-layer potential, $\mathcal{S}_{\partial D} [\phi]$, can be defined as follows,
\beqn
    \mathcal{S}_{\partial D} [\phi] (x) := \int_{\partial D} \Gamma(x-y) \phi(y) d \sigma(y)
\eqn
for $x \in \mathbb{R}^d$, where $\Gamma$ is the fundamental solution of the Laplacian in $\mathbb{R}^d$ :
\beqn
    \Gamma (x-y) =
    \begin{cases}
     -\f{1}{2\pi} \log |x-y| & \text{ if }\; d = 2 \, ,\\
     \f{1}{(2-d)\omega_d} |x-y|^{2-d} & \text{ if }\; d > 2 \, ,
    \end{cases}
    \label{fundamental}
\eqn
where $\omega_d$ denotes the surface area of the unit sphere in $\mathbb{R}^d$.  It is well-known that the single-layer potential satisfies the following jump condition on $\partial D$:
\beqn
    \f{\p}{\p \nu} \left(  \mathcal{S}_{\partial D} [\phi] \right)^{\pm} = (\pm \f{1}{2} I + \mathcal{K}^*_{ \partial D} )[\phi]\,,
    \label{jump_condition}
\eqn
where the superscripts $\pm$ indicate the limits from outside and inside $D$ respectively, and 
$\mathcal{K}^*_{\partial D}: L^2(\partial D) \rightarrow L^2(\partial D)$ is the Neumann-Poincar\'e operator defined by
\beqn
    \mathcal{K}^*_{\partial D} [\phi] (x) := \f{1}{\omega_d} \int_{\partial D} \f{\langle  x-y,\nu_x \rangle  }{|x-y|^d} \phi(y) d \sigma(y) \,, 
    \label{operatorK}
\eqn
with $\nu_x$ being 
the outward normal at $x \in \partial D$. We note that $\mathcal{K}^*_{\partial D}$ maps $L^2_0(\partial D)$ onto itself.

With these notions, the Neumann problem (\ref{neumann}) can then be formulated as
\beqn
    g = \f{\p}{\p \nu} \left(  \mathcal{S}_{\partial D} [\phi] \right)^{-} =  ( - \f{1}{2} I + \mathcal{K}^*_{ \partial D} )[\phi]  \, .
\eqn
Therefore, the solution to the Neumann problem (\ref{neumann}) can be reformulated as a solution to the boundary integral equation with the Neumann-Poincar\'e operator $\mathcal{K}^*_{ \partial D}$.

The operator $\mathcal{K}^*_{ \partial D}$ arises not only in solving the Neumann problem for the Laplacian but also for representing the solution to the transmission problem as described below.

Consider an open connected domain $D$ with $\mathcal{C}^2$ boundary in $\mathbb{R}^d$.  Given a harmonic function $u_0$ in $\mathbb{R}^d$, we consider the following transmission problem in $\mathbb{R}^d$:
\beqn
    \begin{cases}
        \nabla \cdot (\varepsilon_{D} \nabla u) = 0 &\text{ in }\; \mathbb{R}^d, \\[1.5mm]
         u - u_0 = O(|x|^{1-d}) &\text{ as }\; |x| \rightarrow \infty,
    \end{cases}
    \label{transmission}
\eqn
where $\varepsilon_{D}  = \varepsilon_c \chi(D) +  \varepsilon_m \chi(\mathbb{R}^d \backslash \overline{D})$ with $\varepsilon_c, \varepsilon_m$ being two positive constants, and $\chi(\Omega)$ is the characteristic function of the domain $\Omega= D$ or $\mathbb{R}^d \backslash \overline{D}$.  With the help of the single-layer potential, we can rewrite the perturbation  $u - u_0$, which is  due to the inclusion $D$, as
\beqn
    u - u_0 = \mathcal{S}_{\partial D} [\phi] \, ,
    \label{scattered}
\eqn
where $\phi \in L^2(\partial D)$ is an unknown density, and $\mathcal{S}_{\partial D} [\phi]$ is the refraction part of the potential in the presence of the inclusion. The transmission problem (\ref{transmission}) can be rewritten as
\beqn
    \begin{cases}
        \Delta u = 0 &\text{ in }\; D \bigcup (\mathbb{R}^d\backslash \overline{D} ) \, ,\\[1.5mm]
        u^+ = u^- &\text{ on }\; \partial D \, ,\\[1.5mm]
        \varepsilon_c \f{\p u^{+}}{\p \nu}  =  \varepsilon_m \f{\p u^{-}}{\p \nu} &\text{ on }\; \partial D \, ,\\[1.5mm]
        u - u_0 = O(|x|^{1-d}) &\text{ as }\; |x| \rightarrow \infty \, .
    \end{cases}
    \label{transmission2}
\eqn
With the help of the jump condition (\ref{jump_condition}), solving the above system (\ref{transmission2})
can be regarded as solving the density function $\phi \in L^2(\partial D)$ of the following integral equation
\beqn
    \frac{\partial u_0}{\partial \nu} =  \left( \f{\varepsilon_c+\varepsilon_m}{2(\varepsilon_c-\varepsilon_m)}I - \mathcal{K}_{\partial D}^* \right) [\phi] \, .
    \label{potential2}
\eqn
With the harmonic property of $u_0$, we can write
\beqn
    u_0 (x) = \sum_{\alpha \in \mathbb{N}^d } \f{1}{\alpha !} \partial^\alpha u_0(0) x^\alpha
\label{series}
\eqn
with $\alpha=(\alpha_1,\ldots, \alpha_d) \in \mathbb{N}^d,\, \partial_\alpha = \partial_1^{\alpha_1} \ldots \partial_d^{\alpha_d}$ and $\alpha!=\alpha_1!\ldots \alpha_d!$\,.

Consider $\phi^\alpha$ as the solution of the Neumann-Poincar\'e operator:
\beqn
   \frac{\partial x^\alpha}{\partial \nu} =  \left( \f{\varepsilon_c+\varepsilon_m}{2(\varepsilon_c-\varepsilon_m)}I - \mathcal{K}_{\partial D}^* \right) [\phi^\alpha] \, .
    \label{phi_alpha}
\eqn
The invertibilities of the operator $( \f{\varepsilon_c+\varepsilon_m}{2(\varepsilon_c-\varepsilon_m)}I - \mathcal{K}_{\partial D}^*)$
from $L^2(\partial D)$ onto $L^2(\partial D)$ and from
$L_0^2(\partial D)$ onto $L_0^2(\partial D)$ are proved, for example, in \cite{book, kellog},
provided that $|\f{\varepsilon_c+\varepsilon_m}{2(\varepsilon_c-\varepsilon_m)}| > 1/2$.
We can substitute (\ref{series}) and (\ref{phi_alpha}) back into (\ref{scattered}) to get
\beqn
    u - u_0 = \sum_{|\alpha| \geq 1 }  \f{1}{\alpha !} \partial^\alpha u_0(0) \mathcal{S}_{\partial D} [\phi^\alpha] = \sum_{|\alpha| \geq 1 } \f{1}{\alpha !} \partial^\alpha u_0(0) \int_{\partial D} \Gamma(x-y) \phi^\alpha (y) d \sigma(y) \, .
    \label{scattered2}
\eqn
Using the Taylor expansion,
\beqn
\Gamma (x-y) = \Gamma(x) - y \cdot \nabla \Gamma(x) + O(\frac{1}{|x|^{d}})\, ,
\label{series2}
\eqn
which holds for all $x$ such that $|x|\rightarrow \infty$ while $y$ is bounded \cite{book},
we get the following result by substituting (\ref{series2}) into (\ref{scattered2}) that
\beqn \label{eq15}
    (u - u_0)(x) = \nabla u_0(0) \cdot M(\lambda, D) \nabla \Gamma (x) + O(\frac{1}{|x|^{d}}) \quad \m{as} ~~|x| \rightarrow \infty,
\eqn
where $M=(m_{ij})_{i,j=1}^d$ is the polarization tensor (PT) associated with the domain $D$ and the contrast $\lambda$ defined by
\beqn
    m_{ij}(\lambda,D) :=  \int_{\partial D} y_i (\lambda I - \mathcal{K}_{\partial D}^*)^{-1} \left[\nu_j\right] (y) d \sigma(y) \, ,
    \label{polarizationtensionr}
\eqn
with $\lambda :=  \f{\varepsilon_c+\varepsilon_m}{2(\varepsilon_c-\varepsilon_m)} $\, and $\nu_j$ being the $j$-th component of $\nu$. Here we have used in (\ref{eq15}) the fact that
$\int_{\partial D} \nu \, d\sigma=0$.

Typically the constants $\varepsilon_c$ and $\varepsilon_m$ are positive in order to make the system (\ref{transmission2}) physical. This corresponds to the situation with $|\lambda| > \f{1}{2}$.

However, recent advances in nanotechnology make it possible to produce noble metal nanoparticles
with negative permittivities at optical frequencies \cite{plasmon4, plasmon2}. Therefore, we can have the possibility for some frequencies that $\lambda := \f{\varepsilon_c+\varepsilon_m}{2(\varepsilon_c-\varepsilon_m)}$ actually lies in the spectrum of $ \mathcal{K}_{\partial D}^*$.

If this happens, the following integral equation
\beqn
    0 =  \left( \lambda I - \mathcal{K}_{\partial D}^* \right) [\phi]   \quad \text{ on }\; \partial D \label{plasmonicreso}
\eqn
has non-trivial solutions $\phi \in L^2(\partial D)$ and the nanoparticle resonates at those frequencies.

Therefore, we have to investigate the mapping properties of the Neumann-Poincar\'e operator. Assume that  $\partial D$ is of class $\mathcal{C}^{1,\alpha}$. It is known that the operator $\mathcal{K}_{\partial D}^*: L^2(\partial D) \rightarrow L^2(\partial D)$ is compact \cite{kellog}, and its spectrum is discrete and accumulates at zero. All the eigenvalues are real and bounded by $1/2$. Moreover, $1/2$ is always an eigenvalue and its associated eigenspace is of dimension one,
which is nothing else but the kernel of the single-layer potential $\mathcal{S}_{\partial D}$.
In two dimensions, it can be proved that if $\lambda_i\neq 1/2$ is an eigenvalue of $\mathcal{K}_{\partial D}^*$, then $-\lambda_i$ is an eigenvalue as well. This property is known as the twin spectrum property; see \cite{plasmon1}. The Fredholm eigenvalues are the eigenvalues of $\mathcal{K}_{\partial D}^*$. It is easy to see, from the properties of $\mathcal{K}_{\partial D}^*$, that they are invariant with respect to rigid motions and scaling.
They can be explicitly computed for ellipses and spheres.  If $a$ and $b$ denote the semi-axis lengths of an ellipse then it can be shown that $\pm (\frac{a-b}{a+b})^i$ are its Fredholm eigenvalues \cite{shapiro}. For the sphere, they are given by $1/(2(2i+1))$; see \cite{seo}. It is worth noticing that the convergence to zero of Fredholm eigenvalues is exponential for ellipses while it is algebraic for spheres.

Equation (\ref{plasmonicreso}) corresponds to the case when plasmonic resonance occurs in $D$; see \cite{grieser}. The optimal shape design for Fredholm eigenvalues is of great interest in plasmonics \cite{plasmon4, plasmon1,plasmon2}.
Given negative values of $\varepsilon_c$, we show in this paper how to  design a shape with prescribed plasmonic resonances.

\section{Reconstruction of Fredholm eigenvalues from the polarization tensor} \label{GPT}

\subsection{Reconstruction method via holomorphic functional calculus} \label{func}
In this subsection we propose for two dimensions to recover the Fredholm  eigenvalues from the
polarization tensor
\beqn
    M (\lambda,D) :=  \int_{\partial D} y (\lambda I - \mathcal{K}_{\partial D}^*)^{-1} [\nu](y) d \sigma(y) \,
    \label{polarizationmatrix}
\eqn
for $\lambda$ along a simple closed curve $\gamma$ by
means of the holomorphic functional calculus.
From this expression, one observes that $M (\lambda,D)$ actually encodes vast information of the resolvent of the operator $\mathcal{K}_{\partial D}^*$ at $\lambda$,
\beqn
R_{\lambda}(\mathcal{K}_{\partial D}^*) := (\lambda I - \mathcal{K}_{\partial D}^*)^{-1} \, .
\label{resolvent}
\eqn

Motivated by this observation, we propose to recover eigenvalues $\{\lambda_i\}_{i \geq 1}$ ($\lambda_i\neq 1/2$) of $\mathcal{K}_{\partial D}^*$ from $M (\lambda,D)$ via the holomorphic functional calculus of $\mathcal{K}_{\partial D}^*$\,.
Let $\mathcal{H}$ be the space $L^2_0 (\partial D)$ equipped with the inner product
$-\langle \cdot, \mathcal{S}_{\partial D}(\cdot) \rangle_{L^2(\partial D)}$. Since $\mathcal{S}_{\partial D}$ is injective on $L^2_0(\partial D)$, $L^2_0(\partial D)$ is complete for this inner product.
If $\partial D$ is of class $\mathcal{C}^{1,\alpha}$,
 there exists a complete orthonormal set $\{\phi^\pm_i\}_{i \geq 1}$ in $\mathcal{H}$ such that $\mathcal{K}_{\partial D}^* \phi^\pm_i = \pm \lambda_i \phi^\pm_i$ for all $i \geq 1$ and the eigenvalues $1/2 > \lambda_1\geq \ldots \geq \lambda_i \rightarrow 0 $ as $i \rightarrow \infty$,
by using the self-adjointness and the compactness of the operator $\mathcal{K}_{\partial D}^*$ over
$\mathcal{H}$ and the Hilbert-Schmidt theorem; see \cite{shapiro}.
For notational sake, we will often write $\lambda_i^{\pm} := \pm \lambda_i$ in our subsequent discussions.
Then we can decompose the operator $\mathcal{K}_{\partial D}^*$ as
\beqn
\mathcal{K}_{\partial D}^* =
\sum_{i=1}^\infty \bigg\{ \lambda_i^+ \langle  \phi_i^+,\cdot \rangle_\mathcal{H}\, \,\phi_i^+ + \lambda_i^-  \langle  \phi_i^-,\cdot \rangle_\mathcal{H}\, \,\phi^-_i \bigg\}=\sum_{i=1}^\infty \lambda_i \bigg\{ \langle  \phi_i^+,\cdot \rangle_\mathcal{H}\, \,\phi_i^+ -  \langle  \phi_i^-,\cdot \rangle_\mathcal{H}\, \,\phi^-_i \bigg\} \,.
\eqn
Note that as $\mathcal{K}_{\partial D}^* $ is a pseudo-differential operator of order $-1$, the eigenfunctions $\phi^\pm_i$
oscillate as $1/\lambda_i$\,, and there exists a positive constant $C$ such that
$$
\frac{|| \frac{\partial \phi^\pm_i}{\partial T}||_{L^2(\partial D)}}{|| {\phi^\pm_i}||_{L^2(\partial D)}} \lesssim \frac{C}{\lambda_i},
$$
where $\partial/\partial T$ denotes the tangential derivative.

Now, given the Neumann-Poincar\'e operator $\mathcal{K}_{\partial D}^*$ corresponding to a shape $D$ ($D$ being an open domain
with $\mathcal{C}^{1,\alpha}$ boundary), we define, for any holomorphic function $f$ on an open domain $U \subset \mathbb{C}$ containing the spectrum of $\mathcal{K}_{\partial D}^*$, the following notion
\beqn
f (\mathcal{K}_{\partial D}^*) : = \sum_{i=1}^\infty
\bigg[
f (\lambda_i^+) \langle  \phi^+_i,\cdot \rangle_\mathcal{H} \,\phi^+_i + f (\lambda_i^-) \langle  \phi^-_i,\cdot \rangle_\mathcal{H} \,\phi^-_i \bigg]\, . \l{eq:fstar}
\eqn
Clearly, if $f$ is a polynomial in $z \in \mathbb{C}$, say $f(z) := \sum_{i=0}^N a_i z^i$ for some $N \in \mathbb{N}$,
the definition (\ref{eq:fstar})
coincides with the conventional one, i.e., $f (\mathcal{K}_{\partial D}^*) = \sum_{i=0}^N a_i \left(\mathcal{K}_{\partial D}^*\right)^i$, where $\left(\mathcal{K}_{\partial D}^*\right)^i$ means the composition of the operator $i$ times.  For our subsequent description, we may write for any $\phi\in L^2(\partial D)$ that
$$
\langle  \phi,y \rangle_{L^2(\partial D)}: = \int_{\partial D} y \phi(y) d \sigma(y)\,.
$$
Then we have the following representation result.
\begin{Lemma}
\label{lemmahol}
Given a shape $D$ and the corresponding Neumann-Poincar\'e operator $\mathcal{K}_{\partial D}^*$,
the following identity holds for the polarization tensor $M (\lambda,D)$ in \eqref{polarizationmatrix}
and any holomorphic function $f$ on an open domain $U \subset \mathbb{C}$
containing the spectrum $\sigma(\mathcal{K}_{\partial D}^*)$ of $\mathcal{K}_{\partial D}^*$:
\beqn
\f{1}{2 \pi i} \int_{\gamma} f(\lambda) M(\lambda,\partial D) d \lambda
=\int_{\partial D} y  f(\mathcal{K}_{\partial D}^*) [\nu](y) d \sigma(y)
= \sum_{i=1}^\infty \bigg[ c_i^+ f (\lambda_i^+) + c_i^- f (\lambda_i^-) \bigg] \,,
\label{Magic formula}
\eqn
where $\gamma$ is an arbitrary simple closed curve in $U$ enclosing $\sigma(\mathcal{K}_{\partial D}^*)$,
and $c_i^+ $ and $c_i^-$ are defined by
\beqn
c_i^+ := \langle  \nu, \phi^+_i \rangle_\mathcal{H} \langle  \phi^+_i,y \rangle_{L^2(\partial D)} \,,
\quad
c_i^- := \langle  \nu, \phi^-_i \rangle_\mathcal{H} \langle  \phi^-_i,y \rangle_{L^2(\partial D)}\,.
\eqn
\end{Lemma}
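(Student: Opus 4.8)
The plan is to prove the two equalities in \eqref{Magic formula} separately, working from the middle expression outward. The central object is the spectral representation \eqref{eq:fstar} of $f(\mathcal{K}_{\partial D}^*)$, so the natural strategy is: first establish the \emph{right-hand} equality by pairing \eqref{eq:fstar} against $y$ in $L^2(\partial D)$ with input $\nu$, and then establish the \emph{left-hand} equality by recognizing the contour integral of the resolvent as the holomorphic functional calculus (a Riesz--Dunford/Cauchy integral computation).

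For the right-hand equality, I would simply substitute $\phi = \nu$ into \eqref{eq:fstar} and integrate against $y$. Applying the linear functional $\langle \cdot, y\rangle_{L^2(\partial D)} = \int_{\partial D} y\,(\cdot)\,d\sigma$ term by term gives
\[
\int_{\partial D} y\, f(\mathcal{K}_{\partial D}^*)[\nu](y)\,d\sigma(y)
= \sum_{i=1}^\infty \Bigl[ f(\lambda_i^+)\,\langle \phi_i^+,\nu\rangle_\mathcal{H}\,\langle \phi_i^+, y\rangle_{L^2(\partial D)} + f(\lambda_i^-)\,\langle \phi_i^-,\nu\rangle_\mathcal{H}\,\langle \phi_i^-, y\rangle_{L^2(\partial D)}\Bigr],
\]
which is exactly $\sum_i [c_i^+ f(\lambda_i^+) + c_i^- f(\lambda_i^-)]$ once one uses that the $\mathcal{H}$-inner product is real (so $\langle \phi_i^\pm,\nu\rangle_\mathcal{H} = \langle \nu,\phi_i^\pm\rangle_\mathcal{H}$, using self-adjointness and real eigenfunctions). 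The only care needed here is justifying that the functional commutes with the infinite sum, for which I would invoke continuity of $\langle\cdot, y\rangle_{L^2(\partial D)}$ together with convergence of \eqref{eq:fstar} in $\mathcal{H}$; the equivalence of the $\mathcal{H}$-norm and $L^2$-norm topology (guaranteed by $\mathcal{S}_{\partial D}$ being bounded and injective with the quoted completeness) makes this routine.

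For the left-hand equality, the key is the holomorphic functional calculus identity
\[
f(\mathcal{K}_{\partial D}^*) = \f{1}{2\pi i}\int_{\gamma} f(\lambda)\,(\lambda I - \mathcal{K}_{\partial D}^*)^{-1}\,d\lambda,
\]
valid for $\gamma$ a simple closed curve in $U$ enclosing $\sigma(\mathcal{K}_{\partial D}^*)$. I would verify that the series definition \eqref{eq:fstar} agrees with this contour integral by expanding the resolvent $R_\lambda(\mathcal{K}_{\partial D}^*)$ in the eigenbasis as $\sum_i [(\lambda - \lambda_i^+)^{-1}\langle\phi_i^+,\cdot\rangle_\mathcal{H}\phi_i^+ + (\lambda-\lambda_i^-)^{-1}\langle\phi_i^-,\cdot\rangle_\mathcal{H}\phi_i^-]$ and applying the Cauchy integral formula termwise, since $\f{1}{2\pi i}\int_\gamma \f{f(\lambda)}{\lambda - \lambda_i^\pm}\,d\lambda = f(\lambda_i^\pm)$ as each $\lambda_i^\pm$ lies inside $\gamma$. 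Pairing this operator identity against $y$ with input $\nu$ then yields $\f{1}{2\pi i}\int_\gamma f(\lambda)\left(\int_{\partial D} y\,R_\lambda(\mathcal{K}_{\partial D}^*)[\nu]\,d\sigma\right)d\lambda$, and the inner integral is precisely $M(\lambda,D)$ by definition \eqref{polarizationmatrix}, giving the left-hand side.

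The main obstacle is the analytic bookkeeping of the interchanges of infinite summation with the contour integral and with the continuous functionals, since $\mathcal{K}_{\partial D}^*$ is compact but only self-adjoint on $\mathcal{H}$ (not on $L^2$), and the eigenvalues $\lambda_i \to 0$ accumulate at the origin. One must ensure $\gamma$ encloses \emph{all} $\lambda_i^\pm$ while staying in $U$; since $0 \in \sigma(\mathcal{K}_{\partial D}^*)$ is an accumulation point, $\gamma$ must enclose a neighborhood of $0$ and $f$ must be holomorphic there, so the resolvent expansion converges uniformly on $\gamma$ (the distance from $\gamma$ to the spectrum is bounded below), legitimizing the termwise Cauchy integration and Fubini/Tonelli exchange. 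I would handle this by noting $\sup_{\lambda\in\gamma}\|R_\lambda(\mathcal{K}_{\partial D}^*)\|_{\mathcal{H}} < \infty$ and that $\nu \in \mathcal{H}$, so the tail of the eigenseries is controlled uniformly in $\lambda$ along the compact curve $\gamma$.
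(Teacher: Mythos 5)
Your proposal is correct and takes essentially the same route as the paper, whose proof consists precisely of the Riesz--Dunford identity $\frac{1}{2\pi i}\int_\gamma f(\lambda)\,R_\lambda(\mathcal{K}_{\partial D}^*)\,d\lambda = f(\mathcal{K}_{\partial D}^*)$ combined with the definition \eqref{polarizationmatrix}, all the interchange-of-limits bookkeeping being left implicit. The only slip is your claim that the $\mathcal{H}$-norm and the $L^2$-norm are equivalent --- they are not (equivalence would force $-\mathcal{S}_{\partial D}\ge c\,I$ on $L^2_0(\partial D)$, contradicting the compactness of $\mathcal{S}_{\partial D}$); the termwise pairing is instead legitimate because the functional $\phi\mapsto\int_{\partial D} y\,\phi\,d\sigma$ is already continuous in the $\mathcal{H}$-topology, $y|_{\partial D}$ being smooth.
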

\begin{proof}
By the holomorphic functional calculus, we know for any holomorphic function $f$ on an open domain $U \subset \mathbb{C}$  containing $\sigma(\mathcal{K}_{\partial D}^*)$ and any simple closed curve $\gamma$ in $U$
enclosing $\sigma(\mathcal{K}_{\partial D}^*)$ that
\beqn
\f{1}{2 \pi i} \int_{\gamma} f(\lambda) R_{\lambda}(\mathcal{K}_{\partial D}^*) d \lambda = f (\mathcal{K}_{\partial D}^*)
= \sum_{i=1}^\infty
\bigg[
f (\lambda_i^+) \langle  \phi^+_i,\cdot \rangle_\mathcal{H} \,\phi^+_i + f (\lambda_i^-) \langle  \phi^-_i,\cdot \rangle_\mathcal{H} \,\phi^-_i \bigg] \,.
\label{fK}
\eqn
Combining this with (\ref{polarizationmatrix}), we readily derive that
\beqn
\f{1}{2 \pi i} \int_{\gamma} f(\lambda) M(\lambda,\partial D) d \lambda
=\int_{\partial D} y  f(\mathcal{K}_{\partial D}^*) [\nu](y) d \sigma(y)\,.
\eqn
Now the desired representation comes from the above two identities.
\end{proof}
We note that even if $\partial D$ is only Lipschitz, a similar result can be obtained for the (noncompact) operator $\mathcal{K}^*_{\partial D}$ from the spectral decomposition $\mathcal{K}_{\partial D}^* = \int \lambda d E_{\lambda}$ where $E_{\lambda}$ is the projection-valued measure. However, we will not pursue in this direction
for the sake of simplicity. We refer the reader to \cite{helsing}.

Based on the relation (\ref{Magic formula}), we can make use of
different choices of holomorphic functions $f$ to reconstruct the eigenvalues $\lambda_i$ of $\mathcal{K}^*_{\partial D}$
from its polarization tensor. One of the methods is based on the following observation.
For any $n \in \mathbb{N}$ we define
\beqn
h^{(n)}_1  &: = &\f{1}{2 \pi i} \int_{\gamma} \lambda^{2n} M(\lambda,\partial D) d \lambda \, ,
\label{hn1} \\
h^{(n)}_j  &: = &\f{1}{2 \pi i} \int_{\gamma} \lambda^{2n} M(\lambda,\partial D) d \lambda - \sum_{i = 1}^{j-1} \left( c_i^+ + c_i^- \right) \lambda_i^{2n} \quad \text{for } j >1 \,,
\label{hnj}
\eqn
then we come to the following corollary.
\begin{Corollary}
\label{method1}
Assume that all the eigenvalues of $\mathcal{K}_{\partial D}^*$ are simple.
Then for any $j \in \mathbb{N}$ such that $c_j^+ + c_j^-\ne 0$, it holds that
\beqn
\lim_{n \rightarrow \infty} \f{h^{(n)}_j}{h^{(n-1)}_j} = \lambda_j^2 \quad \text{ and } \quad \lim_{n \rightarrow \infty}  \f{h^{(n)}_j}{\lambda_j^{2n}} = c_i^+ + c_i^-  .
\label{obtainlambdaj}
\eqn
\end{Corollary}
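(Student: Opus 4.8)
The plan is to feed the even monomials $f(\lambda)=\lambda^{2n}$ into the representation formula \eqref{Magic formula} of Lemma \ref{lemmahol} and then isolate the leading eigenvalue by a dominated-tail argument. First I would observe that each $f(\lambda)=\lambda^{2n}$ is entire, hence admissible in Lemma \ref{lemmahol}, and that $f(\lambda_i^+)=\lambda_i^{2n}$ while $f(\lambda_i^-)=(-\lambda_i)^{2n}=\lambda_i^{2n}$ precisely because the exponent is even. This is the role of taking even powers: it merges the two branches of the twin spectrum so that $c_i^+$ and $c_i^-$ acquire the common weight $\lambda_i^{2n}$. Substituting into \eqref{Magic formula} therefore yields $\frac{1}{2\pi i}\int_\gamma \lambda^{2n}M(\lambda,\partial D)\,d\lambda=\sum_{i=1}^\infty (c_i^++c_i^-)\lambda_i^{2n}$. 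Comparing this with the definitions \eqref{hn1}--\eqref{hnj}, the subtracted partial sum in $h_j^{(n)}$ cancels exactly the first $j-1$ terms, so that $h_j^{(n)}=\sum_{i=j}^\infty (c_i^++c_i^-)\lambda_i^{2n}$ for every $j$.

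With this closed form in hand, the next step is to extract the dominant contribution. Since all eigenvalues are assumed simple, the ordering $1/2>\lambda_1>\lambda_2>\cdots\to 0$ is strict, so $\lambda_{j+1}/\lambda_j<1$. Writing $b_i:=c_i^++c_i^-$ and factoring out the leading term, which is legitimate because $b_j\ne 0$ by hypothesis, gives $h_j^{(n)}=b_j\lambda_j^{2n}\big(1+b_j^{-1}\sum_{i>j}b_i(\lambda_i/\lambda_j)^{2n}\big)$. Provided the tail sum tends to $0$, the quantity $h_j^{(n)}/(b_j\lambda_j^{2n})\to 1$, and both claimed limits follow at once: the quotient $h_j^{(n)}/\lambda_j^{2n}\to b_j=c_j^++c_j^-$ (this is the second identity, in which the index $i$ in the statement should read $j$), while the ratio $h_j^{(n)}/h_j^{(n-1)}$ equals $\lambda_j^{2}$ times a factor converging to $1$, hence tends to $\lambda_j^2$.

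The crux, and the step I expect to demand the most care, is controlling the tail $\sum_{i>j}b_i(\lambda_i/\lambda_j)^{2n}$. The clean route is to prove absolute summability $\sum_i|b_i|<\infty$; then the tail is bounded by $(\lambda_{j+1}/\lambda_j)^{2n}\sum_{i>j}|b_i|\to 0$ and everything closes. To obtain $\sum_i|c_i^\pm|<\infty$ I would apply Cauchy--Schwarz to the product defining $c_i^\pm$ and bound each factor by a Bessel inequality in $\mathcal{H}$: since $\{\phi_i^\pm\}$ is a complete orthonormal system in $\mathcal{H}$ and each component $\nu_j$ lies in $\mathcal{H}$ (it has zero mean), one has $\sum_i|\langle \nu_j,\phi_i^\pm\rangle_{\mathcal{H}}|^2\le\|\nu_j\|_{\mathcal{H}}^2<\infty$. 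The delicate factor is $\langle\phi_i^\pm,y\rangle_{L^2(\partial D)}$, because the $\phi_i^\pm$ are orthonormal in $\mathcal{H}$ rather than in $L^2(\partial D)$; here I would pass through the relation $\langle\cdot,\cdot\rangle_{\mathcal{H}}=-\langle\cdot,\mathcal{S}_{\partial D}(\cdot)\rangle_{L^2(\partial D)}$ to rewrite $\langle\phi_i^\pm,y\rangle_{L^2(\partial D)}=-\langle\phi_i^\pm,\mathcal{S}_{\partial D}^{-1}y\rangle_{\mathcal{H}}$ and again invoke Bessel, using the $\mathcal{C}^{1,\alpha}$ regularity of $\partial D$ to justify that $\mathcal{S}_{\partial D}^{-1}y$ lies in the appropriate space (the argument being carried out entrywise in the matrix $M$). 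Should only the weaker weighted bound $\sum_i|b_i|\lambda_i^{2n_0}<\infty$ be available for some fixed $n_0$, the conclusion still holds after the splitting $(\lambda_i/\lambda_j)^{2n}\le(\lambda_{j+1}/\lambda_j)^{2(n-n_0)}(\lambda_i/\lambda_j)^{2n_0}$ valid for $i>j$, and then letting $n\to\infty$.
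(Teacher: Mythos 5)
Your proposal is correct and follows essentially the same route as the paper's proof: take $f(\lambda)=\lambda^{2n}$ in the representation formula of Lemma \ref{lemmahol}, obtain $h_j^{(n)}=\sum_{i\ge j}(c_i^++c_i^-)\lambda_i^{2n}$, and pass to the limit using the strict decay of the simple eigenvalues. The only difference is that you supply the tail-summability justification (via Bessel's inequality) that the paper leaves implicit, and you correctly note the typo $c_i^\pm$ for $c_j^\pm$ in the second limit.
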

\begin{proof}
Taking $f = \lambda^{2n}$ for $n \in \mathbb{N}$ in (\ref{Magic formula}), we have
\beqn
\f{1}{2 \pi i} \int_{\gamma} \lambda^{2n} M(\lambda,\partial D) d \lambda
= \sum_{i=1}^\infty \bigg[ c_i^+ (\lambda_i)^{2n} + c_i^- ( - \lambda_i)^{2n} \bigg]
= \sum_{i=1}^\infty \left( c_i^+ + c_i^- \right) \lambda_i^{2n}  \, .
\label{power}
\eqn
Substituting \eqref{power} into \eqref{hn1}-\eqref{hnj}, we get that for all $j \in \mathbb{N}$,
\beqn
h^{(n)}_j  = \sum_{i = j}^{\infty}\left( c_i^+ + c_i^- \right) \lambda_i^{2n} \, .
\label{hnjexp}
\eqn
Noting that all the eigenvalues of $\mathcal{K}_{\partial D}^*$ are simple and
$c_j^+ + c_j^-\ne 0$, we readily obtain from \eqref{hnjexp} that
\beqn
\lim_{n \rightarrow \infty} \f{h^{(n)}_j}{h^{(n-1)}_j} = \lim_{n \rightarrow \infty} \f{\sum_{i = j}^{\infty} \left( c_i^+ + c_i^- \right) \lambda_i^{2n}  }{\sum_{i = j}^{\infty} \left( c_i^+ + c_i^- \right) \lambda_i^{2(n-1)} } = \lambda_j^2\,,
\eqn
and
\beqn
\lim_{n \rightarrow \infty}  \f{h^{(n)}_j}{\lambda_j^{2n}} = \lim_{n \rightarrow \infty} \sum_{i = j}^{\infty}\left( c_i^+ + c_i^- \right) \left(\f{\lambda_i}{\lambda_j}\right)^{2n} = c_j^+ + c_j^- \, .
\eqn
This gives the conclusion of the corollary.
\end{proof}

With the help of Corollary \ref{method1}, we can propose the following method
to reconstruct the Fredholm eigenvalues from the polarization tensor at multiple contrasts.

\ms
\noindent \textbf{Method 1}.
Given two integers $J, N \in \mathbb{N}$. For $j = 1,2, \cdots,J$ and $n =
1,2,\cdots,N$, compute ${h^{(n)}_j}$ based on \eqref{hn1}-\eqref{hnj}, then
compute the square root of the quotient
\[
\sqrt{ {h^{(n)}_j}/{h^{(n-1)}_j} }
\]
for the approximation of the eigenvalue $\lambda_j$.

\ms
Next, we introduce another reconstruction method.
For a $\sigma_0>0$, $t \in \mathbb{R}$ and
a simple closed curve $\gamma$ enclosing $\sigma(\mathcal{K}_{\partial D}^*)$, we define
\begin{eqnarray}
\Phi_{\sigma_0,\gamma}(t) := \f{1}{2 \pi i} \int_{\gamma} \exp\left(-\f{(\lambda-t)^2}{2 \sigma_0^2}\right) M(\lambda,\partial D) d \lambda \, .
\label{defphi}
\end{eqnarray}
Then by taking a different holomorphic function $f$ in (\ref{Magic formula}),
we have the following useful result from Lemma \ref{lemmahol}.
\begin{Corollary}
\label{method2}
Given a shape $D$ and the corresponding Neumann-Poincar\'e operator $\mathcal{K}_{\partial D}^*$,
the following equality holds
\begin{eqnarray}
\Phi_{\sigma_0,\gamma}(t) =
\sum_{i=1}^\infty \left[ c_i^+ \exp\left(-\f{(\lambda_i-t)^2}{2 \sigma_0^2}\right) + c_i^- \exp\left(-\f{(\lambda_i+t)^2}{2 \sigma_0^2}\right) \right]  \, .
\label{bump}
\end{eqnarray}
\end{Corollary}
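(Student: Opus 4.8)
The plan is to obtain (\ref{bump}) as the single special case of the Magic formula (\ref{Magic formula}) in Lemma \ref{lemmahol} corresponding to the choice of holomorphic function
$$
f(\lambda) = \exp\left(-\f{(\lambda-t)^2}{2\sigma_0^2}\right).
$$
First I would check that this $f$ is admissible in Lemma \ref{lemmahol}. For fixed $t \in \R$ and $\sigma_0 > 0$ the map $\lambda \mapsto (\lambda-t)^2$ is a polynomial, hence entire, and the complex exponential is entire, so $f$ is entire on all of $\mathbb{C}$. In particular it is holomorphic on any open domain $U \subset \mathbb{C}$ containing the (real, bounded) spectrum $\sigma(\mathcal{K}_{\partial D}^*)$, and the curve $\gamma$ in the definition (\ref{defphi}) of $\Phi_{\sigma_0,\gamma}$ is exactly a simple closed curve enclosing $\sigma(\mathcal{K}_{\partial D}^*)$ as required by the lemma. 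Consequently the left-hand side $\f{1}{2\pi i}\int_\gamma f(\lambda) M(\lambda,\partial D)\,d\lambda$ of (\ref{Magic formula}) is, by (\ref{defphi}), literally $\Phi_{\sigma_0,\gamma}(t)$.

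Next I would substitute this $f$ into the right-hand side of (\ref{Magic formula}), evaluating $f$ along the two eigenvalue branches via $\lambda_i^+ = \lambda_i$ and $\lambda_i^- = -\lambda_i$:
$$
f(\lambda_i^+) = \exp\left(-\f{(\lambda_i-t)^2}{2\sigma_0^2}\right), \qquad f(\lambda_i^-) = \exp\left(-\f{(-\lambda_i-t)^2}{2\sigma_0^2}\right) = \exp\left(-\f{(\lambda_i+t)^2}{2\sigma_0^2}\right),
$$
where the last step uses $(-\lambda_i-t)^2 = (\lambda_i+t)^2$. Inserting these into $\sum_{i=1}^\infty \big[ c_i^+ f(\lambda_i^+) + c_i^- f(\lambda_i^-) \big]$ reproduces exactly the series on the right of (\ref{bump}), which establishes the claimed identity.

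I expect essentially no analytic obstacle here: the corollary is a direct specialization of Lemma \ref{lemmahol}, so both the validity of the contour-integral representation and the convergence of the resulting series are already supplied by that lemma applied to the entire function $f$, and no separate convergence analysis is needed. The only points deserving attention are the elementary sign bookkeeping between the two branches $\lambda_i^\pm$ (the identity $(-\lambda_i-t)^2=(\lambda_i+t)^2$ used above) and the observation that the contour $\gamma$ appearing in (\ref{defphi}) is of precisely the type enclosing $\sigma(\mathcal{K}_{\partial D}^*)$ that Lemma \ref{lemmahol} demands, so that the two expressions for $\Phi_{\sigma_0,\gamma}(t)$ genuinely coincide.
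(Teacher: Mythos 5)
Your proposal is correct and follows exactly the paper's own argument: the paper likewise takes $f(\lambda)=\exp(-(\lambda-t)^2/(2\sigma_0^2))$, notes it is holomorphic in $\lambda$, and substitutes it into \eqref{Magic formula}. Your version merely spells out the admissibility check and the sign identity $(-\lambda_i-t)^2=(\lambda_i+t)^2$ that the paper leaves implicit.
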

\begin{proof}
For $\sigma_0>0$ and $t \in \mathbb{R}$,
let $f(\lambda,t) := \exp (-\f{(\lambda-t)^2}{2 \sigma_0^2})$.  As $f(\lambda, t)$ is holomorphic with respect to $\lambda$,
we can substitute it in (\ref{Magic formula}) to get the desired representation.
\end{proof}

\ms
Noting that the function $\exp (-\f{(\lambda_i-t)^2}{2 \sigma_0^2})$
achieves its maximum
at $t=\lambda_i$ and decays exponentially away from $t=\lambda_i$, we
observe from \eqref{bump} that
 the local extrema of the function $\Phi_{\sigma_0,\gamma}(t)$ are
approximately located at the eigenvalues $\lambda_i$ of operator
$\mathcal{K}_{\partial D}^*$.
So we can reconstruct the eigenvalues $\lambda_i$ by evaluating the
local extrema of function $\Phi_{\sigma_0,\gamma}(t)$. This leads us to the following second reconstruction
method.

\ms
\noindent \textbf{Method 2}.
Given a small constant $\sigma_0>0$. Evaluate
function $\Phi_{\sigma_0,\gamma}(t)$ in (\ref{defphi}) for $t \in [-1/2,
1/2]$. Then locate the local extrema of function $\Phi_{\sigma_0,\gamma}(t)$ one by one,
starting from the one with the largest magnitude of $c_i^{\pm}$, then moving to the one
with the second largest magnitude of $c_i^{\pm}$, and so on.

\subsection{Numerical results} \label{numerical12}
In this subsection, we will first present some numerical results on the approximations of the Fredholm eigenvalues
and the decay properties of eigenvalues. Then we shall focus on the inverse problem to reconstruct the Fredholm eigenvalues from the observed PT at multiple contrasts.


For the approximations of the Neumann-Poincar\'e operator and Fredholm eigenvalues, we use a fine mesh of size
$h = 1/1024$ to discretize the integral operator (\ref{operatorK})
by the trapezoidal quadrature rule
over the curve $\partial D$, and compute
the eigenvalues of $\mathcal{K}^*_{\partial D}$.

For a given shape $D$, we plot the decay of eigenvalues and the growth of oscillation.
Let $\lambda_i$ be the $i$-th eigenvalue and $\phi_i^+$ be the corresponding eigenfunction.
Then we define the oscillation of the eigenfunction $\phi_i^+$ by
    \begin{equation}
    a_i := \frac{||\f{\partial \phi_i^+ }{\partial T} ||_{L^2(\p D)}}{||\phi^+_i||_{L^2(\p D)}} \, , \quad i \geq 1.
    \label{oscillation}
    \end{equation}

In Figures \ref{forwardnumerics_ellipse} and \ref{forwardnumerics_heart}, we can see
the detailed changes of $\lambda_i$ against $i$, $\log \lambda_i$ against $i$, and $a_i$
against $i$, from which one can observe the decay of eigenvalues and the growth of oscillation of eigenfunctions,
corresponding to two domains $D$, an ellipse of the form
     \begin{equation}
        \f{x^2}{4} +  y^2 = 1 \, , \quad x, y \in \mathbb{R}\,,
        \label{ellipsehaha}
    \end{equation}
and a heart-shaped domain of the form  (with $\delta = 0.8$ and $m = 1$):
    \begin{equation}
        r = 1+ \delta \sin(m\theta)\, , \quad \theta \in (0,2\pi]\,.
        \label{circle_perturb_form}
    \end{equation}

\begin{figurehere}
 \hfill{}\includegraphics[clip,width=0.44\textwidth]{./ellipse}\hfill{}
 \hfill{}\includegraphics[clip,width=0.44\textwidth]{./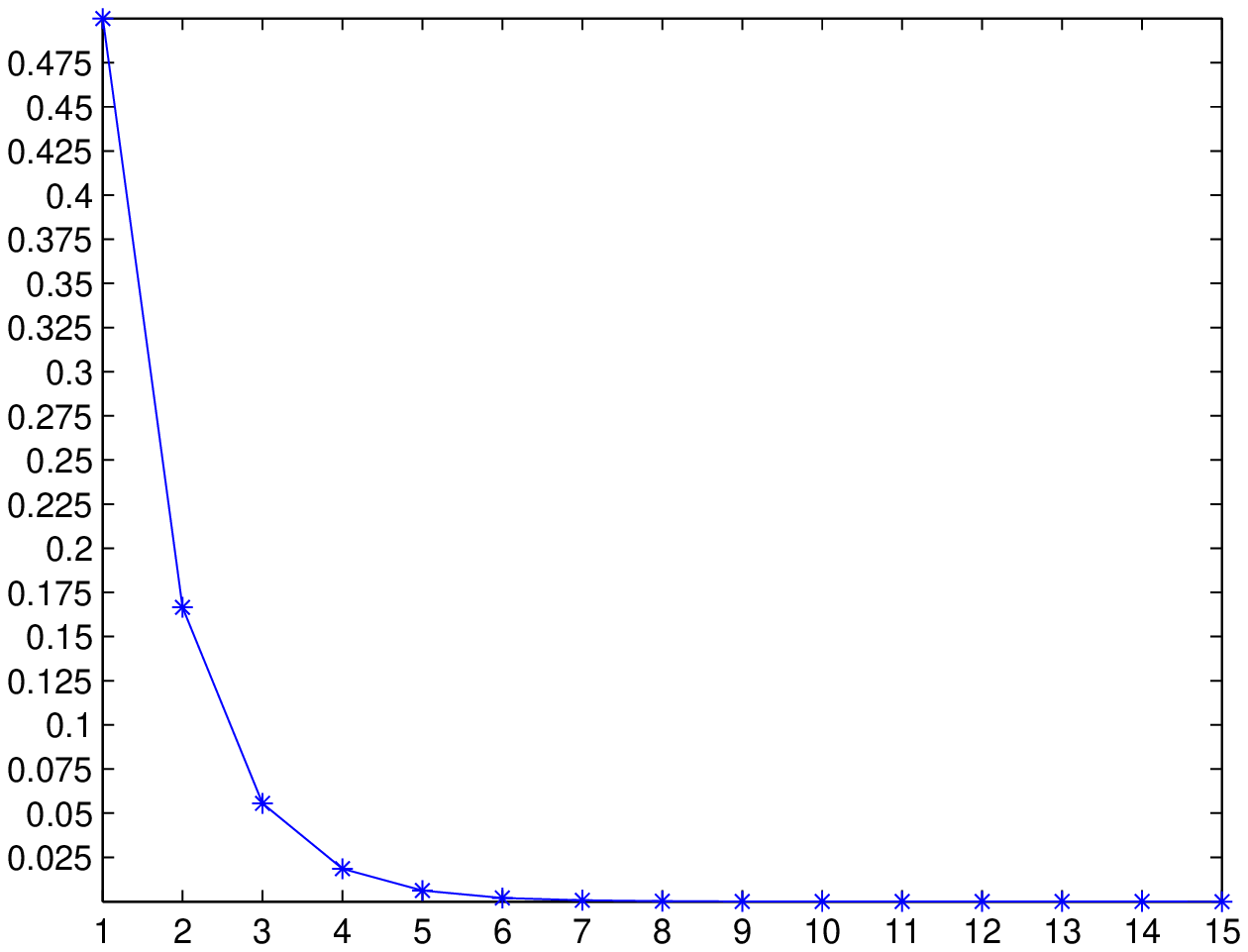}\hfill{}

 \hfill{}(a)\hfill{} \hfill{}(b)\hfill{}

\hfill{}\includegraphics[clip,width=0.44\textwidth]{./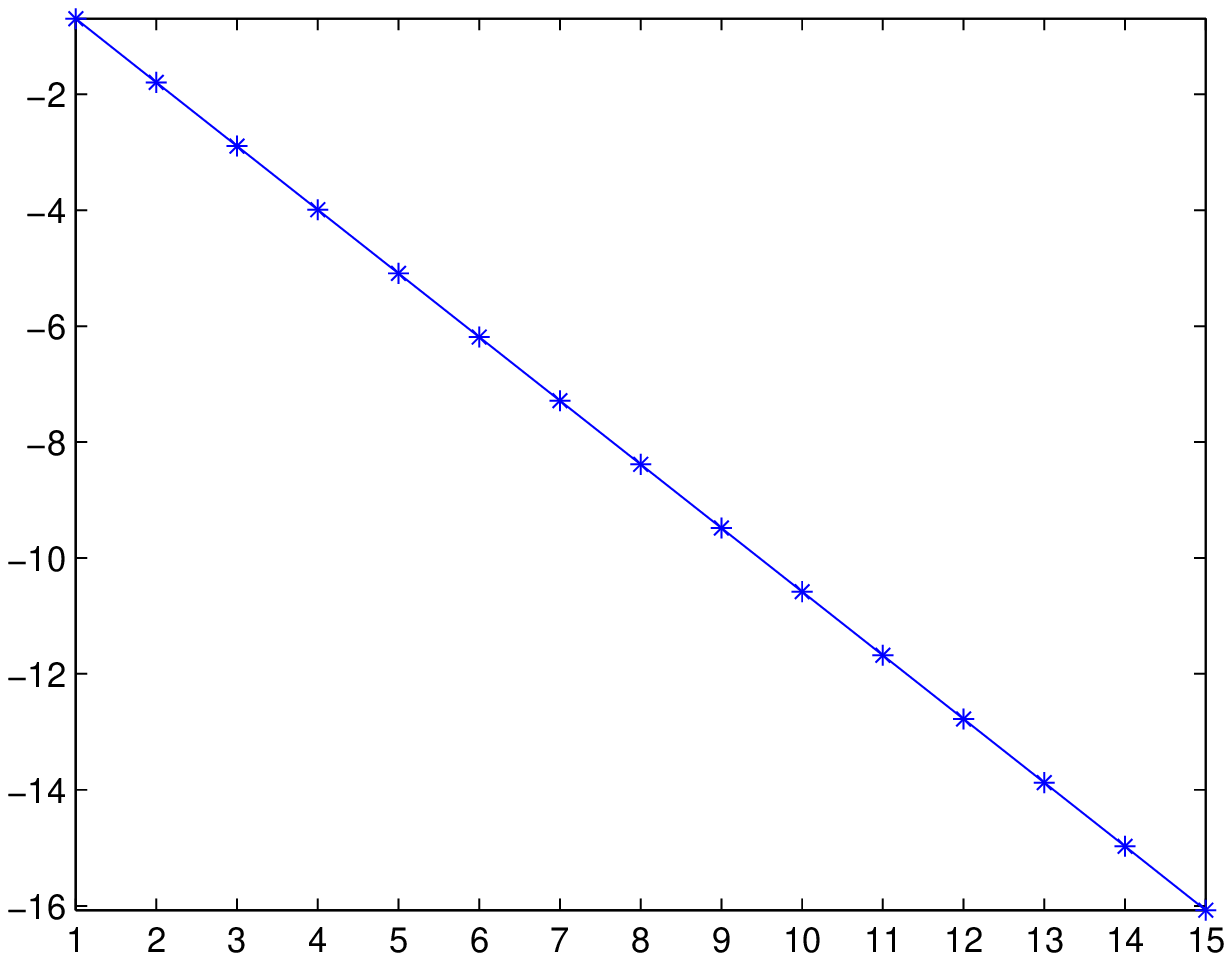}\hfill{}
 \hfill{}\includegraphics[clip,width=0.44\textwidth]{./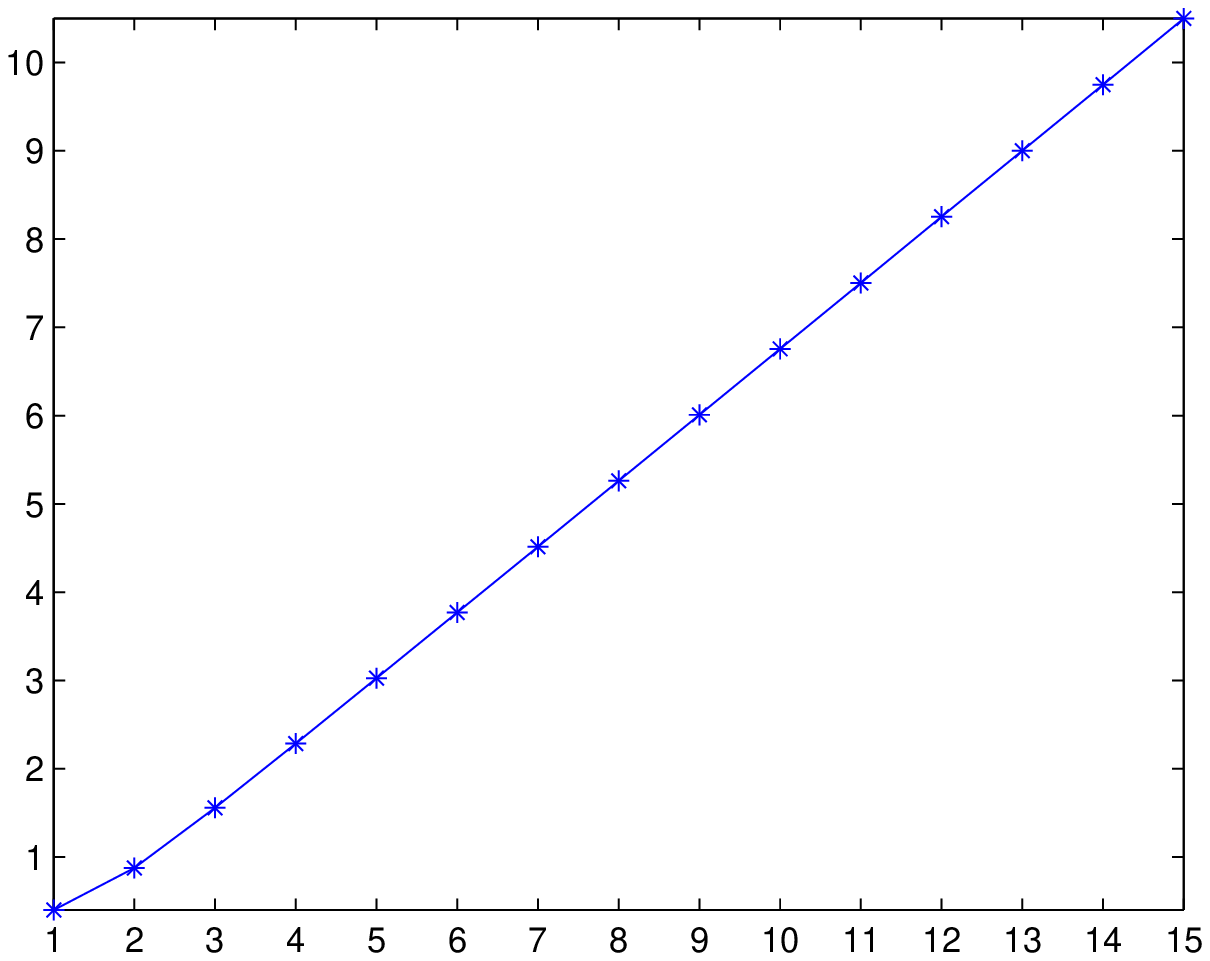}\hfill{}

 \hfill{}(c)\hfill{} \hfill{}(d)\hfill{}
  \vskip -0.15truecm
\caption{\small (a) Domain $D$; (b) $\lambda_i$ against $i$; (c) $\log \lambda_i$ against $i$; (d) $a_i$ against $i$.
     }\label{forwardnumerics_ellipse}
 \end{figurehere}

 \begin{figurehere}
 \hfill{}\includegraphics[clip,width=0.44\textwidth]{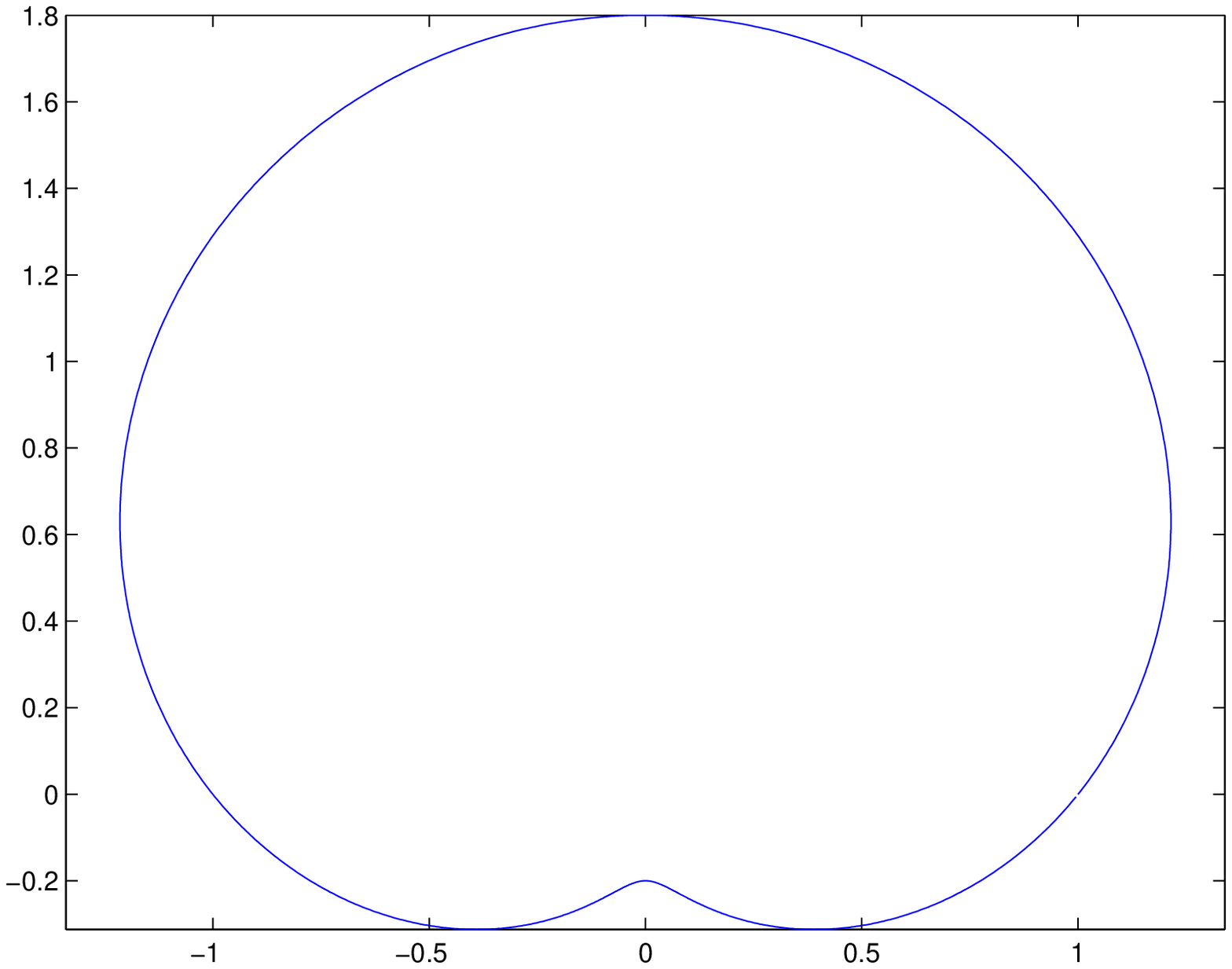}\hfill{}
 \hfill{}\includegraphics[clip,width=0.44\textwidth]{./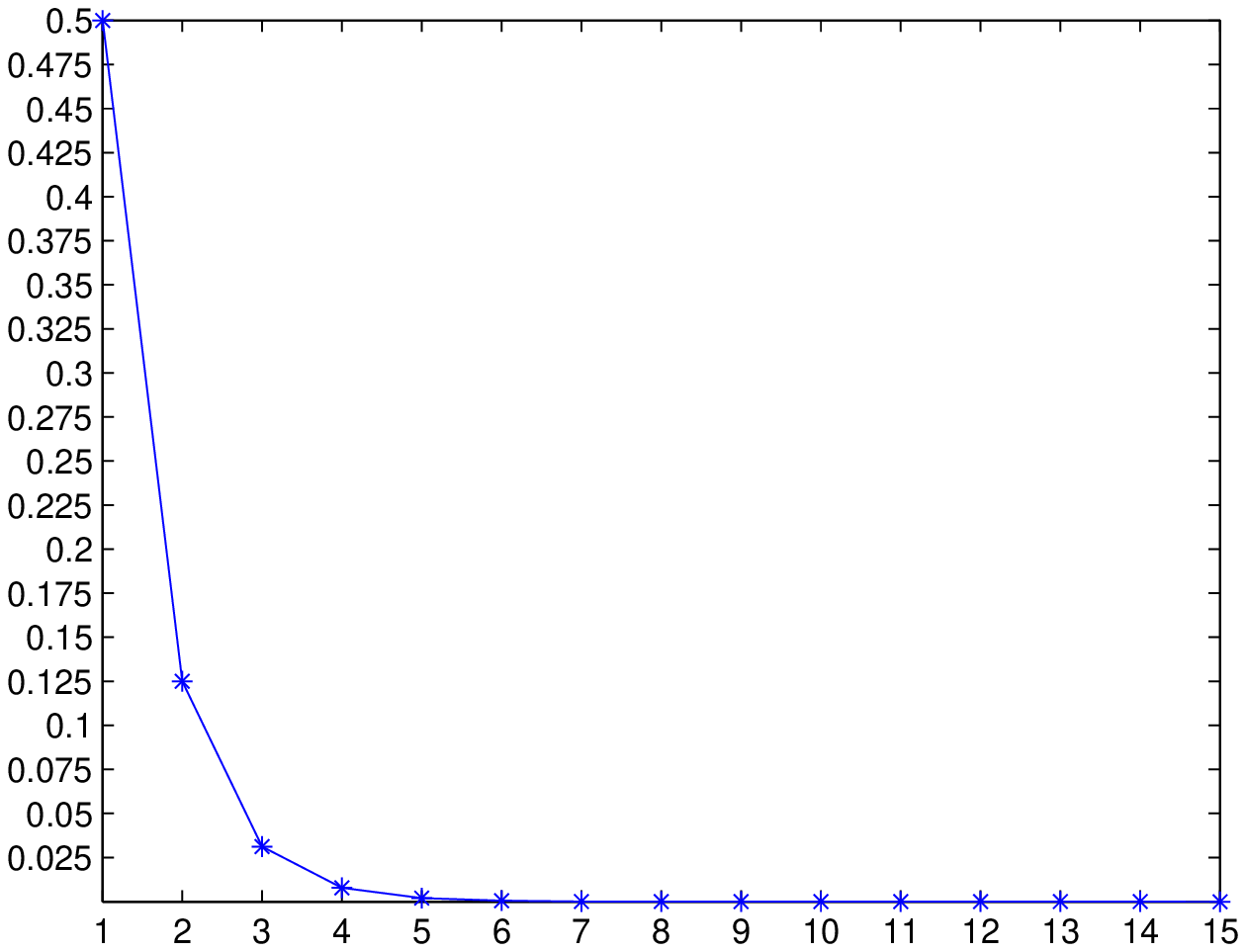}\hfill{}

 \hfill{}(a)\hfill{} \hfill{}(b)\hfill{}

\hfill{}\includegraphics[clip,width=0.44\textwidth]{./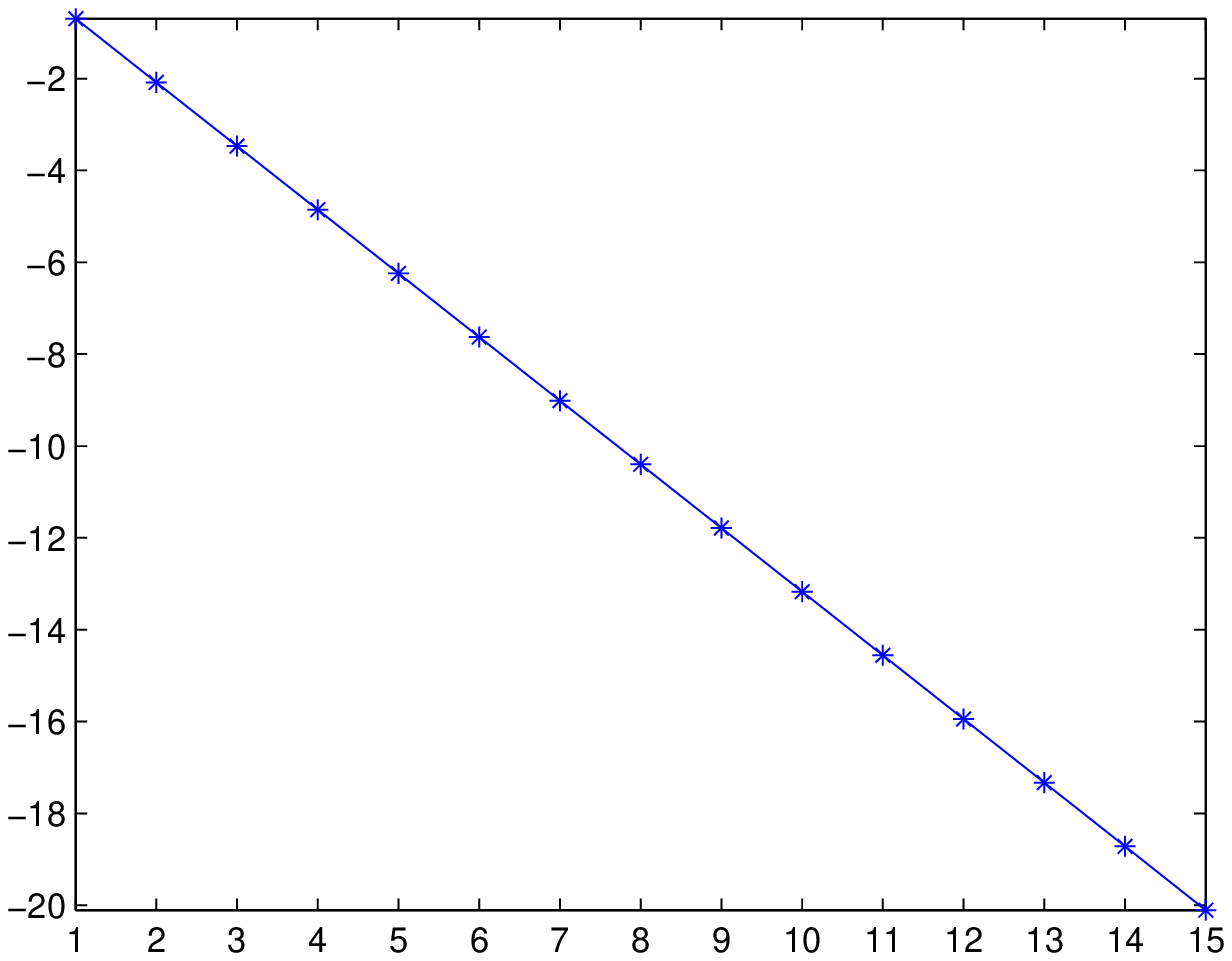}\hfill{}
 \hfill{}\includegraphics[clip,width=0.44\textwidth]{./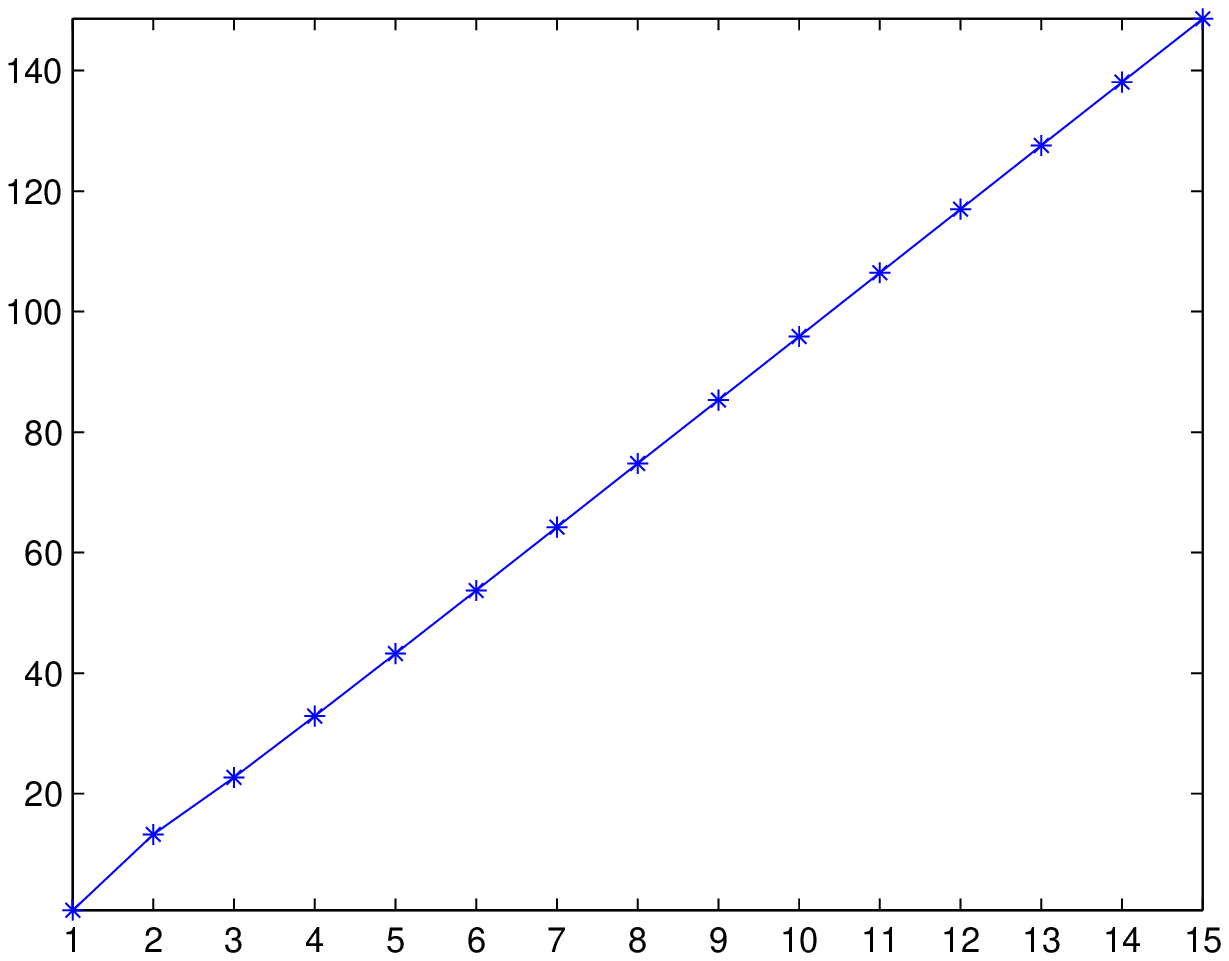}\hfill{}

 \hfill{}(c)\hfill{} \hfill{}(d)\hfill{}
  \vskip -0.15truecm
\caption{\small (a) Domain $D$; (b) $\lambda_i$ against $i$; (c) $\log \lambda_i$ against $i$; (d) $a_i$ against $i$.
     }\label{forwardnumerics_heart}
 \end{figurehere}

\ms
Next, we carry out some numerical examples for the reconstructions of Fredholm eigenvalues from PT
at multiple contrasts. The forward data is obtained by first approximating the Neumann-Poincar\'e operator as it was
done earlier in this subsection, then the PT, $M(\lambda, D)=(m_{ij}(\lambda,D))_{i,j=1}^d$,
is calculated based on (\ref{polarizationmatrix}) using the trapezoidal rule over a fine mesh of size $h = 1/1024$
on $\partial D$. Values of $M(\lambda, D)$ are obtained for $\lambda \in \mathbb{C}$ on the grid points of a uniform mesh of size $1 /100$
over the curve $\gamma$:
\beqn
\gamma = \{ 0.23\, e^{2 \pi i \theta} + 0.3 \,  \big| \,  0\le \theta\le 1\, \} \, , \l{eq:curve}
\eqn
and are regarded as the observed data for the reconstructions of the Fredholm eigenvalues.  For the numerical comparisons,
we have implemented both Methods 1 and 2 in Section \ref{func}.
We notice that, in Method 1, quadrature rules with accuracy of very higher orders
are necessary for the approximation of the contour integration
in order to accurately approximate $h^{(n)}_1$ in (\ref{hn1}) for large $n \in \mathbb{N}$,
which is the case for an accurate estimate of eigenvalues.
Hence Method 1 may be rather expensive, and we shall demonstrate
only the reconstructions by Method 2 below.


By considering only those eigenvalues lying inside $\gamma$ (which are all positive), we compute (\ref{defphi}) in
our implementations of Method 2 as follows:
{\small
\beqn
\Phi_{\sigma_0,\gamma}(t) := \f{1}{2 \pi i} \int_{\gamma} \exp\left(-\f{(\lambda-t)^2}{2 \sigma_0^2}\right) M(\lambda,\partial D) d \lambda
= \sum_{ 0.07 < \lambda_i < 0.53 } c_i^+ \exp\left(-\f{(\lambda_i-t)^2}{2 \sigma_0^2}\right) \,
\label{bump2}
\eqn
}with
$\sigma_0 = 0.05$.
Then we can locate the local extrema of function (\ref{bump2}) one by one, starting from the one with the largest
magnitude of $c_i^+$, then to the one with the second largest magnitude of $c_i^+$, and so on.
This process provides us with a set of approximate eigenvalues from the knowledge
of polarization tensor $M(\lambda,\partial D)$ over $\gamma$.
The exact eigenvalues and the approximate ones
obtained from the above described Method 2 are listed in
Table \ref{tab:tablekite} for the kite-shaped domain $D$ of the form
\beqn
  x = \cos \theta +0.65 \cos 2\theta -0.65\, , \quad  y = 1.5 \sin \theta  \, , \quad \theta \in (0,2\pi]\, ,
  \label{kiteformula}
\eqn
a pear-shaped domain in Table  \ref{tab:tablepear}
and a floriform domain with 3 petals in Table \ref{tab:table3floriform}. Here
the latter two domains are of the form (\ref{circle_perturb_form})
with the same parameter $m = 3$ but a different $\delta$, i.e., $\delta = 0.3$ and $0.6$.

     \begin{tablehere}
     \begin{center}
     \begin{tabular}{ | c | c | c | c |}
     \hline
     Eigenvalues & Exact solutions & Approximate solutions\\
     \hline
     First   &  0.5000   & 0.5000 \\
     \hline
     Second    &  0.2707    & 0.2700  \\
     \hline
     Third    &  0.1902    & 0.1800  \\
     \hline
     Fourth   &  0.0891    & 0.0900 \\
     \hline
     Fifth    &  0.0718    & 0.0700  \\
     \hline
     \end{tabular}
     \end{center}
     \vskip -0.3truecm
     \caption{\label{tab:tablekite}\emph{The first 5 reconstructed eigenvalues for the kite-shaped domain.}}
     \end{tablehere}

    \begin{tablehere}
     \begin{center}
     \begin{tabular}{ | c | c | c | c |}
     \hline
     Eigenvalues & Exact solutions & Approximate solutions\\
     \hline
     First   &  0.5000   & 0.5000 \\
     \hline
     Second    &  0.1035    & 0.1050  \\
     \hline
     Third    &  0.1035    & 0.1050  \\
     \hline
     \end{tabular}
     \end{center}
      \vskip -0.3truecm
     \caption{\label{tab:tablepear}\emph{The first 3 reconstructed eigenvalues for the pear-shaped domain.}}
     \end{tablehere}

     \begin{tablehere}
     \begin{center}
     \begin{tabular}{ | c | c | c | c |}
     \hline
     Eigenvalues & Exact solutions & Approximate solutions\\
     \hline
     First   &  0.5000   & 0.5000 \\
     \hline
     Second    &  0.3322       & 0.3300  \\
     \hline
     Third    &  0.3322    & 0.3300  \\
     \hline
     Fourth    &  0.1404    & 0.1300  \\
     \hline
      fifth    &  0.1404    & 0.1300  \\
     \hline
     \end{tabular}
     \end{center}
      \vskip -0.3truecm
     \caption{\label{tab:table3floriform}\emph{The first 5 reconstructed eigenvalues for the floriform domain.}}
     \end{tablehere}

As we can observe from Tables \ref{tab:tablekite},  \ref{tab:tablepear}
and \ref{tab:table3floriform}, the reconstructed eigenvalues are rather satisfactory and accurate
in view of the severe ill-posedness of recovering eigenvalues.

\section{Hadamard's formula for the Fredholm eigenvalues} \label{sec4}
In this section, we turn our attention to the optimal shape design problem given the Fredholm eigenvalues corresponding
to a geometric shape.
Our tactic to approach the problem is via an optimization of a least-squares functional.
For this purpose,
we first discuss how to obtain the shape derivatives of the Neumann-Poincar\'e operator
and the Fredholm eigenvalues. These derivatives
are needed when we compute the gradient of least-squares functional concerned.

To start with, we first focus on the shape derivative of the Neumann-Poincar\'e operator, which was derived in \cite{zribi}.
We need some new notations for the description of derivative.
Given a shape $D$ and $a,b \in \mathbb{R}$ with $a < b$, we consider an arc-length parametrization of $\partial D$,
$
X(t): [a,b] \rightarrow \partial D\,.
$
Let $T(x)$ and $\nu(x)$ be respectively the tangent vector and the outward unit normal
to $\partial D$ at $x\in \p D$, and $\tau(x)$ be  the curvature defined by
\beqn
X''(t) = \tau(x) \nu (x) \, .
\eqn
Now consider an $\varepsilon$-perturbation of $D$, namely
$\partial D_\varepsilon$ is given by
\beqn
    \partial D_\varepsilon :=\{\widetilde{x} \, \big| \, \widetilde{x} = x+ \varepsilon h(x) \nu(x) \, , \, x \in \partial D \}\,,
    \label{variationvariation}
\eqn
where $h \in \mathcal{C}^1(\partial D)$.
For two arbitrary points $x,y \in \partial D$ such that $x = X(t), y = X(s)$ for some $t, s \in [a,b]$,
we define
\beqn
F_h(x,y) = \f{\langle  x-y,h(t)x(x) - h(s)\nu(y) \rangle  }{|x-y|^2} \; \text{ and }  \; G_h(x,y) = \f{|h(t)x(x) - h(s)\nu(y)|^2}{|x-y|^2} \, ,
\label{workint0}
\eqn
and $F_{h,n}$ as the coefficients in the following series
\beqn
    \sum_{n=0}^\infty \varepsilon^n F_{h,n}(x,y) := \f{1}{1 + 2 \varepsilon F_h(x,y) + \varepsilon^2 G_h(x,y)} \f{\sqrt{(1 -\varepsilon \tau(y)h(s))^2 + \varepsilon^2 (h'(s))^2}}{\sqrt{(1 -\varepsilon \tau(x)h(t))^2 + \varepsilon^2 (h'(t))^2}}\,,
    \label{seriesint1}
\eqn
where the series converges absolutely and uniformly \cite{zribi}.
We can directly see that
\beqn
F_{h,0}(x,y) = 0 \, \quad \text{ and }  \quad F_{h,1}(x,y) = -2 F(x,y) + \tau(x)h(x)-\tau(y)h(y) \, .
\eqn
For any two points $\widetilde{x}, \widetilde{y} \in \partial D_\varepsilon$
such that $\widetilde{x} := x + \varepsilon h(t) \nu(x)$ and $\widetilde{y} := y + \varepsilon h(s) \nu(y)$,
we write $\mathbb{K}_{h,n}$ as the coefficients in the following series
\beqn
\sum_{n=0}^\infty \varepsilon^n \mathbb{K}_{h,n}(x,y) d \sigma(y): = \f{\langle \widetilde{x}-\widetilde{y}, \widetilde{\nu}(\widetilde{x}) \rangle  }{|\widetilde{x}-\widetilde{y}|^2} d \sigma_\varepsilon(\widetilde{y}) \, ,
\label{workint}
\eqn
where $\widetilde{\nu}(\widetilde{x})$ denotes the outward unit normal to $\partial D_\varepsilon$ at $\widetilde{x}$, while $d \sigma(y)$ and $d \sigma_\varepsilon(\widetilde{y})$ are the length elements on $\partial D$ at $y$ and on $\partial D_\varepsilon$ at $\widetilde{y}$ respectively.  Then, following the argument in \cite{zribi} and using
(\ref{workint0})-(\ref{seriesint1}) and (\ref{workint}) we have
\beqnx
    \mathbb{K}_{h,0} = \f{\langle x-y, \nu(x) \rangle  }{|x-y|^2}, ~~\mathbb{K}_{h,1} = K_{h,0} F_{h,1} + K_{h,1}, ~~\mathbb{K}_{h,n} = F_{h,n} K_{h,0} + F_{h,n-1} K_{h,1} + F_{h,n-2} K_{h,2}\,,
\eqnx
for $n \geq 2$, where $K_{h,0}$, $K_{h,1}$ and $K_{h,2}$ are given by
\begin{eqnarray*}
 K_{h,0} &= &\f{\langle x-y, \nu(x) \rangle  }{|x-y|^2},\\
 K_{h,1} &=& \f{\langle h(t)\nu(x)-h(s)\nu(y), \nu(x) \rangle  }{|x-y|^2}- \f{\langle x-y ,\tau(x) h(t)\nu(x)+h'(t)T(x) \rangle  }{|x-y|^2},\\
 K_{h,2} &=& \f{\langle h(t)\nu(x)-h(s)\nu(y), \tau(x) h(t)\nu(x)-h'(t)T(x) \rangle  }{|x-y|^2}.
\end{eqnarray*}
Define a sequence of integral operators $\mathcal{K}^{(n)}_{D,h}$: $L^2 (\partial D) \to L^2 (\partial D)$ by
\beqn
   \mathcal{K}^{(n)}_{D,h} \phi(x) := \int_{\partial D} \mathbb{K}_{h,n}(x,y)\phi(y)d \sigma(y) ~~\forall\, \phi \in L^2 (\partial D)\,
   \label{kernalkernal}
\eqn
for $n \geq 0$. Let $\Psi_\varepsilon$ to be the diffeomorphism from $\partial D$ to $\partial D_{\varepsilon}$ given by $\Psi_\varepsilon(x) = x + \varepsilon h(t)\nu(x)$, then we have the following result from \cite{zribi}.
\begin{Lemma}
For $N \in \mathbb{N}$, there exists constant $C$ depending only on $N$, $||X||_{\mathcal{C}^2}$ and $||h||_{\mathcal{C}^1}$ such that
the following estimate holds for any $\widetilde{\phi} \in L^2 (\partial D_\varepsilon)$ and
$\phi := \tilde{\phi}\circ \Psi_\varepsilon$,
\beqn
   \bigg|\bigg| \mathcal{K}^*_{\partial D_\varepsilon}[ \tilde{\phi} ] \circ \Psi_\varepsilon - \mathcal{K}^*_{\partial D}[{\phi} ] - \sum_{n=1}^N \varepsilon^n  \mathcal{K}^{(n)}_{D,h} [\phi] \bigg|\bigg|_{L^2(\partial D)}
   \leq C \varepsilon^{N+1} ||\phi||_{L^2(\partial D)}\,.
   \label{seriesvariation}
\eqn
\end{Lemma}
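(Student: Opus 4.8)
The plan is to reduce this operator estimate to a pointwise expansion of the integral kernel, followed by a uniform operator-norm control of the truncation error. First I would transport the integral defining $\mathcal{K}^*_{\partial D_\varepsilon}[\tilde{\phi}]$ on $\partial D_\varepsilon$ back to $\partial D$ through the diffeomorphism $\Psi_\varepsilon$. Writing $\tilde{x}=\Psi_\varepsilon(x)$, $\tilde{y}=\Psi_\varepsilon(y)$ and $\phi=\tilde{\phi}\circ\Psi_\varepsilon$, the change of variables $\tilde{y}=\Psi_\varepsilon(y)$ turns
\[
\mathcal{K}^*_{\partial D_\varepsilon}[\tilde{\phi}](\tilde{x})=\int_{\partial D_\varepsilon}\f{\langle \tilde{x}-\tilde{y},\tilde{\nu}(\tilde{x})\rangle}{|\tilde{x}-\tilde{y}|^2}\,\tilde{\phi}(\tilde{y})\,d\sigma_\varepsilon(\tilde{y})
\]
into an integral over $\partial D$ against $\phi(y)\,d\sigma(y)$, so that the only $\varepsilon$-dependent objects are the perturbed kernel and the surface-measure ratio $d\sigma_\varepsilon(\tilde{y})/d\sigma(y)$. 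The whole statement then amounts to showing that this pulled-back kernel admits the expansion $\sum_{n\ge0}\varepsilon^n\mathbb{K}_{h,n}(x,y)\,d\sigma(y)$ of (\ref{workint}), with a tail that is small in operator norm.

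Next I would extract the coefficients. Expanding $\tilde{x}-\tilde{y}=(x-y)+\varepsilon(h(t)\nu(x)-h(s)\nu(y))$ gives $|\tilde{x}-\tilde{y}|^2=|x-y|^2(1+2\varepsilon F_h+\varepsilon^2 G_h)$ with $F_h,G_h$ as in (\ref{workint0}); differentiating $\Psi_\varepsilon\circ X$ and using the Frenet relations $X''=\tau\nu$ and $\nu'=-\tau T$ yields the perturbed (unnormalized) tangent $(1-\varepsilon\tau h)T+\varepsilon h'\nu$ at each point, whence both the length $\sqrt{(1-\varepsilon\tau h)^2+\varepsilon^2 (h')^2}$ normalizing $\tilde{\nu}(\tilde{x})$ and the Jacobian $d\sigma_\varepsilon(\tilde{y})/d\sigma(y)=\sqrt{(1-\varepsilon\tau(y)h(s))^2+\varepsilon^2 h'(s)^2}$. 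Collecting the reciprocal denominator together with the Jacobian and the normalization of $\tilde{\nu}$ reproduces exactly the generating function (\ref{seriesint1}), whose Taylor coefficients are the $F_{h,n}$, while pairing $\tilde{x}-\tilde{y}$ with the unnormalized $\tilde{\nu}(\tilde{x})$ and dividing by $|x-y|^2$ produces a quadratic polynomial in $\varepsilon$ with coefficients $K_{h,0},K_{h,1},K_{h,2}$. Taking the Cauchy product of these two factors and matching powers of $\varepsilon$ delivers precisely $\mathbb{K}_{h,0}=K_{h,0}$, $\mathbb{K}_{h,1}=K_{h,0}F_{h,1}+K_{h,1}$, and $\mathbb{K}_{h,n}=F_{h,n}K_{h,0}+F_{h,n-1}K_{h,1}+F_{h,n-2}K_{h,2}$ for $n\ge2$.

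For the quantitative control I would first record that $\mathcal{C}^2$ regularity makes $F_h$ and $G_h$ bounded on $\partial D\times\partial D$ (using $\langle x-y,\nu(x)\rangle=O(|x-y|^2)$ and $h(t)\nu(x)-h(s)\nu(y)=O(|x-y|)$), so $1+2\varepsilon F_h+\varepsilon^2 G_h$ stays bounded away from $0$ once $\varepsilon$ is small. Consequently the generating function (\ref{seriesint1}) is holomorphic in $\varepsilon$ on a fixed disc, with radius controlled by $\|X\|_{\mathcal{C}^2}$ and $\|h\|_{\mathcal{C}^1}$, uniformly in $(x,y)$; Cauchy estimates then give $\|F_{h,n}\|_{\infty}\le C\rho^n$ for some $\rho$ independent of $n$. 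The remainder operator $R_N:=\mathcal{K}^*_{\partial D_\varepsilon}[\cdot]\circ\Psi_\varepsilon-\mathcal{K}^*_{\partial D}-\sum_{n=1}^N\varepsilon^n\mathcal{K}^{(n)}_{D,h}$ has kernel $\sum_{n>N}\varepsilon^n\mathbb{K}_{h,n}$, and once each $\mathcal{K}^{(n)}_{D,h}$ is known to be bounded on $L^2(\partial D)$ with norm $\le C\rho^n$, summing the geometric tail $\sum_{n>N}\varepsilon^n C\rho^n\le C\varepsilon^{N+1}$ closes the estimate.

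I expect the main obstacle to be exactly this last step: promoting the pointwise kernel bound to a uniform $L^2\to L^2$ bound near the diagonal. While $\mathbb{K}_{h,0}$ is bounded, the coefficients $K_{h,1}$ and $K_{h,2}$ individually carry terms of size $|x-y|^{-1}$; one must check that the genuinely non-integrable part is odd in the arc-length difference $t-s$, so that it assembles into Cauchy principal-value singular integrals, which are bounded on $L^2(\partial D)$ for a $\mathcal{C}^{1,\alpha}$ (in particular $\mathcal{C}^2$) boundary. The delicate points are to keep the bounding constants uniform in $n$ and to respect the stated regularity: the contributions that naively look like second derivatives of $h$ have to be removed by integrating by parts in the tangential variable, moving the derivative off $h$, so that only $\|h\|_{\mathcal{C}^1}$ enters the final constant. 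This is the analysis carried out in \cite{zribi}, on which the present lemma is based.
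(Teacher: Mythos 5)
The paper itself offers no proof of this lemma --- it is imported wholesale from \cite{zribi} --- and your sketch correctly reconstructs the argument of that reference: pull the integral back through $\Psi_\varepsilon$, identify the generating function \eqref{seriesint1} from the expansion of $|\tilde{x}-\tilde{y}|^2$, of the surface measure, and of the normalization of $\tilde{\nu}$, read off the $\mathbb{K}_{h,n}$ as a Cauchy product with the quadratic factor $K_{h,0}+\varepsilon K_{h,1}+\varepsilon^2 K_{h,2}$, and sum the geometric tail using uniform bounds on the coefficients. Your diagnosis of the near-diagonal difficulty is also essentially right, with one small correction: the two $h'(t)/(t-s)$-type singularities inside $K_{h,1}$ (one coming from $\langle h(t)\nu(x)-h(s)\nu(y),\nu(x)\rangle$, the other from $-h'(t)\langle x-y,T(x)\rangle$) cancel each other at leading order, so no tangential integration by parts is needed and $\|h\|_{\mathcal{C}^1}$ enters the constant exactly as you anticipate.
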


From (\ref{seriesvariation}) we know the shape derivative of the Neumann-Poincar\'e operator at the variation $h$ and $D$\,:
\beqn
[\mathcal{D} (\mathcal{K}_{\partial D}^*) ](h) =  \mathcal{K}^{(1)}_{D,h} \,.
\label{D1}
\eqn

Next we turn our attention to the shape derivatives of the Fredholm eigenvalues.
By the Osborn's theorem \cite{osborn}, we have
\beqn
    | \lambda_i^{\pm}(D) - \lambda_i^{\pm} ({D_\varepsilon}) - \langle (\mathcal{K}^*_{\partial D} -  \mathcal{K}^*_{\partial D_\varepsilon} \circ \Psi_\varepsilon ) \phi_i^{\pm}(D), \phi_i^{\pm}(D)  \rangle   | \leq C ||\mathcal{K}^*_{\partial D} -  \mathcal{K}^*_{\partial D_\varepsilon} \circ \Psi_\varepsilon ||^2 \, ,
    \label{osborn}
\eqn
using the facts that $\mathcal{K}^*_{\partial D_\varepsilon}$ is collectively compact, i.e., $\{\mathcal{K}^*_{\partial D_\varepsilon}[\phi] : ||\phi||_{\mathcal{H}} \leq 1, \varepsilon \geq 0 \}$ is sequentially compact, and that $\mathcal{K}^*_{\partial D_\varepsilon} \rightarrow \mathcal{K}^*_{\partial D}$ pointwise.
Here $\phi_i^{\pm}(D)$ are the orthonormal eigenfunctions of the operator $ \mathcal{K}^*_{\partial D}$\,.
Now we can easily see from
(\ref{seriesvariation}) and (\ref{osborn})
the following estimates for the variation of eigenvalues:
\beqn
    | \lambda_i^{\pm}(D) - \lambda_i^{\pm} ({D_\varepsilon}) - \varepsilon \langle \mathcal{K}^{(1)}_{D,h}  \phi_i^{\pm}, \phi_i^{\pm}  \rangle   | \leq C \, \varepsilon^2.
\eqn
This yields the following result.
\begin{Proposition}
Let $\lambda_i^{\pm}$ be simple Fredholm eigenvalues, then their shape derivatives are given by
\beqn
[\mathcal{D} (\lambda_i^{\pm}) ]\big|_{D}(h) =  \langle \mathcal{K}^{(1)}_{D,h}  \phi_i^{\pm}, \phi_i^{\pm} \rangle   \,.
\label{D2}
\eqn
\end{Proposition}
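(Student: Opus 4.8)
The plan is to read off $[\mathcal{D}(\lambda_i^\pm)]\big|_D(h)$ as the coefficient of $\varepsilon$ in the first-order expansion of the perturbed eigenvalue $\lambda_i^\pm(D_\varepsilon)$, combining the two ingredients already assembled: the first-order expansion (\ref{seriesvariation}) of the Neumann-Poincar\'e operator along the normal perturbation $\Psi_\varepsilon$, which by (\ref{D1}) identifies $\mathcal{K}^{(1)}_{D,h}$ with the operator shape derivative $[\mathcal{D}(\mathcal{K}_{\partial D}^*)](h)$, and Osborn's eigenvalue perturbation estimate (\ref{osborn}). Because $\lambda_i^\pm$ is assumed simple, the spectral projection onto its eigenspace is rank one, so Osborn's formula reduces to a single quadratic form against the normalized eigenfunction $\phi_i^\pm$, which is exactly the clean form appearing in the statement.

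Concretely, I would take $N=1$ in (\ref{seriesvariation}) to obtain $\mathcal{K}^*_{\partial D_\varepsilon}\circ\Psi_\varepsilon - \mathcal{K}^*_{\partial D} = \varepsilon\,\mathcal{K}^{(1)}_{D,h} + O(\varepsilon^2)$ in the operator norm on $\mathcal{H}$; in particular the operator difference is $O(\varepsilon)$, so the quadratic remainder $\|\mathcal{K}^*_{\partial D} - \mathcal{K}^*_{\partial D_\varepsilon}\circ\Psi_\varepsilon\|^2$ controlling the error in (\ref{osborn}) is only $O(\varepsilon^2)$. Pairing this operator identity against $\phi_i^\pm$ and substituting into (\ref{osborn}) then collapses the leading term into a multiple of $\varepsilon\langle\mathcal{K}^{(1)}_{D,h}\phi_i^\pm,\phi_i^\pm\rangle$ up to an $O(\varepsilon^2)$ error, yielding the linear estimate for $\lambda_i^\pm(D_\varepsilon)-\lambda_i^\pm(D)$ recorded just above the statement. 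Dividing by $\varepsilon$ and letting $\varepsilon\to 0$ produces the directional derivative claimed. I would keep careful track of the orientation conventions in $\Psi_\varepsilon$ and in the definition of the shape derivative, since the eigenvalue difference and the operator difference enter (\ref{osborn}) with opposite signs, and it is this bookkeeping that fixes the final sign in the formula.

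The substantive point, rather than the final limit, is the legitimacy of invoking Osborn's theorem uniformly in $\varepsilon$. This rests on the collective compactness of the family $\{\mathcal{K}^*_{\partial D_\varepsilon}\}_{\varepsilon\ge 0}$ together with the pointwise convergence $\mathcal{K}^*_{\partial D_\varepsilon}\circ\Psi_\varepsilon\to\mathcal{K}^*_{\partial D}$, which together guarantee that $\lambda_i^\pm(D_\varepsilon)$ stays isolated and converges to $\lambda_i^\pm(D)$ and that the rank-one eigenprojection varies continuously; these are precisely the hypotheses of (\ref{osborn}) and are available from the $\mathcal{C}^{1,\alpha}$ regularity of $\partial D$ as quoted from \cite{osborn,zribi}. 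I would also verify that simplicity persists for small $\varepsilon$, so that the branch $\lambda_i^\pm(D_\varepsilon)$ is unambiguously defined, and that the $\mathcal{H}$-normalization of $\phi_i^\pm$ is consistent with the pairing used in Osborn's estimate. Once these are in place, the Proposition follows with no further analytic input.
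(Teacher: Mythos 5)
Your proposal is correct and follows essentially the same route as the paper: combine Osborn's estimate (\ref{osborn}) with the $N=1$ truncation of the operator expansion (\ref{seriesvariation}) to get $|\lambda_i^{\pm}(D)-\lambda_i^{\pm}(D_\varepsilon)-\varepsilon\langle\mathcal{K}^{(1)}_{D,h}\phi_i^{\pm},\phi_i^{\pm}\rangle|\le C\varepsilon^2$, then divide by $\varepsilon$ and pass to the limit. Your additional remarks on collective compactness, pointwise convergence, and persistence of simplicity are exactly the justifications the paper invokes for applying Osborn's theorem.
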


It is worth mentioning that if $\lambda_i$ is a multiple eigenvalue, it may evolve, under perturbations, into
several separated and distinct eigenvalues.
The  splitting of eigenvalues may only become apparent at high orders in their Taylor expansions
with respect to the perturbation parameter.
The splitting  problem in the evaluation of perturbations in $\lambda_i$ can be addressed
using the arguments in \cite[Section 3.4]{AKL09}.

\section{Optimal shape design using partial spectral data} \label{shape}

\subsection{Shape design via optimization}

In this subsection, we formulate our design problem via an optimization framework. We first recall our shape design problem: given a set of eigenvalues $\{\pm \lambda_i (B)\}_{i=1}^N$ corresponding to a target shape $B$,
we intend to find a shape $D$ such that the eigenvalues of the Neumann-Poincar\'e operator $\mathcal{K}_{\partial D}^*$, denoted as $\lambda_i(D)$, are approximately equal to $\lambda_i(B)$, i.e., $ \lambda_i(D) \approx \lambda_i(B)$ for $1\le i\le N$.  In order to achieve this, we have to
introduce an appropriate objective functional. In view of the invariance of eigenvalues under rigid transformations and scaling,
some effective penalty and regularization terms should be incorporated in the functional to ensure the local existence
and uniqueness of the minimizers.
This leads us to the following nonlinear functional for our shape design
\beqn
    {\cal J}_{I,\alpha, \beta} (D)
    &=& \f{1}{2} \sum_{i=1}^I w_i^2 |\lambda_i (D) - \lambda_i (B)|^2  +  \f{\alpha}{2} (|D|-1)^2 + \f{\beta}{2} \left( \int_D 2 |x_1|^2 + |x_2|^2 \right)\label{functional2} \notag \\
    &:=&  ({\cal J}_I)_0 (D) +  \f{\alpha}{2} {\cal A}(D) + \f{\beta}{2} {\cal B}(D)\,, \label{functional}
\eqn
where $I\le N$ is a given integer index, $\alpha, \beta \in \mathbb{R}^+$
are the parameters for the penalty and the regularization respectively.
In view of the large variation of the magnitudes of eigenvalues, we have also introduced
some weights $w_i$ in \eqref{functional}, which we will naturally choose to be
$w_i = {1}/{\lambda_i(B)}$.



For most existing optimization algorithms, we need to compute the variational derivatives
of the functionals involved. For this purpose, we introduce some auxiliary tools.
\begin{Lemma}
For a given shape $D$ and $f \in L^1(D)$, the shape derivative of the integral
\beqn
I(D):=\int_{D} f dx
\eqn
is given by
\beqn
    [\mathcal{D} I ]|_{D}(h) = \int_{\partial D} f h dt \,
    \label{variationsolution}
\eqn
at the variation $h \in \mathcal{C}^1(\partial D)$.
\end{Lemma}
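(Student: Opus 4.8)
The plan is to reduce this to a transport (Reynolds-type) computation: realize the normal perturbation (\ref{variationvariation}) by a one-parameter family of diffeomorphisms $T_\varepsilon$ carrying the fixed reference domain $D$ onto $D_\varepsilon$, pull the integral $I(D_\varepsilon)$ back to $D$ by the change-of-variables formula, and then differentiate at $\varepsilon=0$. The normal term produced by the volume distortion, combined with the divergence theorem, should collapse to the boundary integral claimed in (\ref{variationsolution}).

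Concretely, first I would construct the diffeomorphisms. Since $h\in\mathcal{C}^1(\partial D)$ and $\partial D$ is $\mathcal{C}^2$, the boundary velocity field $h\nu$ extends to a compactly supported field $V\in\mathcal{C}^1(\mathbb{R}^d;\mathbb{R}^d)$ with $V|_{\partial D}=h\nu$; set $T_\varepsilon(x):=x+\varepsilon V(x)$. For $|\varepsilon|$ small, $DT_\varepsilon=I+\varepsilon\,DV$ is invertible, so $T_\varepsilon$ is a $\mathcal{C}^1$-diffeomorphism taking $D$ onto $D_\varepsilon$ and $\partial D$ onto $\partial D_\varepsilon$ exactly as prescribed in (\ref{variationvariation}). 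Then the change of variables gives
\[
I(D_\varepsilon)=\int_{D_\varepsilon} f\,dy=\int_D f(T_\varepsilon(x))\,|\det DT_\varepsilon(x)|\,dx.
\]
Using the expansion $\det(I+\varepsilon\,DV)=1+\varepsilon\,(\nabla\cdot V)+O(\varepsilon^2)$, valid uniformly on $\overline D$, and differentiating at $\varepsilon=0$, the derivative splits into a transport part and a volume-change part,
\[
[\mathcal{D} I]|_D(h)=\int_D\big(\nabla f\cdot V+f\,\nabla\cdot V\big)\,dx=\int_D \nabla\cdot(fV)\,dx=\int_{\partial D} f\,(V\cdot\nu)\,dt=\int_{\partial D} f\,h\,dt,
\]
where the last two equalities are the divergence theorem and the identity $V\cdot\nu=h$ on $\partial D$.

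The step I expect to be delicate is the regularity of $f$: the computation above invokes $\nabla f$ and the boundary trace of $f$, neither of which is defined for a generic $f\in L^1(D)$, so the displayed argument rigorously covers only, say, $f\in\mathcal{C}^1(\overline D)$. To reach the weakest natural hypothesis I would instead use a tubular-neighborhood (Fermi-coordinate) argument that never differentiates $f$: write the signed difference $I(D_\varepsilon)-I(D)$ as an integral over the thin shell between $\partial D$ and $\partial D_\varepsilon$ via $y=x+s\nu(x)$, namely $\int_{\partial D}\int_0^{\varepsilon h(x)} f(x+s\nu(x))\,J(x,s)\,ds\,dt(x)$ with Jacobian $J(x,s)=1+O(s)$ controlled by the curvature $\tau$. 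Dividing by $\varepsilon$ and letting $\varepsilon\to0$ recovers $\int_{\partial D} f h\,dt$ as soon as the trace of $f$ on $\partial D$ exists, which holds for $f\in\mathcal{C}(\overline D)$ and in particular for the integrands $1$, $|x_1|^2$, $|x_2|^2$ actually appearing in (\ref{functional}). The remaining points are routine: dominated convergence to justify differentiating under the integral sign, and uniformity of the Jacobian expansions, both of which follow from the $\mathcal{C}^1$ bounds on $V$ and the $\mathcal{C}^2$ regularity of $\partial D$.
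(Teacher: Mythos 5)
Your proposal is correct, and in fact it contains the paper's own argument as its second half: the paper's proof is precisely the thin-shell computation in Fermi/tubular coordinates. Concretely, the paper writes $\int_{D_\varepsilon} f\,dx = \int_0^\varepsilon\int_{\partial D} f\,\det(X'+\tilde\varepsilon h\nu',\,h\nu)\,dt\,d\tilde\varepsilon + \int_D f\,dx$, differentiates at $\varepsilon=0$, and uses $X'\perp\nu$, $\nu'\perp\nu$ to collapse the determinant to $h$ — this is your $J(x,s)=1+O(s)$ shell argument with the Jacobian written out explicitly as a determinant. Your primary route via the global flow $T_\varepsilon=I+\varepsilon V$, the expansion of $\det DT_\varepsilon$, and the divergence theorem is a genuinely different (and more standard Hadamard/Reynolds) derivation; it buys a cleaner conceptual picture and generalizes immediately to non-normal velocity fields and higher dimensions, but, as you correctly note, it costs extra regularity on $f$ since it invokes $\nabla f$ and a boundary trace. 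Your observation that the stated hypothesis $f\in L^1(D)$ is too weak for the boundary integral $\int_{\partial D} f h\,dt$ even to be defined is a fair criticism that applies equally to the paper's proof, which silently assumes $f$ is continuous up to $\partial D$ (as it is for the integrands $1$ and $2|x_1|^2+|x_2|^2$ actually used in the cost functional); your restriction to $f\in\mathcal{C}(\overline D)$ is the honest fix.
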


\begin{proof}
Given a shape $D$ and $a,b \in \mathbb{R}$ with $a < b$, let
$
X(t): [a,b] \rightarrow \partial D
$ be an arc-length parametrization of $\partial D$ and $\nu(x)$ be the outward unit normal to $\partial D$ at $x\in \p D$.
For a $h \in \mathcal{C}^1(\partial D)$ which is non-zero everywhere, we consider the $\varepsilon$-perturbation $D_\varepsilon$
of $D$
as in \eqref{variationvariation}. For sufficiently small $\varepsilon >0$, we consider a change of variables $(x_1,x_2) \mapsto
(\tilde \varepsilon,t)$ in an $\varepsilon$-tabular neighborhood of $\partial D$. Denoting $\det(u,v) = \det \begin{pmatrix} u_1 & v_1 \\ u_2 & v_2 \end{pmatrix}$ for any $u,v \in \mathbb{R}^2$, then we can write
\beqn
    \int_{D_\varepsilon} f dx = \int^{\varepsilon}_{0} \int_{\partial D} f \det(X'+\tilde\varepsilon h \nu', h \nu ) dt d \tilde\varepsilon + \int_{D} f dx \, . \label{labelint}
\eqn
Using the fact that $X' \bot \nu$ and $\nu' \bot \nu$, we can evaluate the shape derivative of $I$ at $h$ by
\beqn
     [\mathcal{D} I ]|_{D}(h) &=& \f{\partial}{\partial \varepsilon} \int_{D_\varepsilon} f dx \bigg|_{\varepsilon = 0} \notag \\
    &=&  \int_{\partial D} f \det(X'+\varepsilon h \nu', h \nu ) dt |_{\varepsilon = 0}  \notag \\
    &=&  \int_{\partial D} f \det(X' , h \nu ) dt  \notag \\
    &=&  \int_{\partial D} f h dt \, . \notag
    \label{intintint}
\eqn
This leads to the desired formula.
\end{proof}

Using \eqref{variationsolution}, we readily know the shape derivative of the following integrals at $h$:
\beqn
    [\mathcal{D} ( |D|)] (h) = \int_{\partial D} h dt \, , \quad \bigg[\mathcal{D} (\int_{D} 2 |x_1|^2 + |x_2|^2  dx )\bigg] (h) = \int_{\partial D} (2 |x_1|^2 + |x_2|^2) h dt \, .
    \label{D3}
\eqn

With the above preparations we can now discuss the  minimization
of functional \eqref{functional}.  In this work we will focus on the Gauss-Newton method for this minimization.
We first introduce some more notions. 

Given a shape $D$ and $I \in \mathbb{N}$,  we write the vector
$\lambda_{w}(D) = ( w_1 \lambda_1(D), \cdots, w_I \lambda_I(D) )^T$ with the superscript $T$ denoting the transpose, and
define the Jacobian of the map $D \mapsto \lambda_{w}(D)$:
\beqn
 \mathcal{J}_I|_{D} : L^2 (\partial D) &\mapsto& \mathbb{R}^I \notag \\
 \mathcal{J}_I|_{D}(h) &=& ( w_1[\mathcal{D} \lambda_1]\big |_{D} (h) , \cdots, w_I [\mathcal{D} \lambda_I]\big |_{D} (h) )^T,
\eqn
where $[\mathcal{D} \lambda_i] \big|_{D}(h)$ is the shape derivative of the Fredholm eigenvalue $\lambda_i(D)$
at $h$, which can be computed by \eqref{D2}.
Let $\mathcal{J}_I\big|_{D}^*$ and $[\mathcal{D} (|D|)]\big |_{D}^*$ be the respective $L^2(\partial D)$ adjoint of $\mathcal{J}_I\big|_{D}$ and $[\mathcal{D} (|D|)] \big|_{D}$, then the Gauss-Newton direction of \eqref{functional} for a shape $D$ can be written as
\beqn
N_{I,\alpha, \beta} (D)
    &:=&([\mathcal{J}_I \big|_{D}]^* [\mathcal{J}_I \big|_{D}] )^{-1} \mathcal{J}_I \big|_{D}^*  [ \lambda_w (D) - \lambda_w (B) ]  \notag\\
    &&+ \alpha [\mathcal{D} (|D|)] \big|_{D}^*  (|D|-1)  +  \beta\bigg[\mathcal{D}(\int_D 2|x_1|^2 + |x_2|^2)\bigg]\bigg|_{D}\,.
    \label{gaussnewtondirection}
\eqn
Now we are ready to formulate the Gauss-Newton method for the minimization
of functional \eqref{functional}: Let $D_n$ be the $n$-th approximation of the shape $D$, and
$X_n$ be its arc-length parametrization, then we update $X_n$  by the following iteration:
\beqn
    X_{n+1}= X_{n} - \gamma_n N_{I,\alpha_n, \beta_n} (D_{n})\,, \label{newton_formula}
\eqn
where $\gamma_n$, 
$\alpha_n$ and $\beta_n$ are parameters chosen at each iteration and $N_{I,\alpha_n, \beta_n} (D_{n})$
is the Gauss-Newton direction as defined in \eqref{gaussnewtondirection} with $\alpha = \alpha_n$ and $\beta=\beta_n$.
The choice of parameters $\gamma_n$, $\alpha_n$ and $\beta_n$ will be discussed in details
in the next subsection.

\subsection{Successive refinement for optimization and parameter selection}\l{sec:parameters}

In this subsection, we describe several detailed strategies for the minimization of the functional $\mathcal{J}_{I,\alpha, \beta}$ in (\ref{functional}).  These strategies are crucial for our algorithm to work efficiently, and to overcome the difficulties
arising from the strong nonlinearity and severe ill-posedness of the current shape design problem.

Our first strategy is a successive refinement technique for the minimization. 
This strategy is motivated by our observations from numerical experiments.
Due to the strong nonlinearity and ill-posedness,
iteration (\ref{newton_formula}) may stop at some local minima of (\ref{functional})
when $I \in \mathbb{N}$ is large. On the other hand, for small $I$, we observe that
iteration (\ref{newton_formula}) converges
often to a global minimum of (\ref{functional}) rapidly even with a poor initial guess.
But functional (\ref{functional}) does not capture fine features of the target shape
if $I$ is too small.  These observations motivate us with the following successive refinement strategy:
We first minimize $\mathcal{J}_{I,\alpha, \beta}$ in (\ref{functional}) with $I = 2$, then
minimize $\mathcal{J}_{I,\alpha, \beta}$ for $I = 3,\cdots,N$ recursively by
using the minimizer of $\mathcal{J}_{I-1,\alpha, \beta}$ as an initial guess.
As we will see in our numerical experiments,
this strategy works very effectively in avoiding the trapping of the minimization process
at some local minima as well as providing us with more fine details for our shape design.

The next strategy is on the choice of parameters $\alpha_n$ and $\beta_n$ for iteration (\ref{newton_formula}).
$\alpha_n$ and $\beta_n$ should be chosen such that the contributions on the search directions
in (\ref{newton_formula}) from three parts $(\mathcal{J}_I)_0 (D)$, $\mathcal{A}(D)$ and $\mathcal{B}(D)$ in (\ref{functional2}) are balanced at each iteration.
Under these considerations, a possible choice is that we first fix two small positive constants $C_1$ and $C_2 $,
then update $\alpha_n$ and $\beta_n$  at each iteration by
\beqn
\alpha_n = C_1 \f{(\mathcal{J}_{I})_0(D_n)}{\mathcal{A}(D_n)} \, , \quad \beta_n = C_1 \f{(\mathcal{J}_{I})_0(D_n)}{\mathcal{B}(D_n)} \, .
\label{choiceparameters}
\eqn

Our last strategy is on the choice of step size $\gamma_n$ along the Gauss-Newton direction
$N_{I,\alpha_n, \beta_n} (D_{n})$,
for which we will carry out the line search, namely
\beqn
\gamma_n = argmin \left\{\mathcal{J}_{I,\alpha_n, \beta_n} (X_{n} - \gamma
N_{I,\alpha_n, \beta_n} (D_{n})) : \gamma \in \mathbb{R}^+\right\} \,.
\label{choicegamma}
\eqn

Combining the above three strategies, we arrive at the successive refinement Gauss-Newton shape design algorithm.

\begin{center}
\textbf{Reconstruction Algorithm}
\end{center}
\begin{enumerate}
\item[\textbf{Step 1}]
Given a tolerance $\varepsilon$ and  an initial guess $D_{1,0}$.
\item[\textbf{Step 2}]
For $I = 1$ to $N$,
\begin{itemize}
\item[\textbf{Step 2.1}]
Set $n := 1$;
\item[\textbf{Step 2.2}]
Compute $\alpha_{I,n}$, $\beta_{I,n}$ as in (\ref{choiceparameters});
\item[\textbf{Step 2.3}]
Compute the Gauss-Newton direction $N_{I,\alpha_n, \beta_n} (D_{I,n})$ as in \eqref{gaussnewtondirection};

Find the step size $\gamma_n$ as in \eqref{choicegamma};  Then update $X_{I,n}$ by
\[
    X_{I,n+1} = X_{I,n} - \gamma_{I,n} N_{I,\alpha_n, \beta_n} (D_{I,n})\,;
\]
\item[\textbf{Step 2.4}]
If $| \mathcal{J}_{I,\alpha_n, \beta_n} (X_{I,n}) - \mathcal{J}_{I,\alpha_n, \beta_n} (X_{I,n+1}) | < \varepsilon $,
set $D_{I,\text{stab}}: = D_{I,n+1}$; otherwise set $n := n+1$ and go to \textbf{Step 2.2};
\item[\textbf{Step 2.5}]
Take $D_{I+1,0} := D_{I,\text{stab}}$.
\end{itemize}
\item[\textbf{Step 3}]
Find $n_0 \in \{1,2,..,N\}$ such that $D_{n_0,\text{stab}}$ has the minimal residue:
\[(\mathcal{J}_N)_0 (D_{n_0,\text{stab}}) = \min \limits_{I\in \{1,2,..,N\}} \big\{ (\mathcal{J}_N)_0 (D_{I,\text{stab}}) \big\} \, .\]
Output $D_{n_0,\text{stab}}$ and stop.
\end{enumerate}

\subsection{Numerical results} \label{numerical1}

In this section, we shall present several numerical examples to check the performance
of the newly proposed reconstruction algorithm  in section\,\ref{sec:parameters}
for the optimal shape design using partial spectral data.

Given a domain $D$, we first obtain the observed data of the forward problem, the Fredholm eigenvalues of $D$, as in section \ref{numerical12}. In order to test the robustness of our reconstruction algorithm, we introduce some
multiplicative random noise in the eigenvalues of the forward problem as follows:
\beqn
\lambda_i^{\sigma} = \lambda_i (1 + \sigma\, \xi )\,, \quad i = 1,\cdots,N \,,
\label{noise}
\eqn
where $\xi$ is uniformly distributed between -1 and 1 and $\sigma$ corresponds to the level of the noise in the data, which
is always set to be 1\% in all our examples.  It is well-known that the perturbations in the eigenvalues often affect the resulting
computations greatly in many applications. It is the same in our current cases.
When a new set of random noise is added in the eigenvalues as in (\ref{noise}), it gives us
a different set of observed data $\{\lambda_i^{\sigma}\}$. But it is interesting to us that
for each example we demonstrate in this section, we obtain only about 2 or 3 basic shapes by our reconstruction algorithm,
and all the other shapes obtained with different set of random noise are basically of very small perturbations
around these 2 or 3 basic shapes.
In our choices of $\alpha_n$, $\beta_n$ and tolerance $\varepsilon$, we take $C_1=C_2 =0.01$ in (\ref{choiceparameters}),
and $\varepsilon=5\times 10^{-4}$. And we will take the first 7 eigenvalues in the observed data,
namely  $N = 7$  in our reconstructions.

\ss
\no\textbf{Example 1}.
This example tests an ellipse of the form (\ref{ellipsehaha}) as the target shape;
see Figure \ref{invellipse}(a).
Figures \ref{invellipse}(c) and \ref{invellipse}(d) show two reconstructed shapes that appear most frequently with
different sets of random noise. The initial guess in the reconstruction is a shape of the form (\ref{circle_perturb_form})
with $\delta = 0.6$, $m = 5$; see Figure \ref{invellipse}(b). Clearly this is a very poor initial shape, but the reconstructed
shapes seem quite satisfactory.

\begin{figurehere}
 \hfill{}\includegraphics[clip,width=0.45\textwidth]{./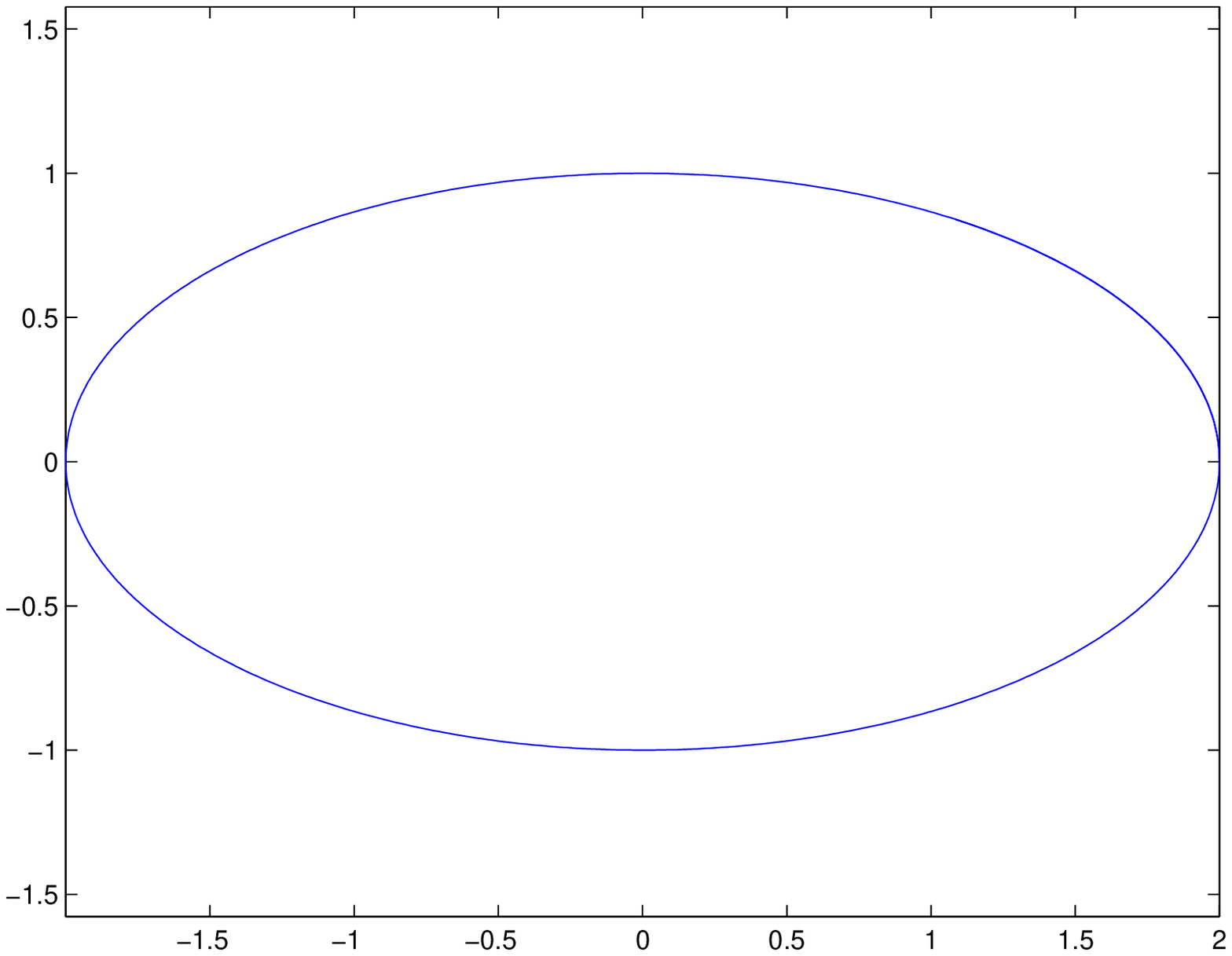}\hfill{}
 \hfill{}\includegraphics[clip,width=0.45\textwidth]{./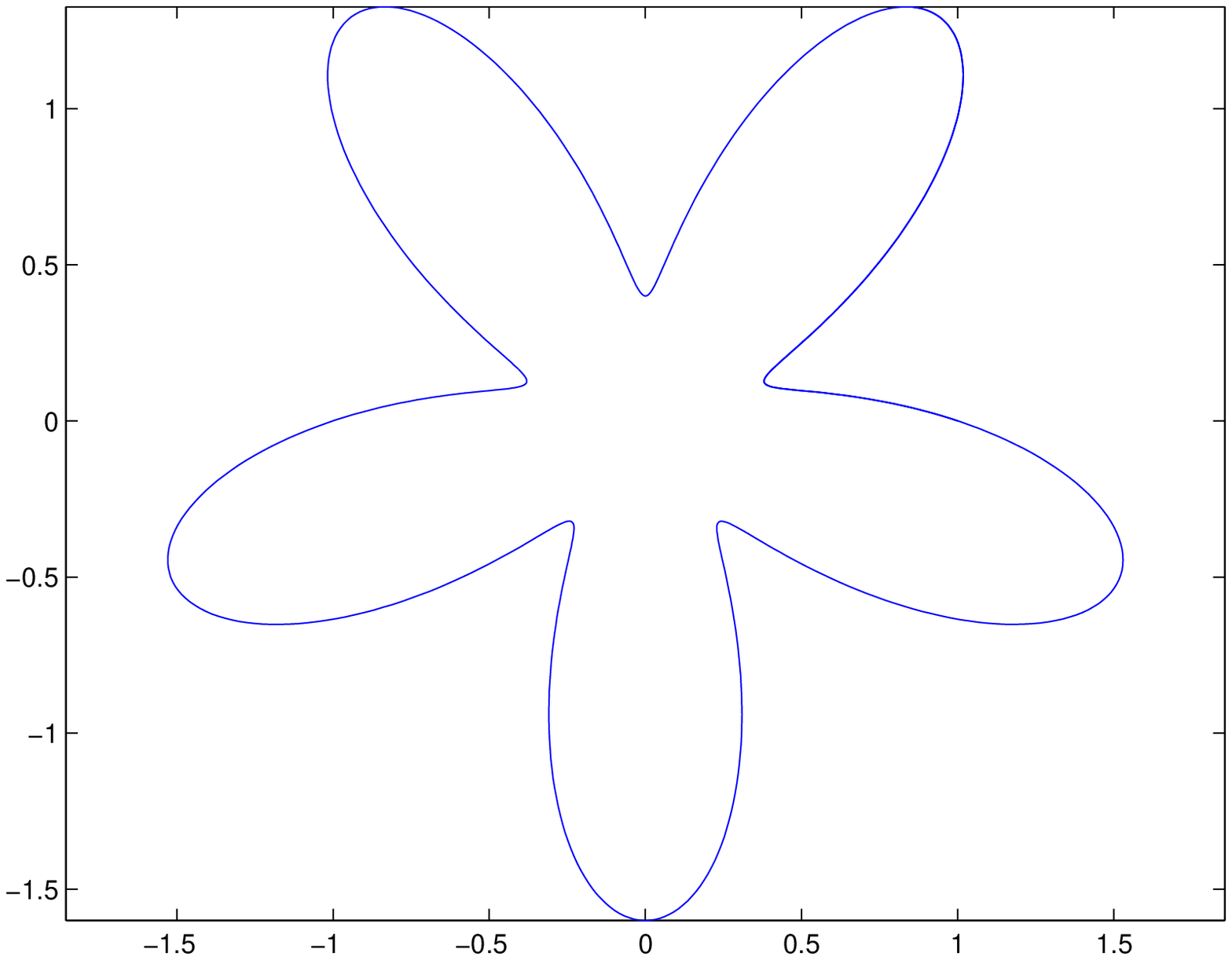}\hfill{}

 \hfill{}(a)\hfill{} \hfill{}(b)\hfill{}

\hfill{}\includegraphics[clip,width=0.45\textwidth]{./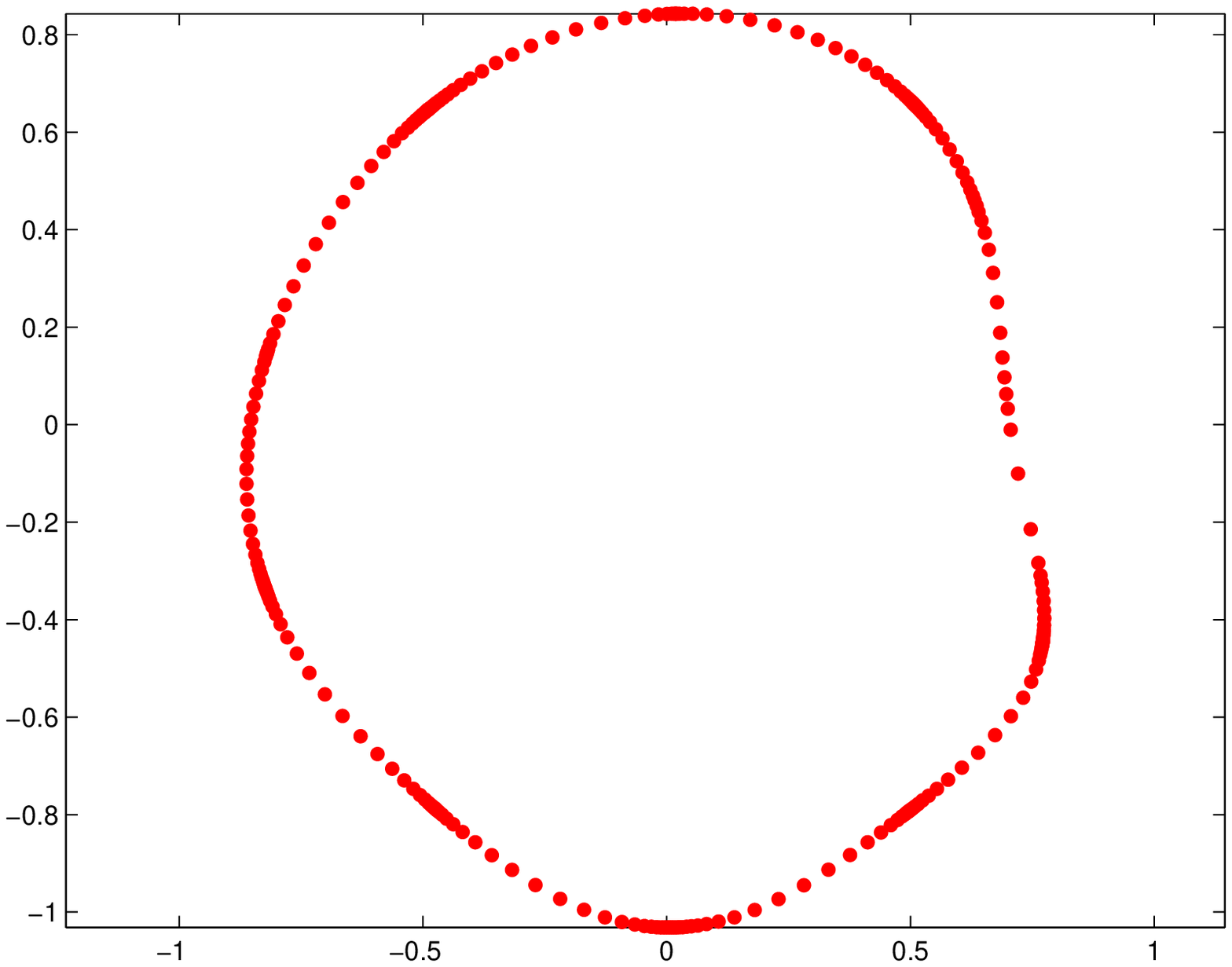}\hfill{}
 \hfill{}\includegraphics[clip,width=0.45\textwidth]{./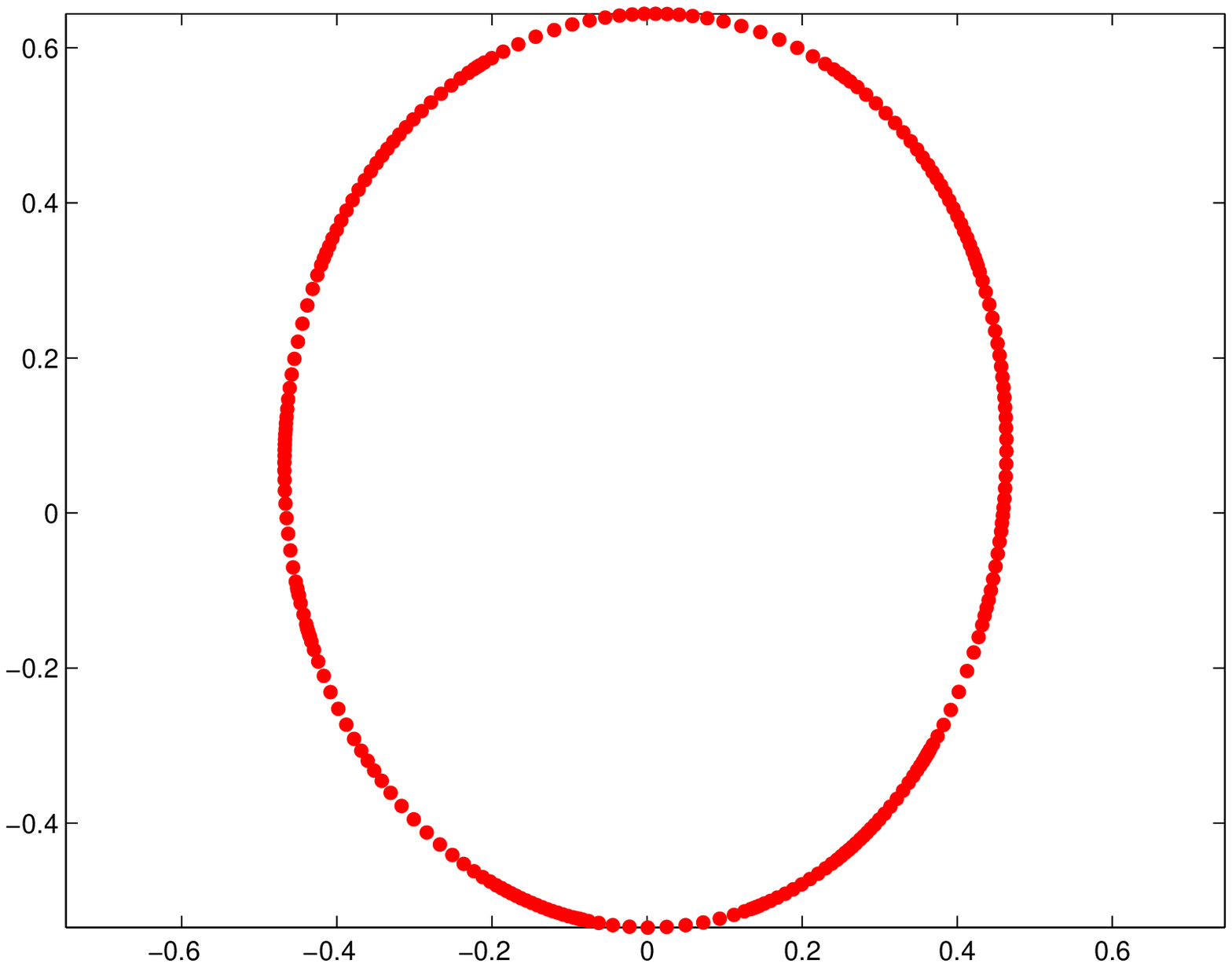}\hfill{}

 \hfill{}(c)\hfill{} \hfill{}(d)\hfill{}
 \vskip -0.15truecm
 \caption{\small (a): target shape in Example 1;  (b): initial guess;
 (c) and (d):  two reconstructed shapes that appear most frequently with 1\% random noise.}
 \label{invellipse}
 \end{figurehere}

\ss\no \textbf{Example 2}.
In this example, our target shape is a heart-shaped domain of the form (\ref{circle_perturb_form}) with $\delta = 0.8$, $m = 1$; see Figure \ref{invheart}(a).  Starting with a very poor initial guess, a shape of the form (\ref{circle_perturb_form}) with $\delta = 0.6$, $m = 7$ (see Figure \ref{invheart}(b)), two reconstructed shapes that appear most frequently
with different sets of random noise are shown in Figures \ref{invheart}(c) and \ref{invheart}(d).
Considering the invariance of the target shape up to translation, rotation, and scaling, our reconstructions
seem to be rather satisfactory.

 \begin{figurehere}
 \hfill{}\includegraphics[clip,width=0.45\textwidth]{./circle_perturb}\hfill{}
 \hfill{}\includegraphics[clip,width=0.45\textwidth]{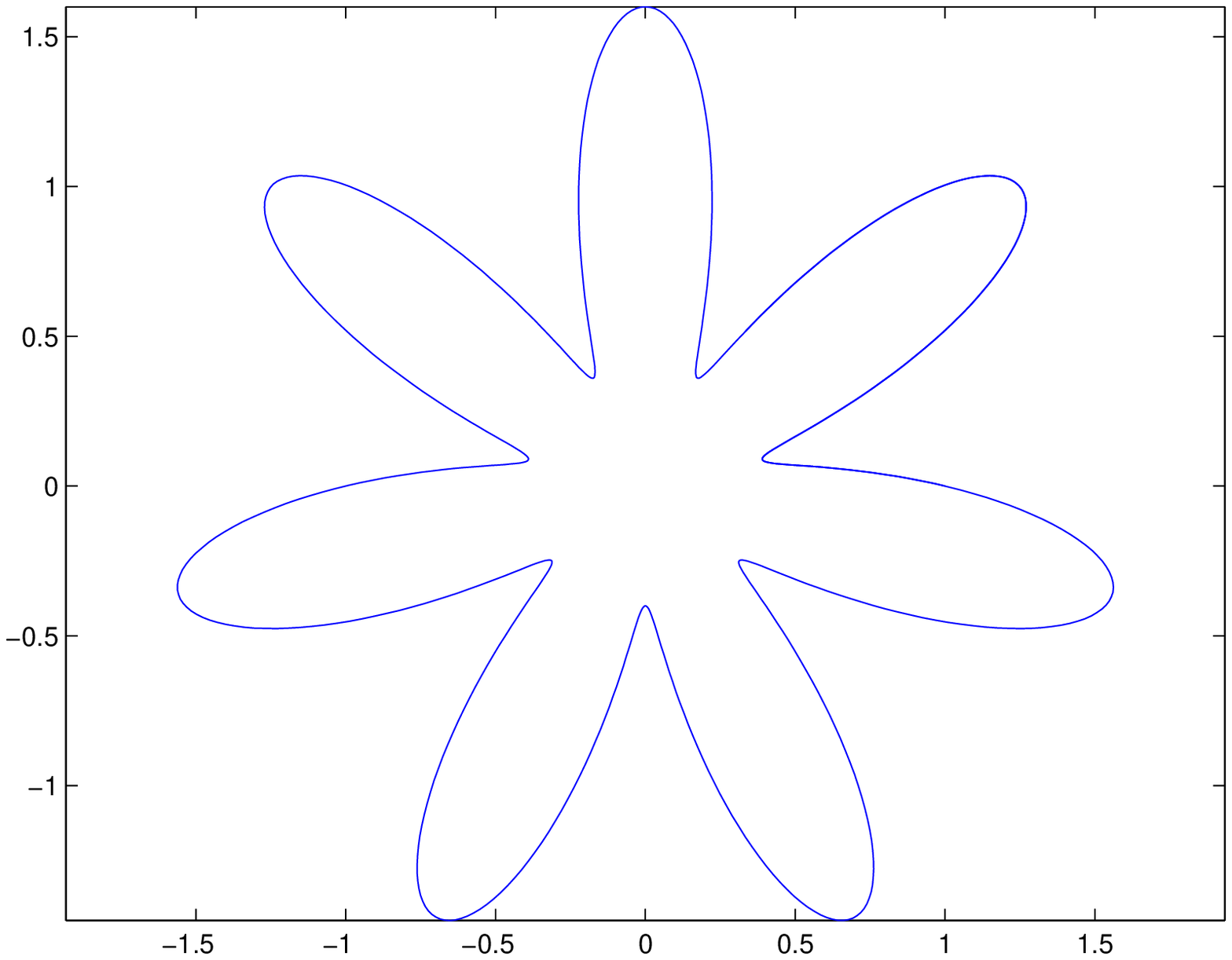}\hfill{}

 \hfill{}(a)\hfill{} \hfill{}(b)\hfill{}

\hfill{}\includegraphics[clip,width=0.45\textwidth]{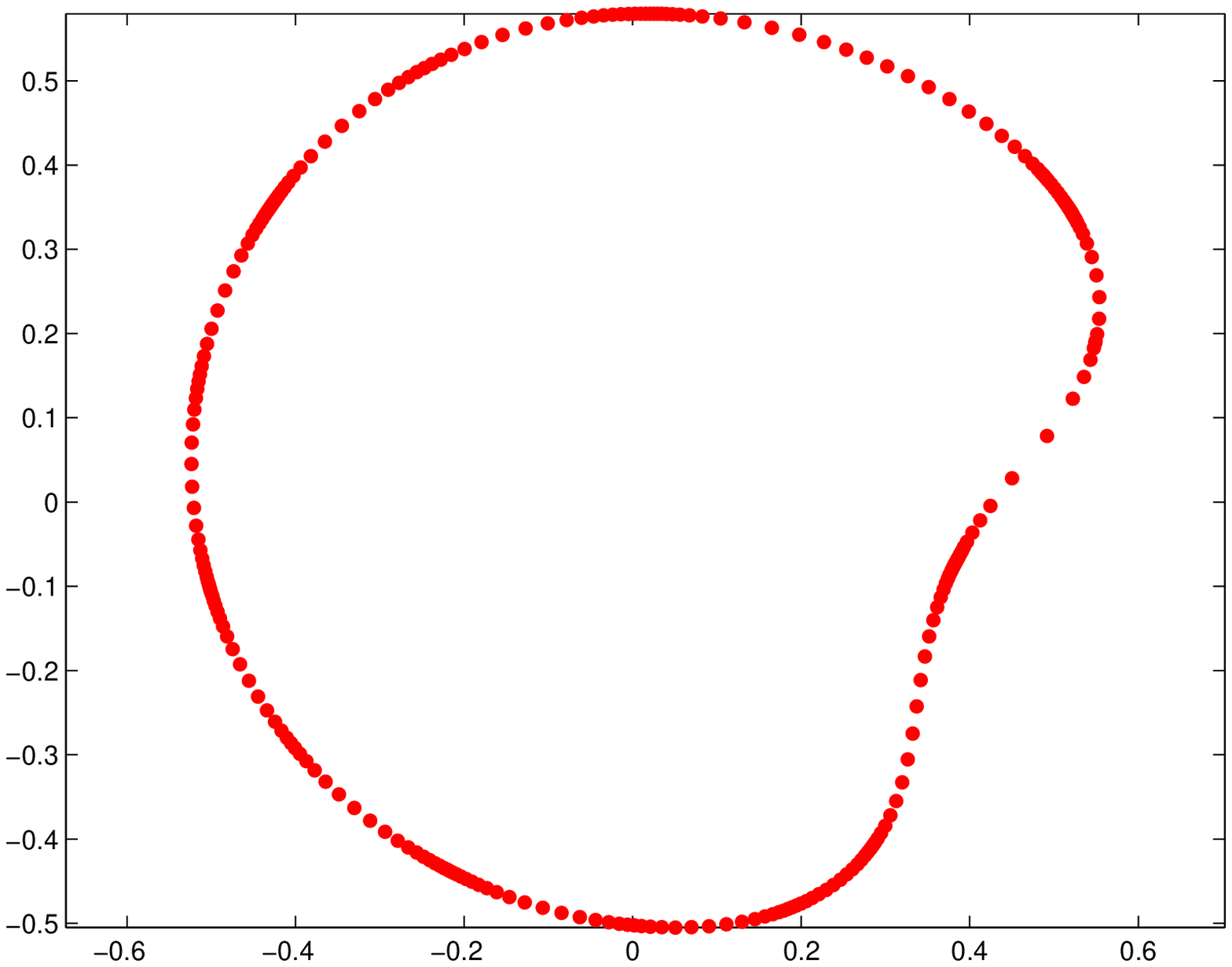}\hfill{}
 \hfill{}\includegraphics[clip,width=0.45\textwidth]{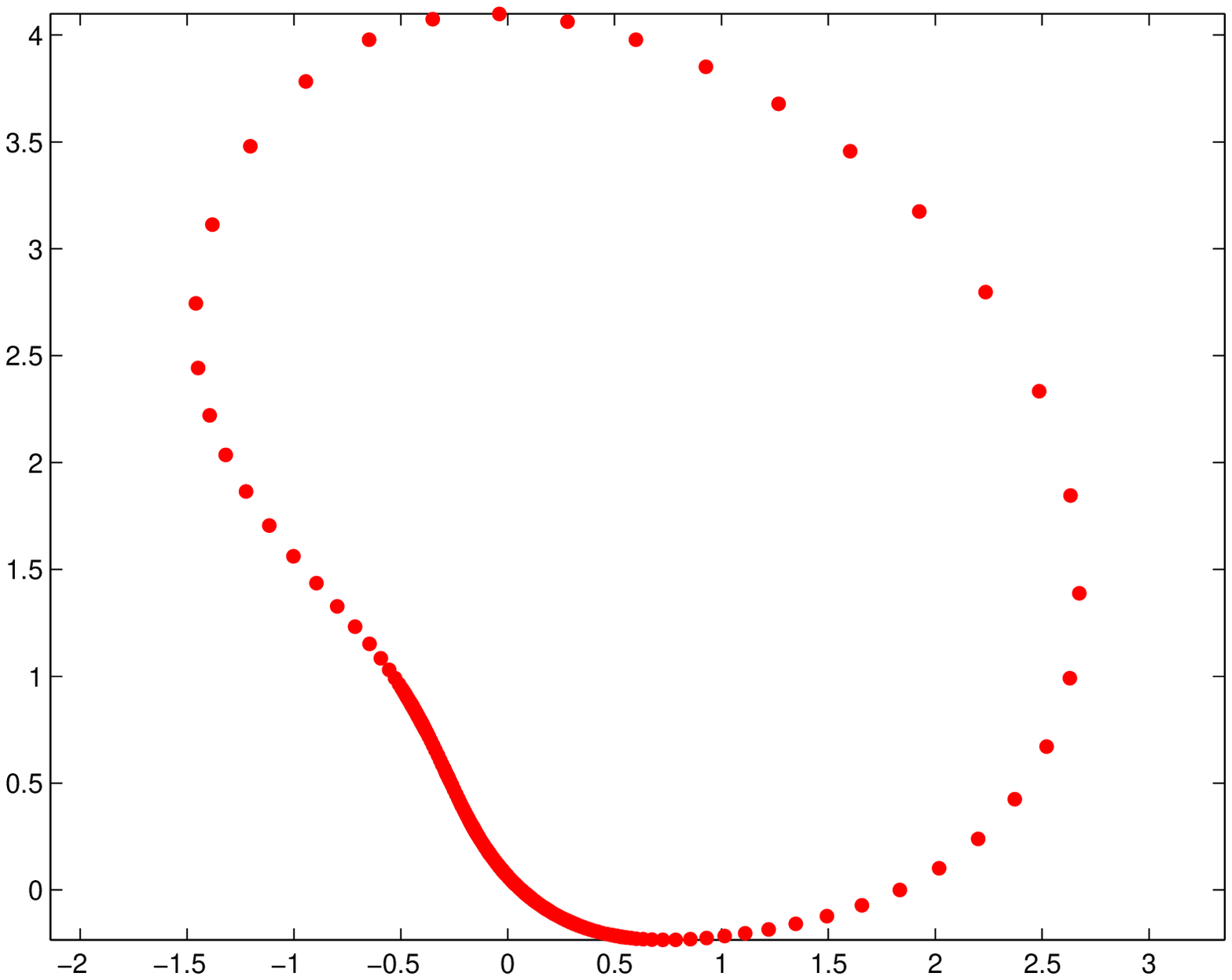}\hfill{}

 \hfill{}(c)\hfill{} \hfill{}(d)\hfill{}
  \vskip -0.15truecm
 \caption{\small (a): target shape in Example 2;  (b): initial guess;
 (c) and (d):  two reconstructed shapes that appear most frequently with 1\% random noise.}
 \label{invheart}
 \end{figurehere}

\ss\no\textbf{Example 3}.
A peanut-shaped domain of the form (\ref{circle_perturb_form}) with $\delta = 0.6$, $m = 2$ is investigated in this example; see Figure \ref{invpeanut}(a). Our initial guess is of the form (\ref{circle_perturb_form}) with $\delta = 0.6$, $m = 5$; see Figure \ref{invpeanut}(b).  Figure \ref{invpeanut} (c) and Figure \ref{invpeanut}(d) present two reconstructed shapes that appear most frequently
with different sets of random noise.

 \begin{figurehere}
 \hfill{}\includegraphics[clip,width=0.45\textwidth]{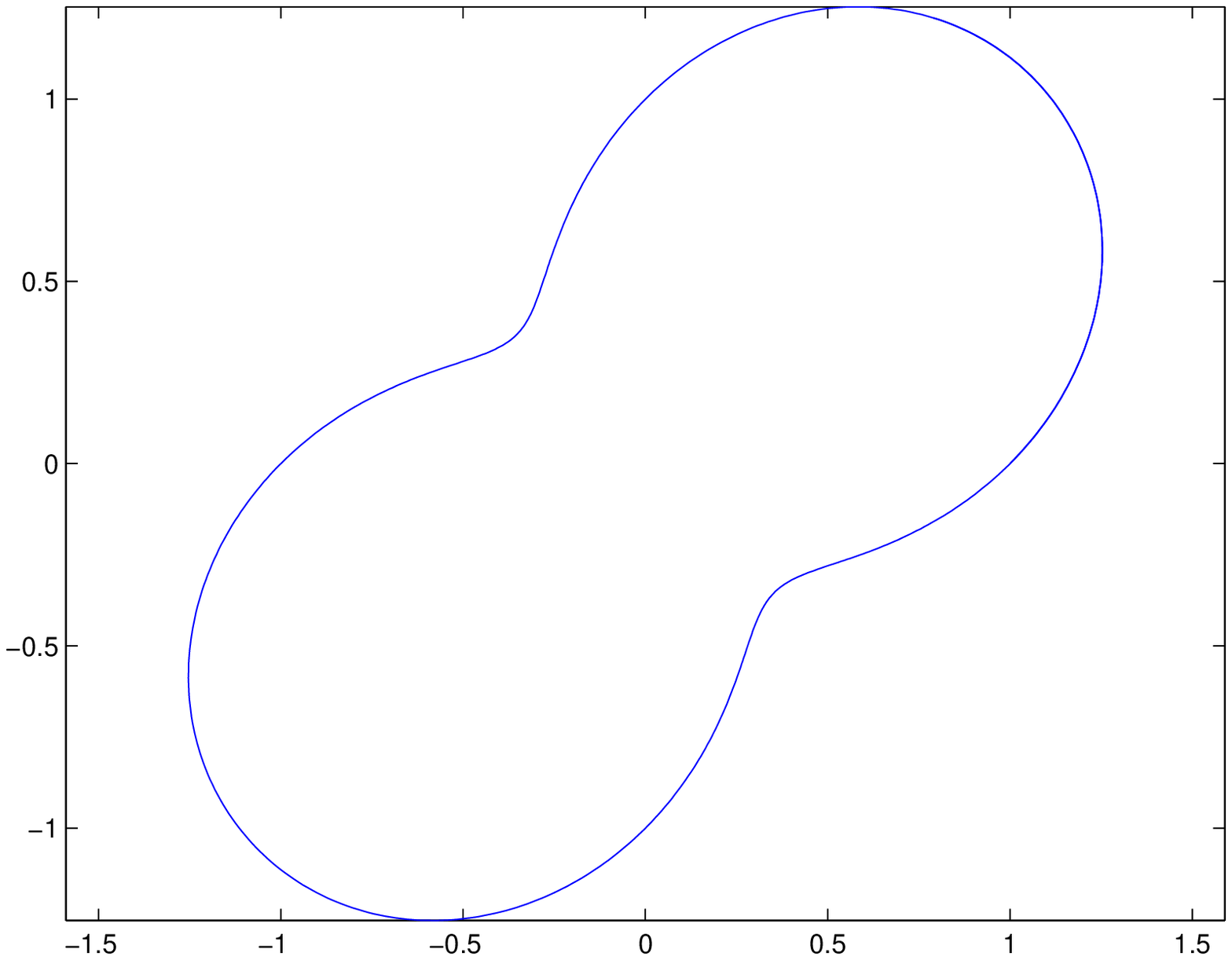}\hfill{}
 \hfill{}\includegraphics[clip,width=0.45\textwidth]{./floriform5petals}\hfill{}

 \hfill{}(a)\hfill{} \hfill{}(b)\hfill{}

\hfill{}\includegraphics[clip,width=0.45\textwidth]{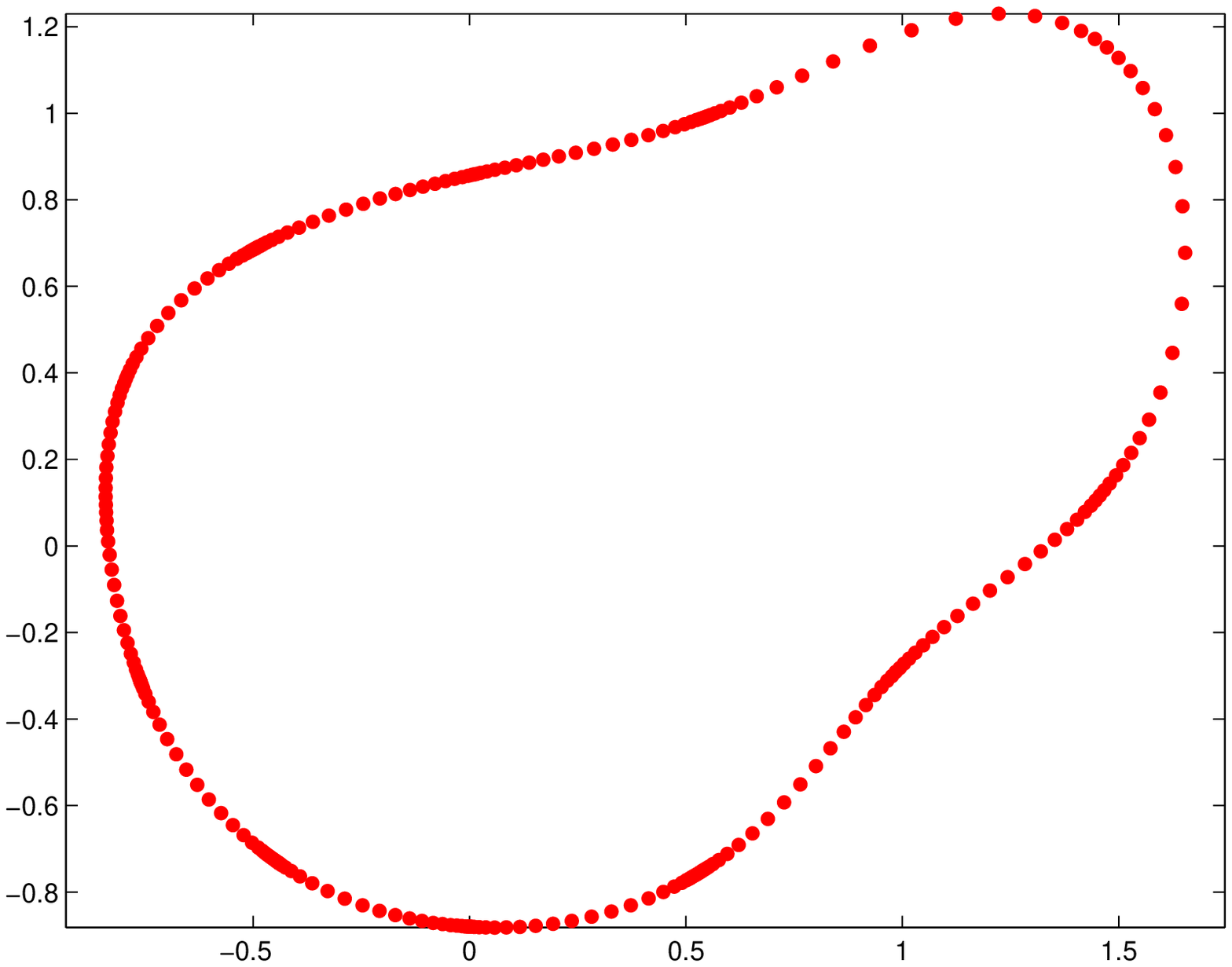}\hfill{}
 \hfill{}\includegraphics[clip,width=0.45\textwidth]{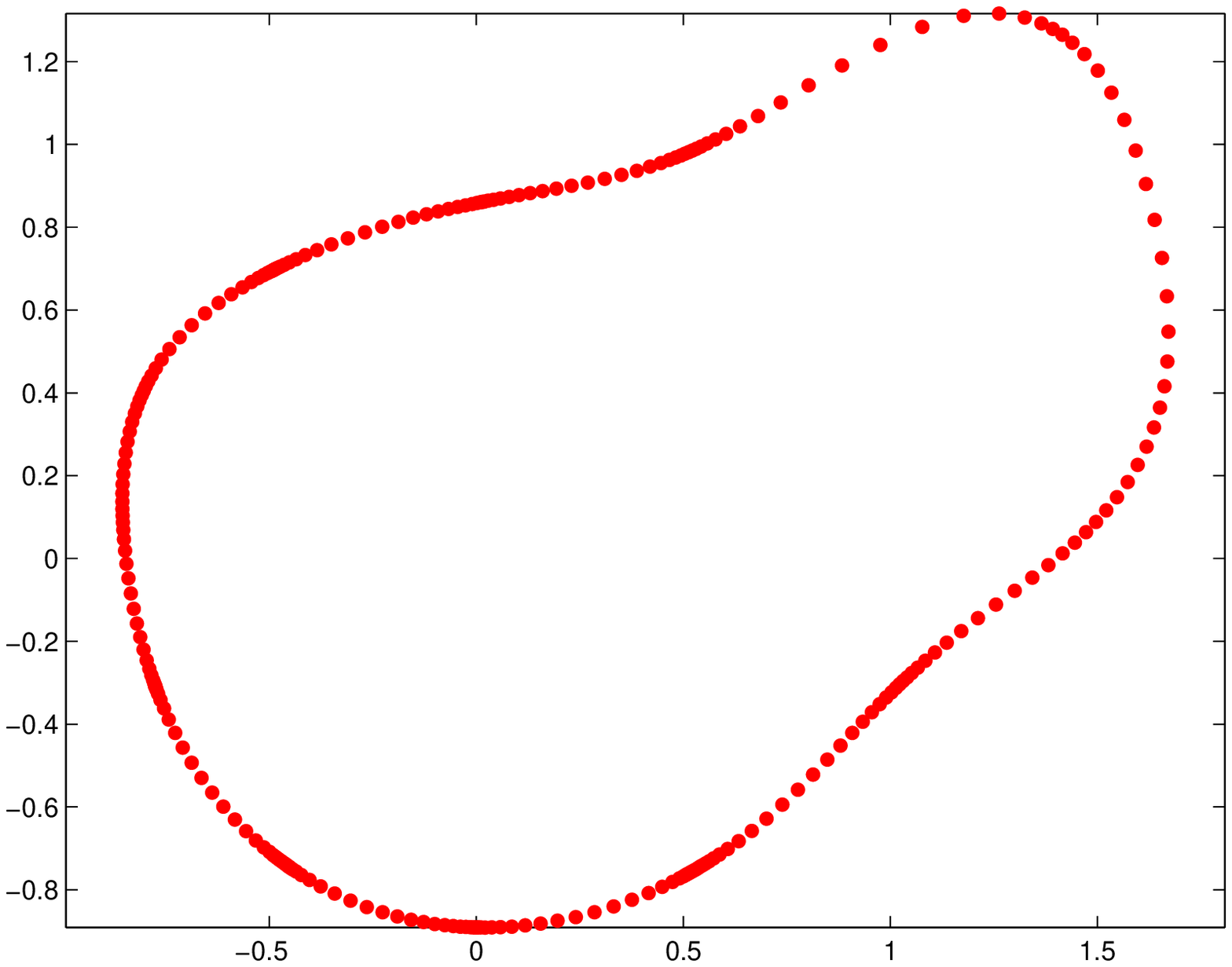}\hfill{}

 \hfill{}(c)\hfill{} \hfill{}(d)\hfill{}
 \vskip -0.15truecm
 \caption{\small (a): target shape in Example 3;  (b): initial guess;
 (c) and (d):  two reconstructed shapes that appear most frequently with 1\% random noise.}\label{invpeanut}
 \end{figurehere}

\ss\no \textbf{Example 4}.
In this example, we consider a pear-shaped domain of the form (\ref{circle_perturb_form}) with $\delta = 0.3$, $m = 3$; see Figure \ref{invpear}(a).  We start from the initial guess of the form (\ref{circle_perturb_form}) with $\delta = 0.6$, $m = 3$; see Figure \ref{invpear}(b). The reconstructed shapes that appear most frequently from the data polluted by different sets of random noise are shown in Figure \ref{invpear}(c) and  Figure \ref{invpear}(d).
Considering the random noise added in the spectral data and
the sensitivity of eigenvalue problem, our reconstructions prove to be
quite satisfactory.

\section{Concluding remarks}

In this work we have proposed numerical methods to recover the Fredholm eigenvalues of
a domain from the measurements of its polarization tensor at multiple contrasts or frequencies.
Then we have developed an optimal shape design algorithm (up to rigid transformations and scaling) based on partial knowledge of
Fredholm eigenvalues. Both inverse problems are highly nonlinear and severely ill-posed, but
our numerical experiments have demonstrated the effectiveness and robustness of the
proposed reconstruction algorithms. By using only the first few Fredholm eigenvalues, we have regularized the considered inverse problems. We expect that our results will have important applications in plasmon resonant nanoparticle design and in multifrequency imaging and classification of small anomalies from electrical capacitance measurements.

 \begin{figurehere}
 \hfill{}\includegraphics[clip,width=0.45\textwidth]{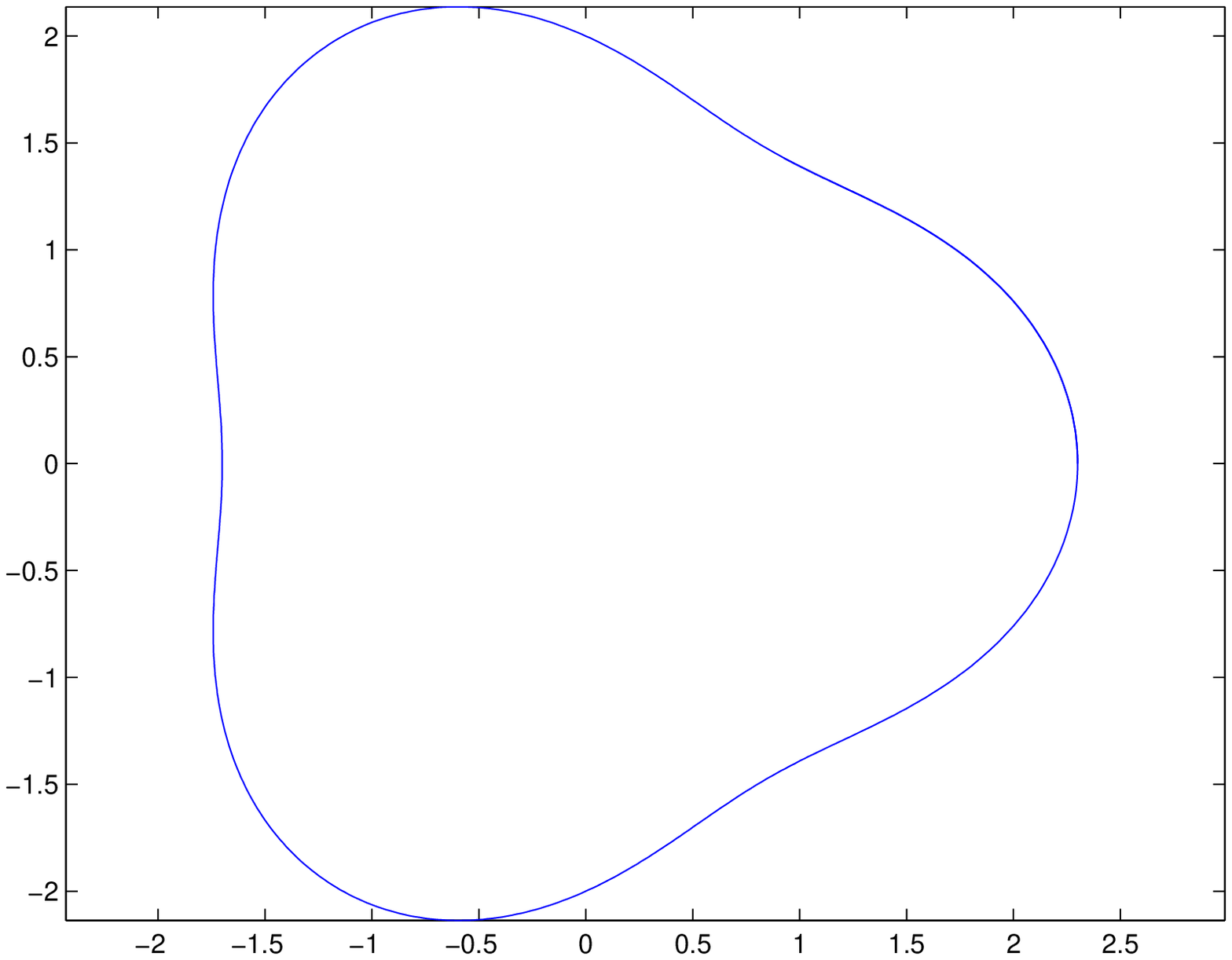}\hfill{}
 \hfill{}\includegraphics[clip,width=0.45\textwidth]{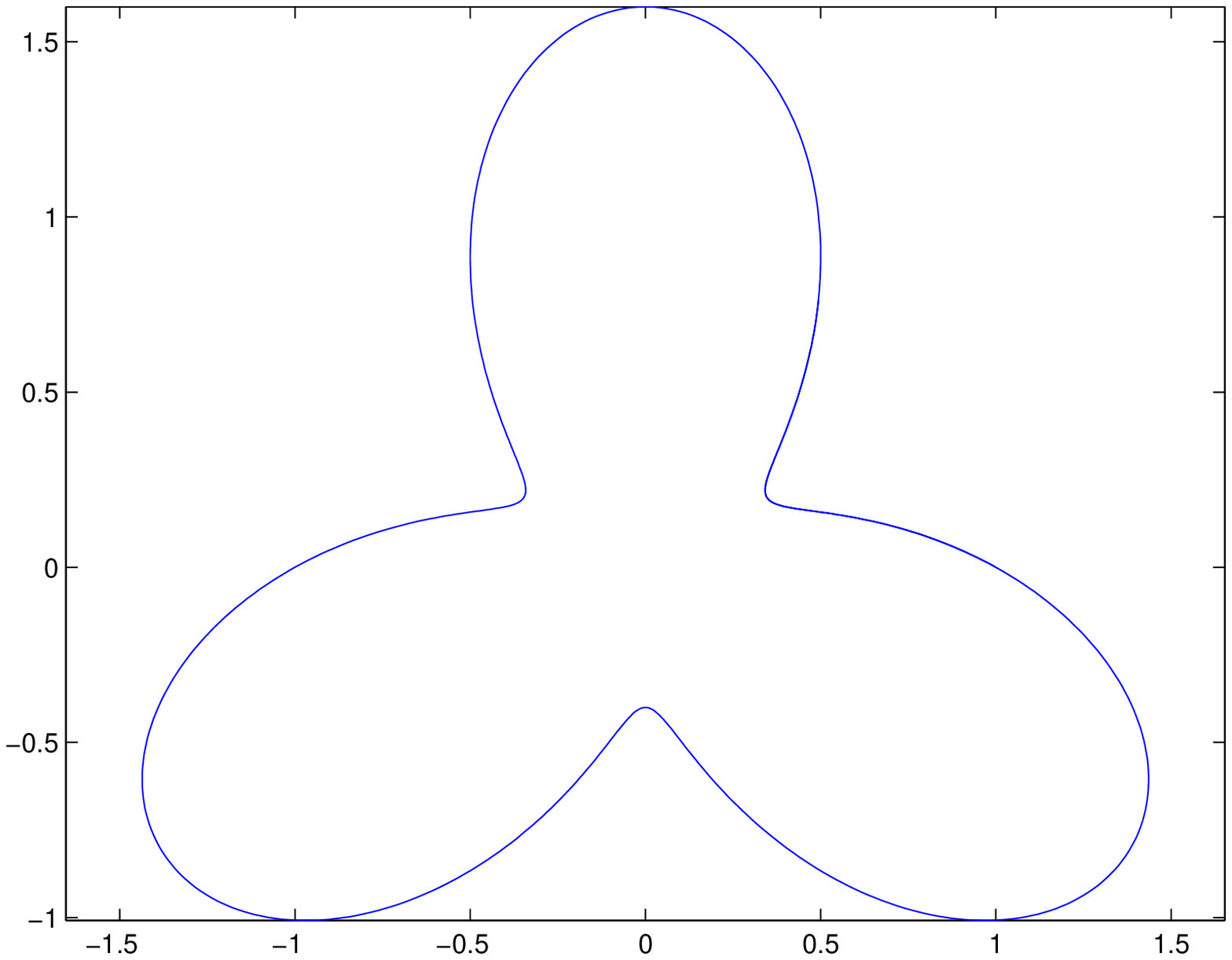}\hfill{}

 \hfill{}(a)\hfill{} \hfill{}(b)\hfill{}

\hfill{}\includegraphics[clip,width=0.45\textwidth]{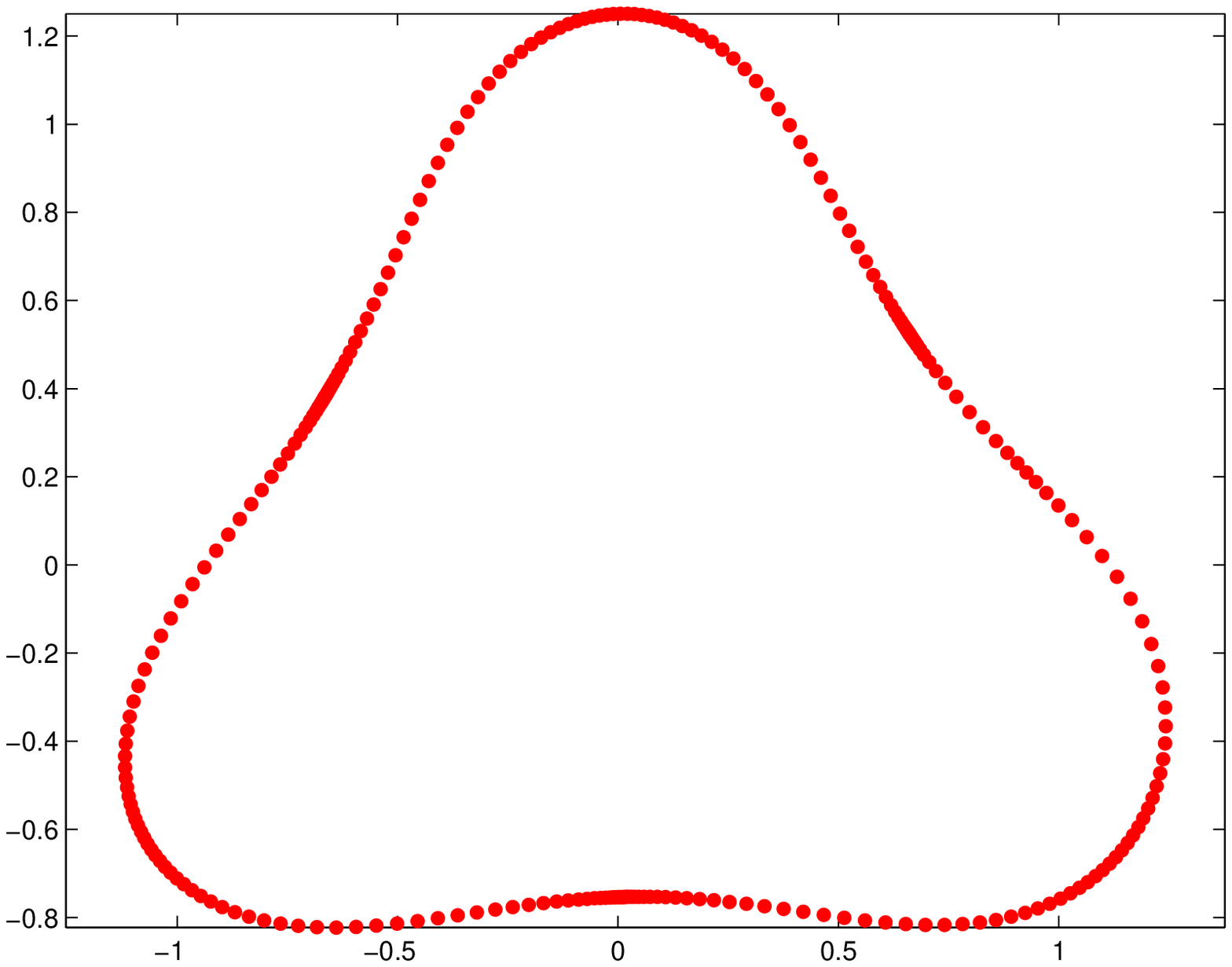}\hfill{}
 \hfill{}\includegraphics[clip,width=0.45\textwidth]{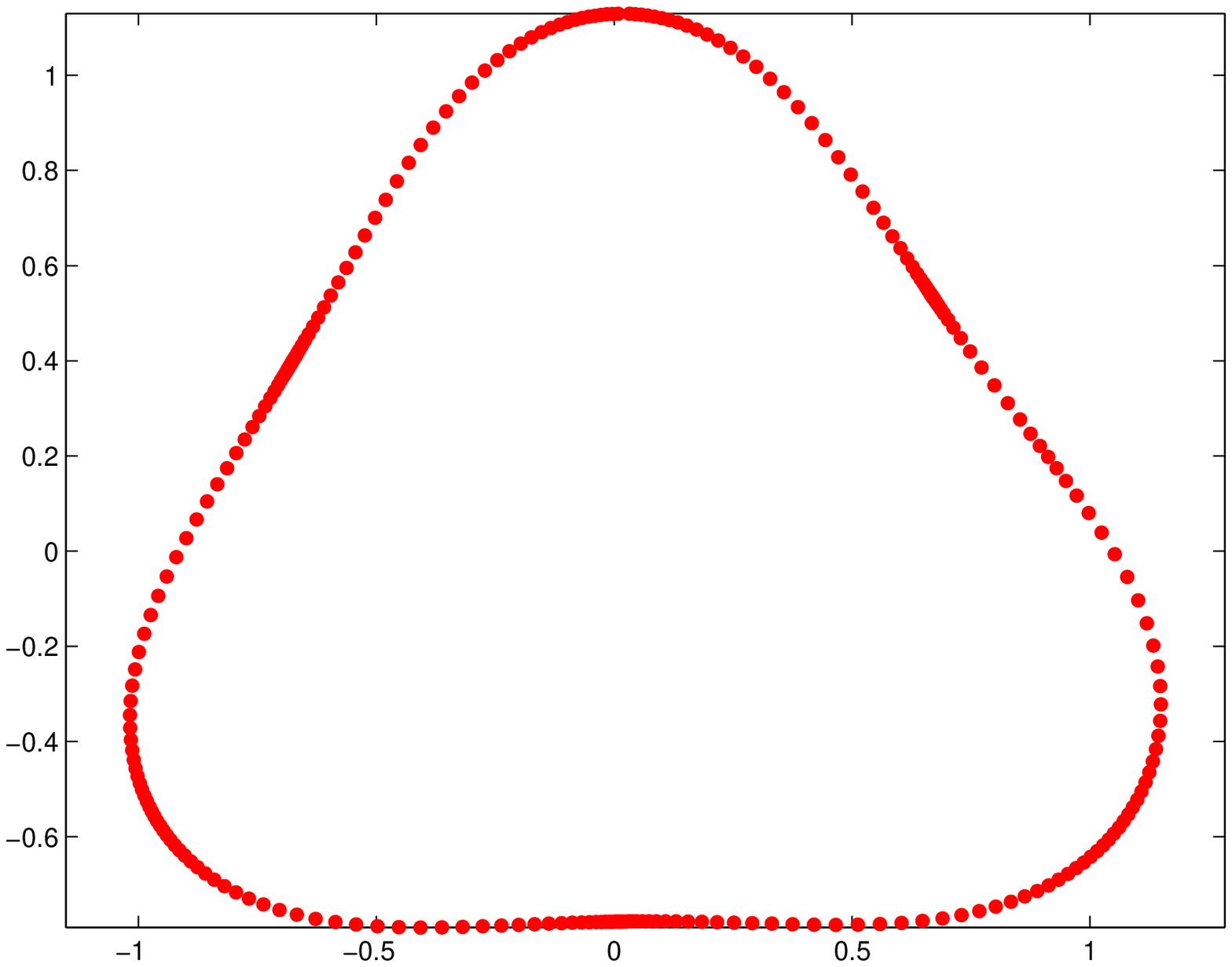}\hfill{}

 \hfill{}(c)\hfill{} \hfill{}(d)\hfill{}
 \vskip -0.15truecm
 \caption{\small (a): target shape in Example 4;  (b): initial guess;
 (c) and (d):  two reconstructed shapes that appear most frequently with 1\% random noise.}
 \label{invpear}
 \end{figurehere}

\appendix

\section{Pulse shape design} \label{curve}

In this section, we show how to acquire the PT at multiple contrasts in electrical capacitance
tomography using pulsed imaging.

Given a harmonic function $u_0$ in $\mathbb{R}^d$, a final time $T>0$, and pulse shape $h(t)$, that is supposed to be a compactly supported function in $(0, T)$, electrical capacitance
tomography is to reconstruct the inclusion $D$ from measurements of the solution $u(x,t)$ to the following system
\beqn
    \begin{cases}
        \nabla \cdot \bigg( \varepsilon_{D} + \varepsilon_c^\prime \partial_t \bigg) \nabla u = 0 &\text{ in }\; \mathbb{R}^d \times (0, T), \\[1.5mm]
         u(x,t) - u_0(x) h(t)= O(|x|^{1-d}) &\text{ as }\; |x| \rightarrow \infty, \mbox{for all } t \in (0,T),
    \end{cases}
    \label{transmissionh}
\eqn
where $\varepsilon_c^\prime$ is a positive constant. Here,  $\varepsilon_c$ and $\varepsilon_c^\prime$ are respectively the conductivity and permittivity of $D$. The background medium $\mathbb{R}^d\setminus \overline D$ is assumed to be with conductivity $\varepsilon_m$ and $0$ permittivity.
In the time-harmonic regime, we call $\varepsilon_c + i \omega \varepsilon_c^\prime$ the admittivity of  $D$ with $\omega$ being the operating frequency.

Let $\sigma := \varepsilon_c/\varepsilon_m$ and $\varepsilon := \varepsilon_c^\prime/\varepsilon_m$.
According to \cite{book}, we can reconstruct the polarization tensor
$M(\lambda(t), D)$ from the measurements of $u$ for $x$ being far away from $D$,
where
\beqn
    \lambda(t) = \f{(\sigma +1) h(t) + \varepsilon h'(t)}{2(\sigma -1) h(t) + 2\varepsilon h'(t)} \, , \quad t\in (0,T)\,.
    \label{curvelambda}
\eqn
From the above formula, we can see that a different pulse $h$ gives a different curve $\gamma := \{ \lambda(t) \in \mathbb{C} : t\in (0,T)\}$ on the complex plane $\mathbb{C}$.  Motivated from Section\,\ref{GPT}, we aim to find a pulse $h \in \mathcal{C}^\infty_c(0,T)$ for some $T>0$ such that the curve $\gamma := \{ \lambda(t) \in \mathbb{C} : t\in (0,T)\}$ given by \eqref{curvelambda} encloses the spectrum of $\sigma(\mathcal{K}_{\partial D}^*) \subset (-1/2,1/2]$.

Therefore, we shall investigate different possible shapes of the impulse $h$ which gives an optimal shape of the curve $\gamma$. Our desired $\gamma$ should be a smooth simple closed curve enclosing $(-0.5,0.5]$.  Then we can recover the eigenvalues of $\sigma(\mathcal{K}_{\partial D}^*)$ from $M(\lambda,\partial D)$ as in subsection \ref{func}.

By explicit calculations, if we let
\beqn
    p = \f{(\sigma +1) - 2 (\sigma -1) \lambda}{\varepsilon (1- 2 \lambda)} \, ,
\eqn
then we have from \eqref{curvelambda} the following explicit form for the pulse $h$:
\beqn
h(t) = C \exp\left(\int_0^t p(s)ds\right),
\eqn
where $C$ is a constant. Letting $\lambda = A e^{2 \pi i t}$, we get
\beqn
h(t) = e^{-2t} (2 A e^{2 \pi i t} - 1)^{-\f{i}{2 \pi}}\,,
\label{pulseh}
\eqn
where the function $z^i$ is defined as $z^i := e^{i \log x}$. Note that although the solution in \eqref{pulseh} is not necessarily
compactly supported, we can always extend the function to a compactly supported smooth function on $(-\varepsilon, T + \varepsilon)$ for some $\varepsilon >0$.

Given parameters $\sigma = 3$, $\varepsilon = 2$ and $T = 20$, we have tried different shapes of the pulse $h(s)$.


\noindent \textbf{Example 1}\; In this example we choose $h(t) = A(t) \phi(t)$, where $\phi(t) = \exp(\f{(t-a)\pi}{2 \sigma_0}i+\f{\pi}{2}i)$ and $A(t) = \f{1}{\sqrt{2 \pi}\sigma_0}\exp\left(-\f{(t-a)^2}{2 \sigma_0^2}\right)$ with $\sigma_0 = 0.3, a = 3$. The real part of the curve $h(t)$ and its corresponding curve $\lambda(t)$ on the complex plane $\mathbb{C}$ according to (\ref{curvelambda}) are
shown in Figure \ref{test2}.

\begin{figurehere}
     \begin{center}
     \vskip -0.2truecm
        \scalebox{0.7}{\includegraphics{./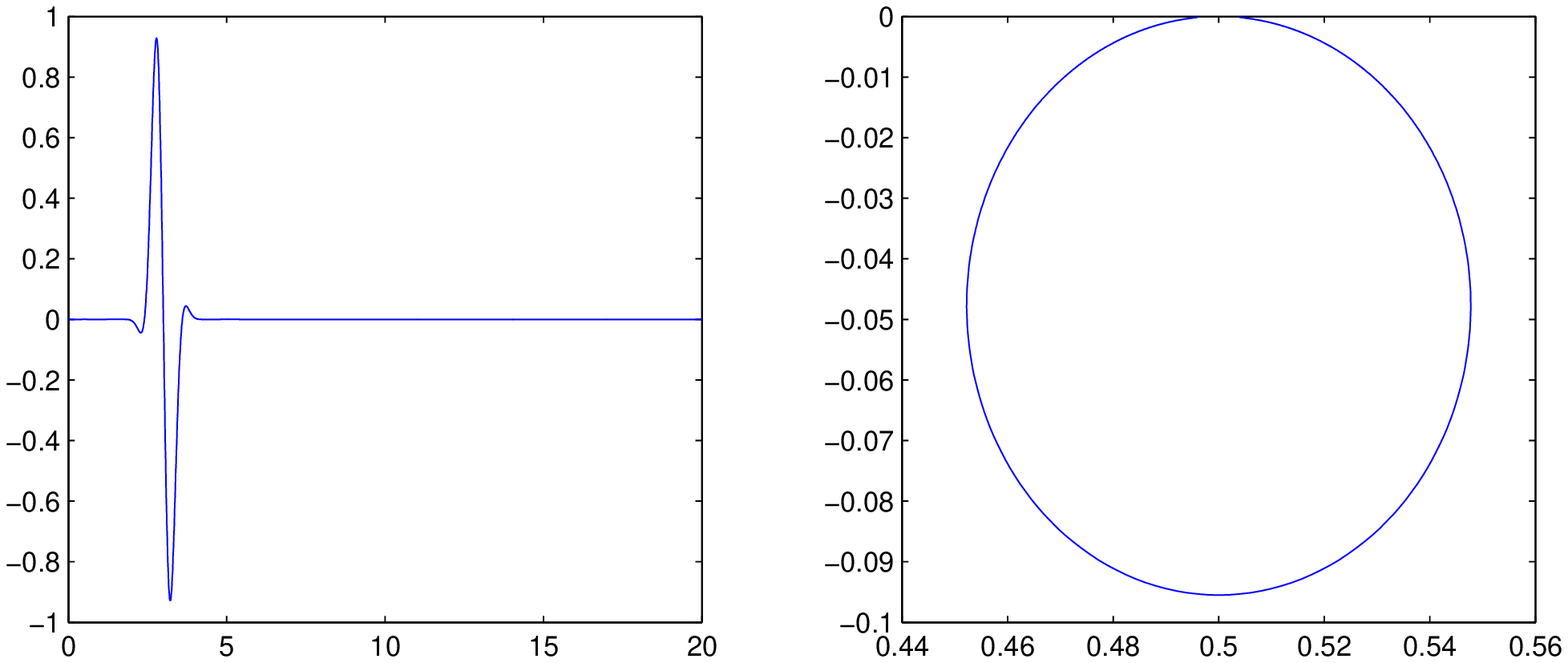}}\\
     \vskip -0.2truecm
     \caption{\small Real part of impulse $h(t)$ in Example 1 (left);
     Curve $\lambda(t)$ according to (\ref{curvelambda}) (right).}\label{test2}
     \end{center}
 \end{figurehere}

\noindent \textbf{Example 2}\; Choose
$h = -A(t) \phi(t)$, where $\phi(t) = \exp(i \cos(t-a) \pi)$ and $A(t) = \f{1}{\sqrt{2 \pi}\sigma_0}\exp\left(-\f{(t-a)^2}{2 \sigma_0^2}\right)$ with $\sigma_0 = 0.3, a = 3$. Figure \ref{test3} shows the real part of the curve $h(t)$ and its corresponding curve $\lambda(t)$ according to (\ref{curvelambda}).

\begin{figurehere}
     \begin{center}
     \vskip -0.2truecm
           \scalebox{0.7}{\includegraphics{./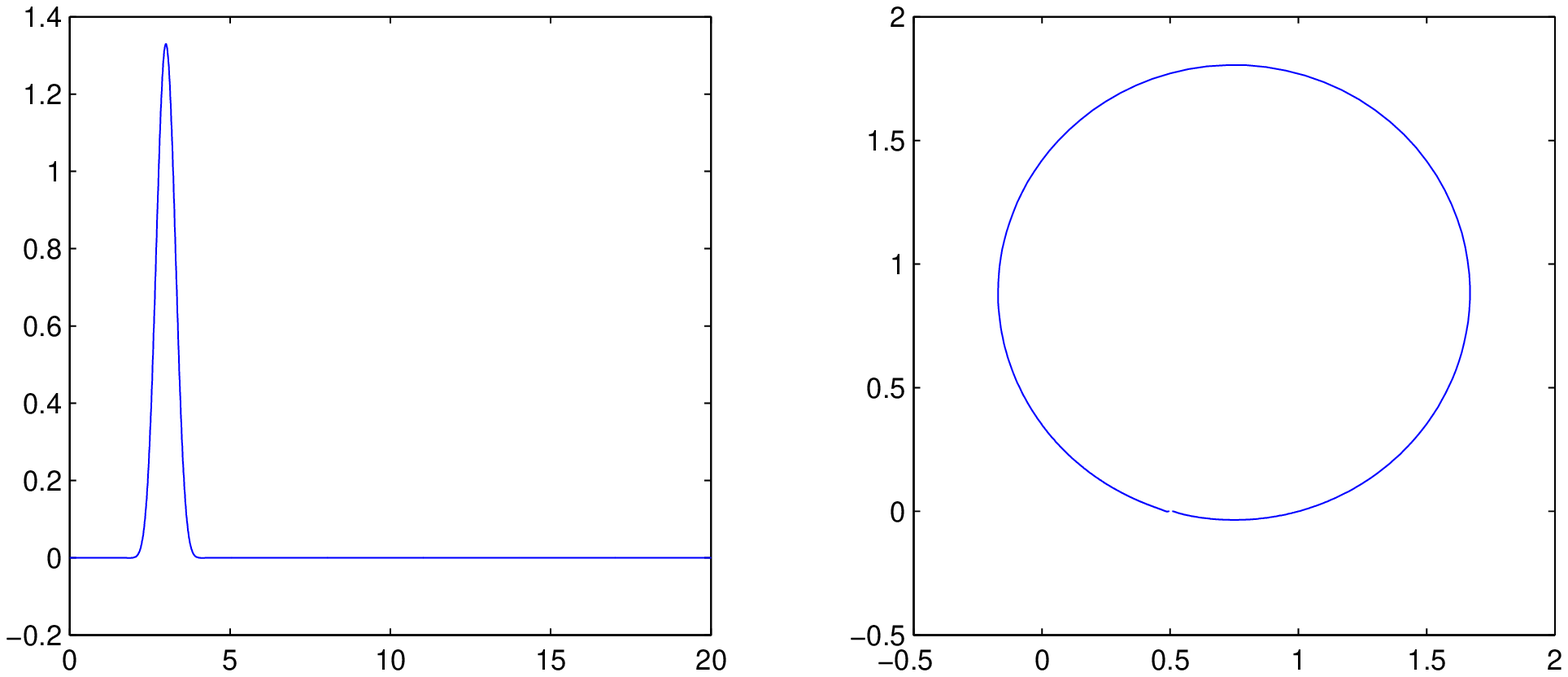}}\\
            \vskip -0.2truecm
     \caption{\small Real part of impulse $h(t)$ in Example 2 (left);
     Curve $\lambda(t)$ according to (\ref{curvelambda}) (right).}\label{test3}
     \end{center}
 \end{figurehere}

%

\noindent \textbf{Example 3}\; We choose $h = e^{-2t} (2 A e^{2 \pi i t} - 1)^{-\f{i}{2 \pi}}$. The real part of the curve $h(t)$ and its corresponding curve $\lambda(t)$ according to (\ref{curvelambda}) is shown on Figure \ref{test5}.

\begin{figurehere}
     \begin{center}
     \vskip -0.2truecm
           \scalebox{0.7}{\includegraphics{./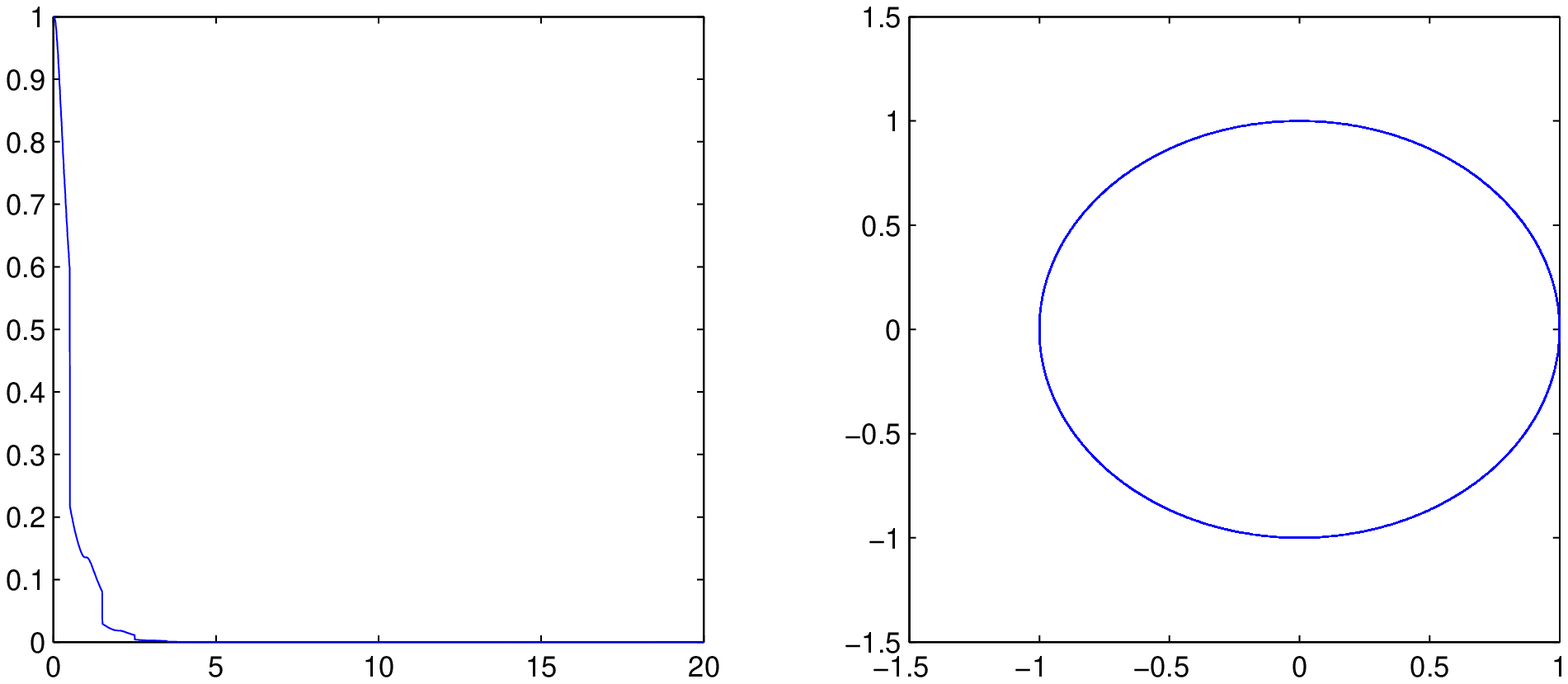}}\\
            \vskip -0.2truecm
     \caption{\small Real part of impulse $h(t)$ in Example 3 (left);
     Curve $\lambda(t)$ according to (\ref{curvelambda}) (right).}\label{test5}
     \end{center}
 \end{figurehere}

\section{Multiply connected objects} \label{multipleobject}
In this section, we briefly investigate the eigenvalue of the Neumann-Poincar\'e operator of a domain consisting of two identical copies of a non-overlapping shape with the same contrast.  Let the shape $D_1$ be given, we consider the shape
\beqn
D_v := D_1 \bigcup D_2,
\label{da}
\eqn
where $D_2 := D_1 + v$ and $v \in \mathbb{R}^2$ is such that the distance $d(D_1,D_2)$ between $D_1$ and $D_2$ is positive.
The Neumann-Poincar\'e operator $\mathbb{K}^*_{\partial D_v}$ associated with $D_v$ is given by \cite{ciraolo}
\beqn
     \mathbb{K}^*_{\partial D_v}:=
    \begin{pmatrix}
    \mathcal{K}^*_{\partial D_1} & \f{\partial}{\partial \nu_1} \mathcal{S}_{\partial D_2}\\
    \f{\partial}{\partial \nu_2} \mathcal{S}_{\partial D_1} &\mathcal{K}^*_{\partial D_2}
    \end{pmatrix} .
    \label{matrixda}
\eqn
We are interested in how the eigenvalues of $\mathbb{K}^*_{\partial D_v}$ behave as $v$ varies, and particularly when $d(D_1,D_2)\rightarrow 0$. As an example, we consider an ellipse, $D_1$, of the form \eqref{ellipsehaha}.
\begin{figurehere}
     \begin{center}
     \vskip -0.1truecm
           \scalebox{0.4}{\includegraphics{./ellipse}}\\
            \vskip -0.5truecm
     \caption{\small The ellipse $D_1$\,.}\label{ellipse}
     \end{center}
 \end{figurehere}
Letting $v = (2^k+2) (0,1) $
where $k=5-n$ and $n=1,2,\cdots,10$, we observe the change of the spectrum of $\mathbb{K}^*_{\partial D_v}$.  Figure \ref{multipletest} shows the eigenvalues of $\mathbb{K}^*_{\partial D_v}$ which is larger than $0.0005$ as $v$ varies.
\begin{figurehere}
        \hfill{}   \hfill{} \hfill{}   \hfill{}

        \hfill{}\includegraphics[clip,width=0.45\textwidth]{./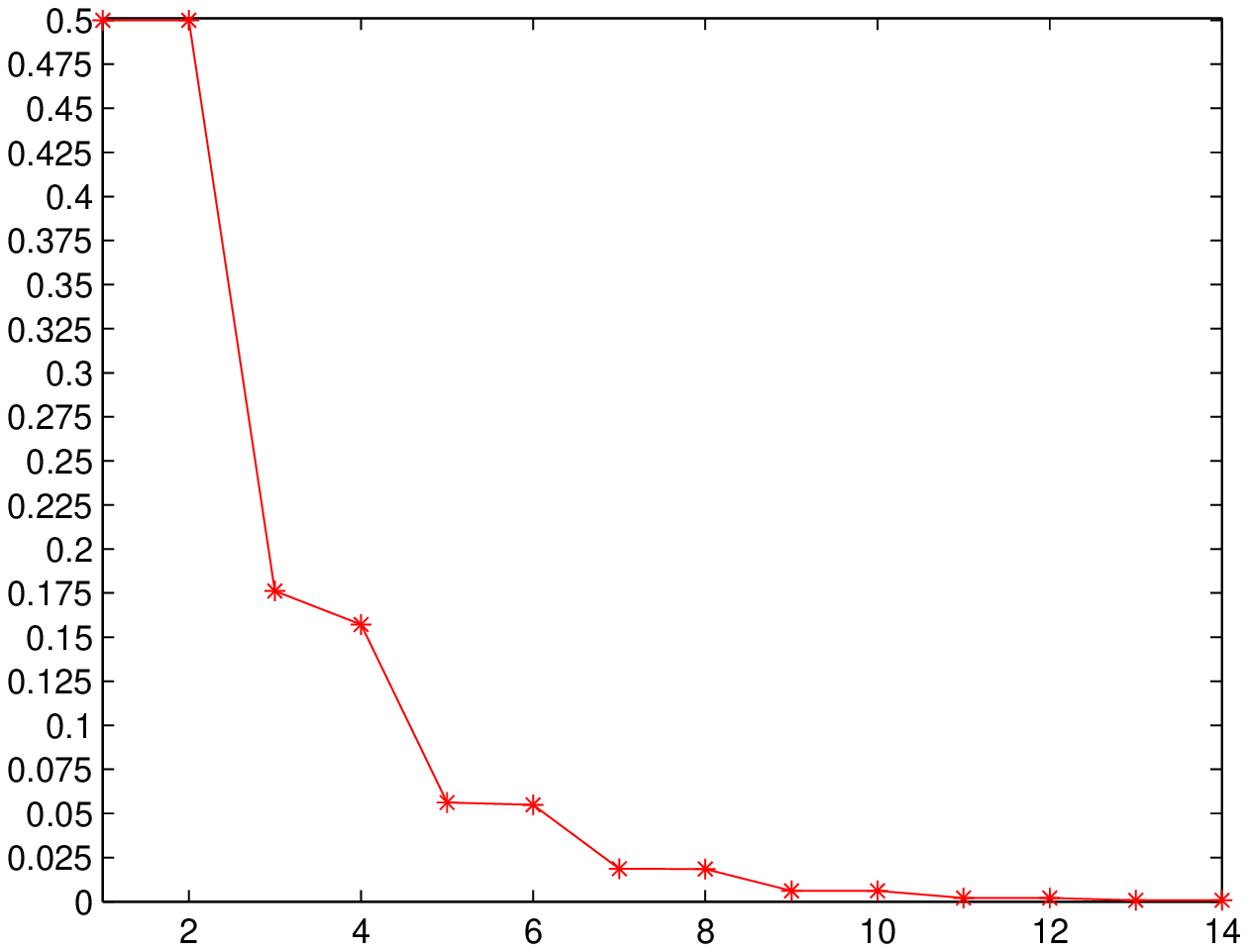}\hfill{}
        \hfill{}\includegraphics[clip,width=0.45\textwidth]{./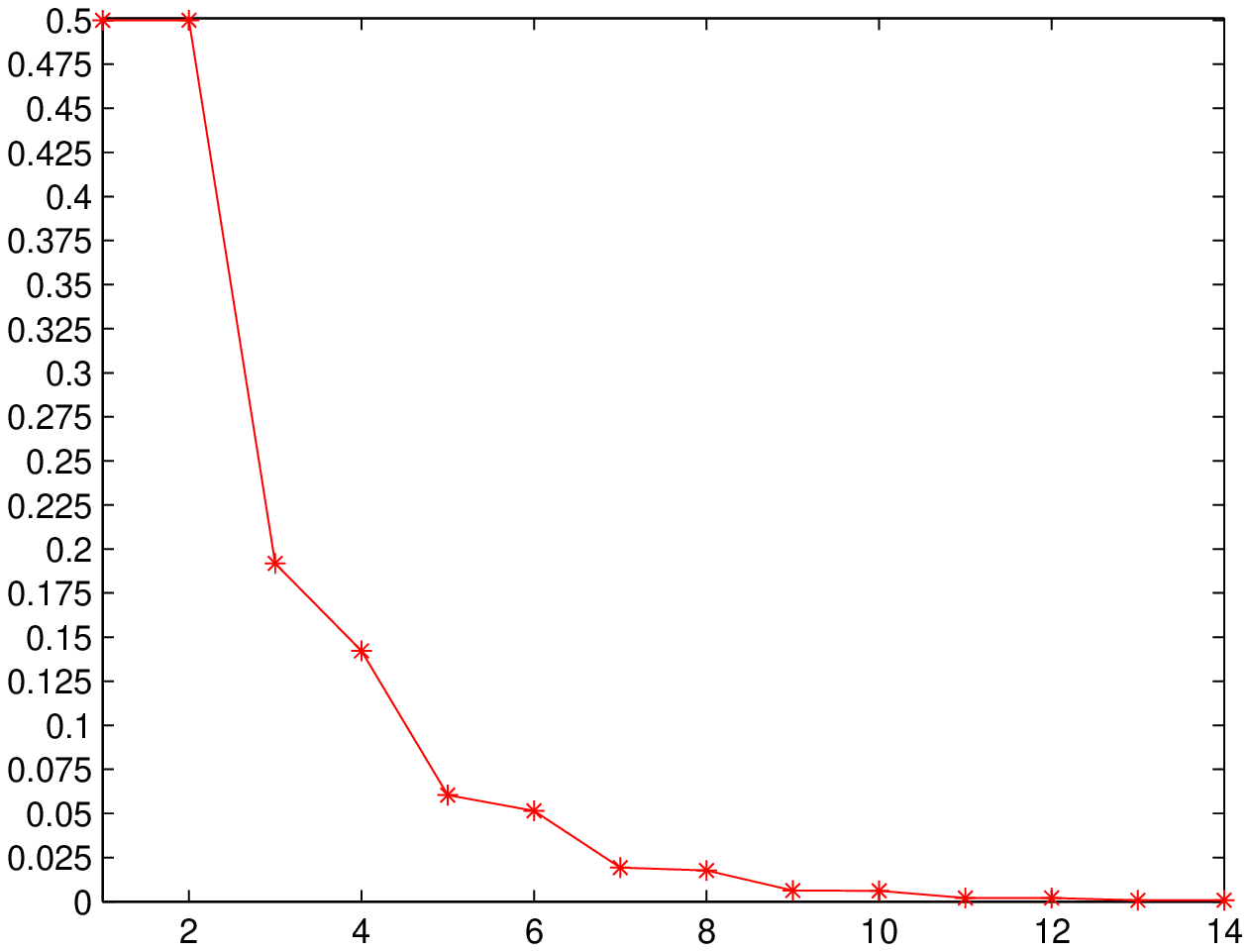}\hfill{}

        \hfill{}(a)\hfill{} \hfill{}(b)\hfill{}

        \hfill{}\includegraphics[clip,width=0.45\textwidth]{./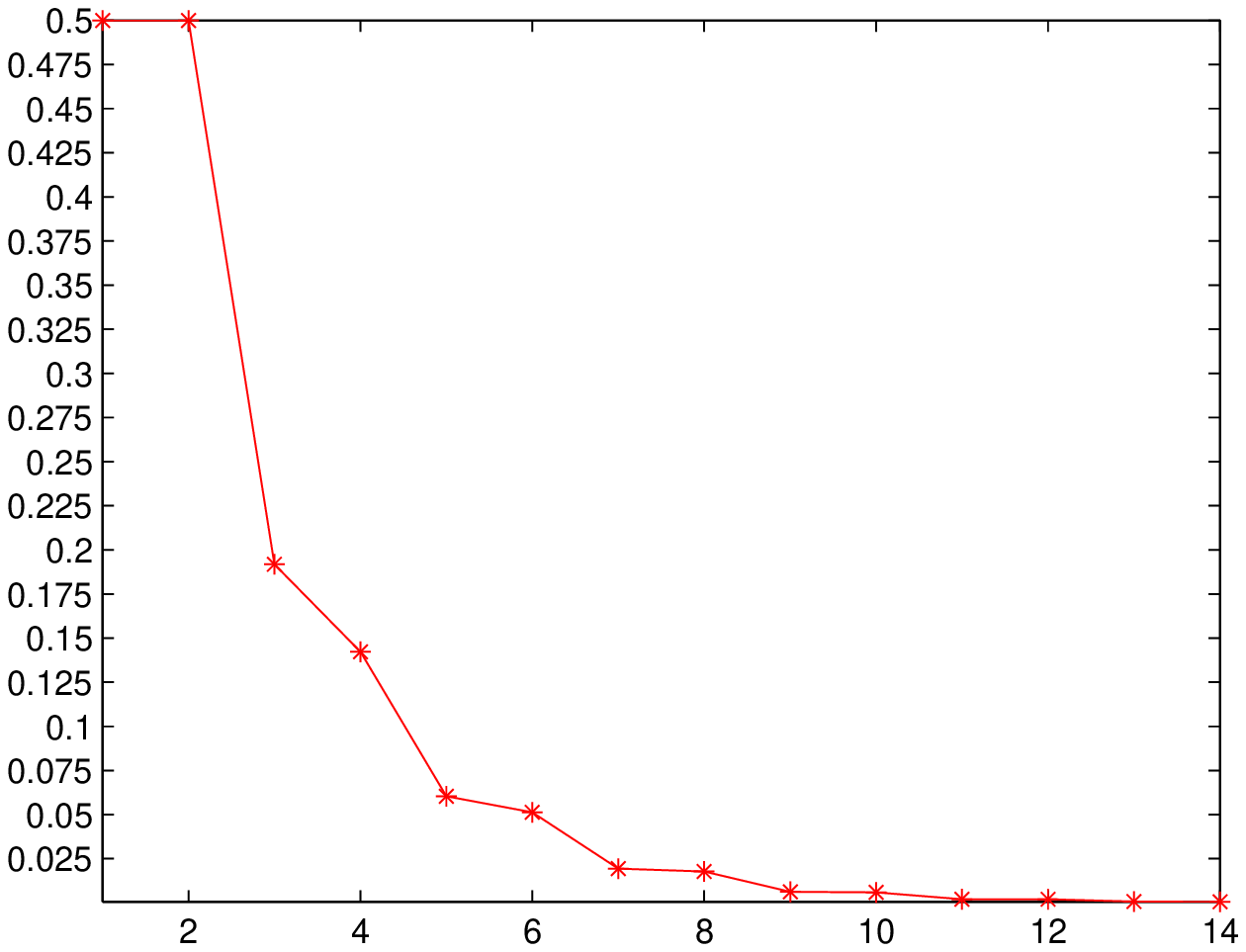}\hfill{}
        \hfill{}\includegraphics[clip,width=0.45\textwidth]{./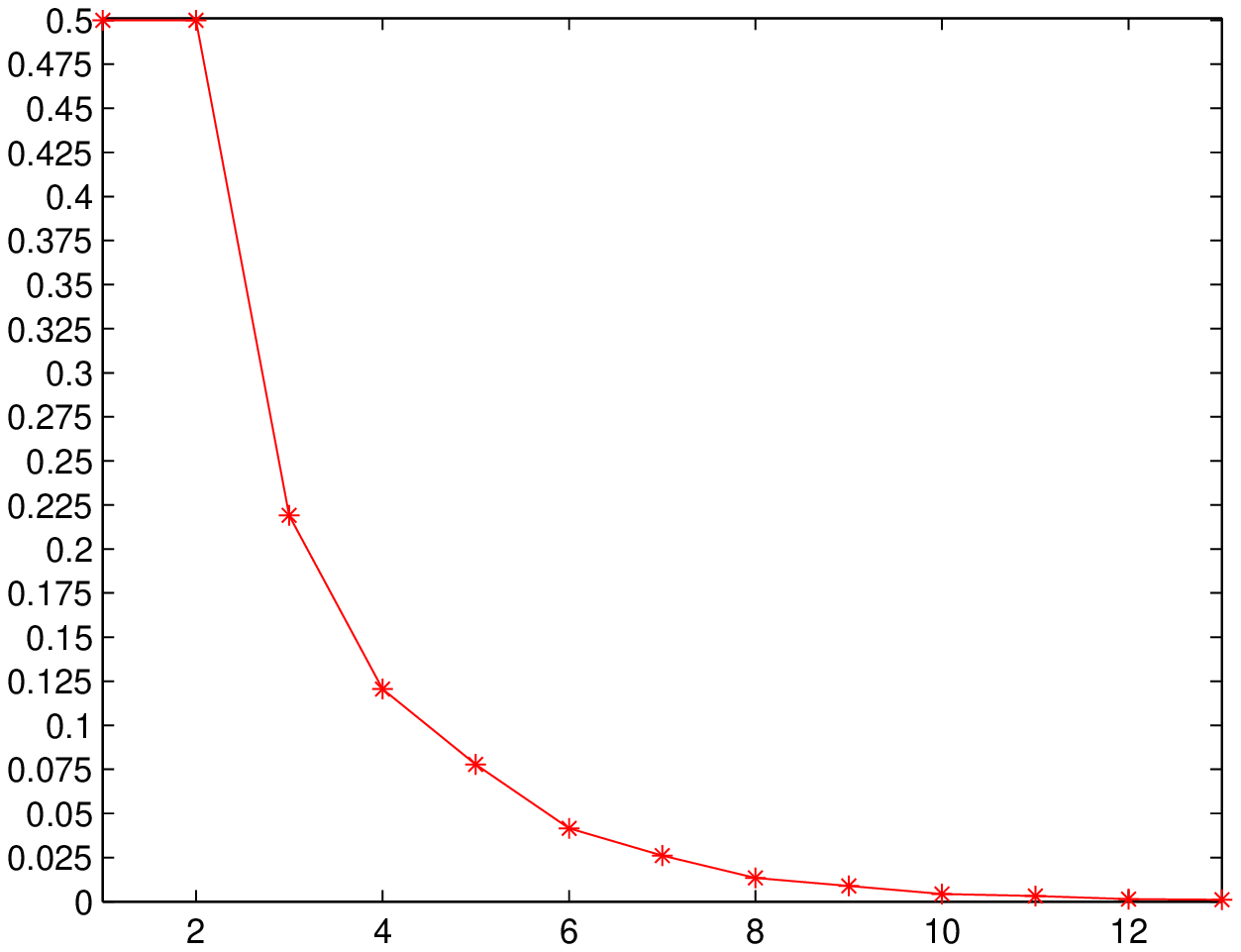}\hfill{}

        \hfill{}(c)\hfill{} \hfill{}(d)\hfill{}

        \hfill{}\includegraphics[clip,width=0.45\textwidth]{./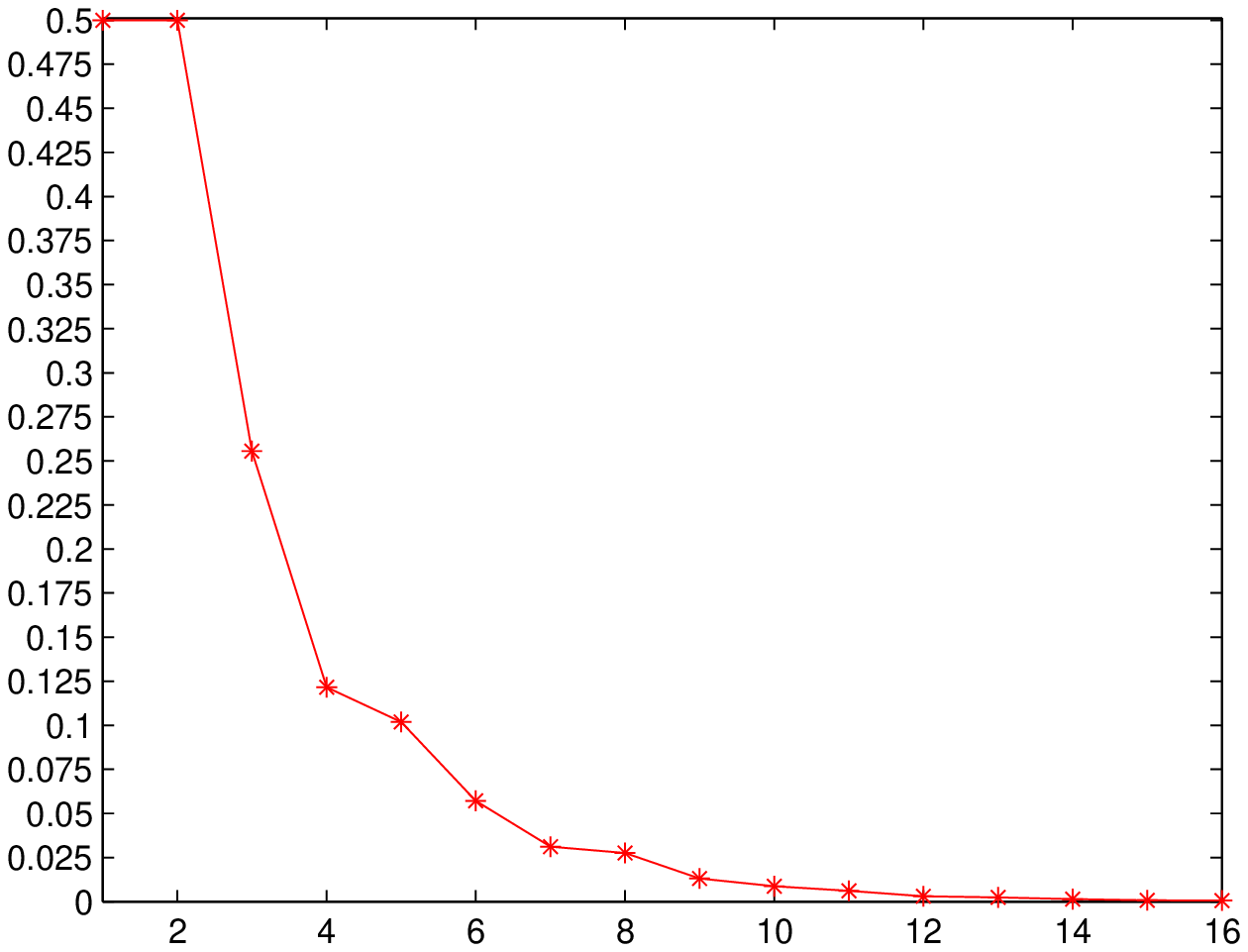}\hfill{}
        \hfill{}\includegraphics[clip,width=0.45\textwidth]{./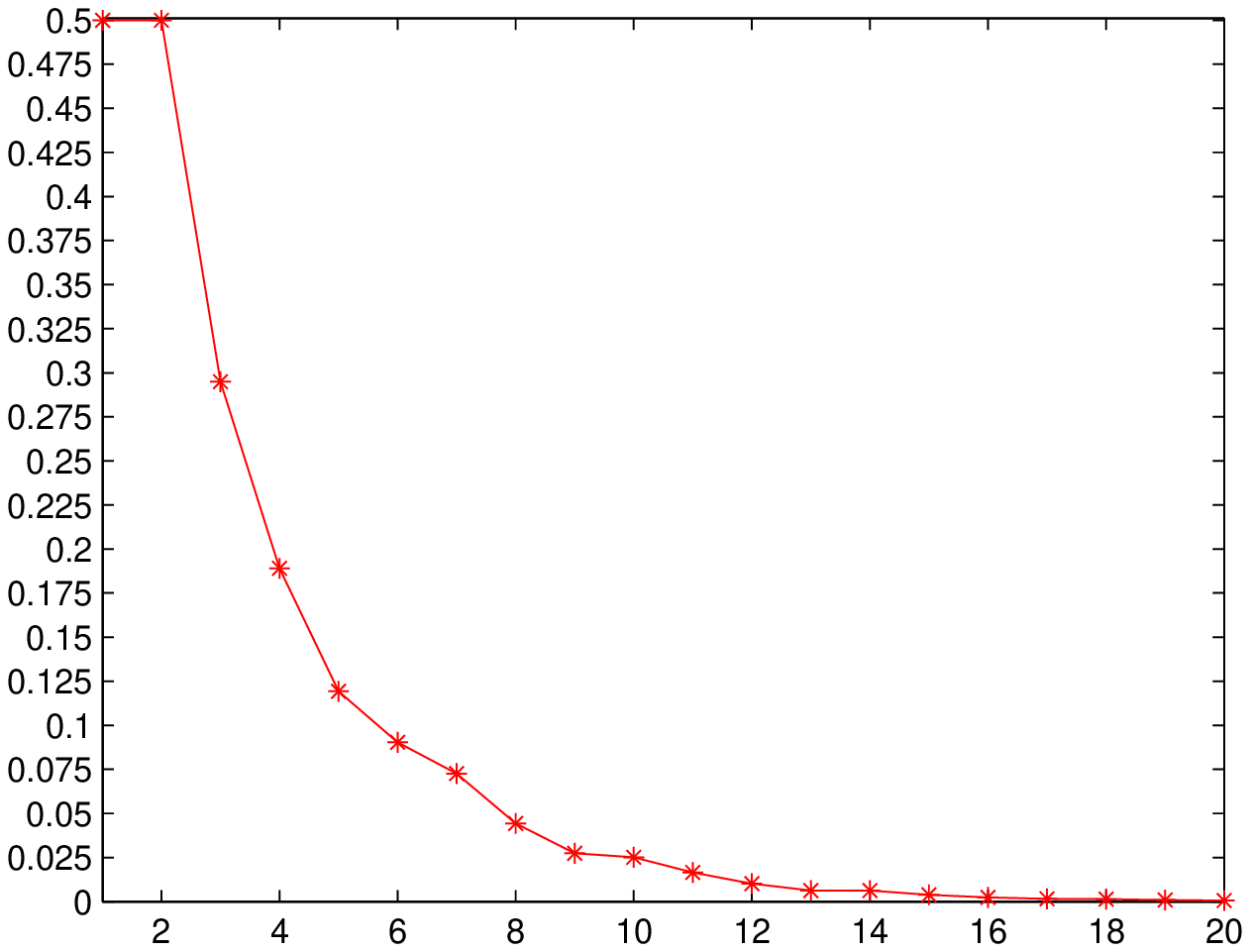}\hfill{}

        \hfill{}(e)\hfill{} \hfill{}(f)\hfill{}

        \hfill{}\includegraphics[clip,width=0.45\textwidth]{./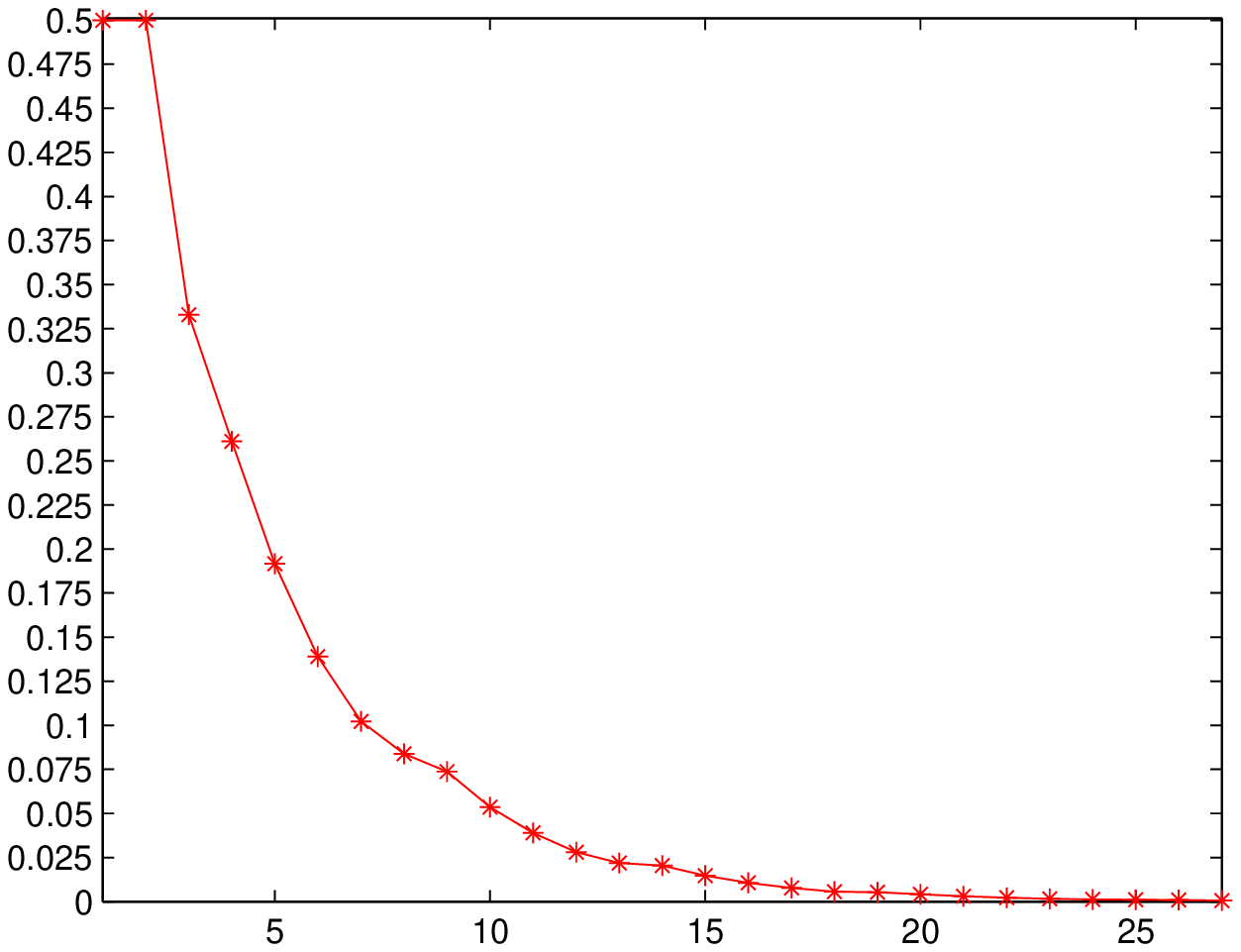}\hfill{}
        \hfill{}\includegraphics[clip,width=0.45\textwidth]{./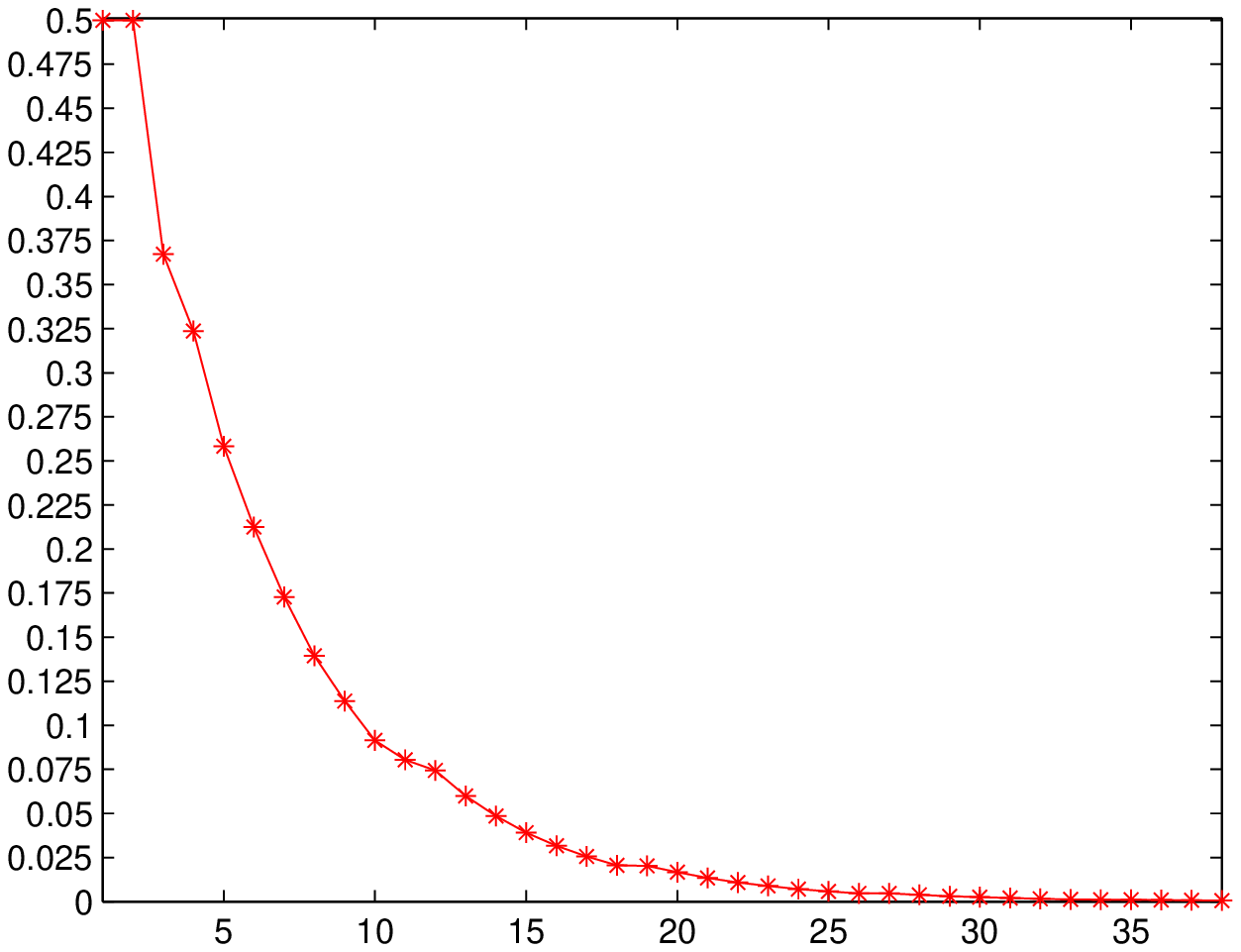}\hfill{}

        \hfill{}(g)\hfill{} \hfill{}(h)\hfill{}

        \hfill{}\includegraphics[clip,width=0.45\textwidth]{./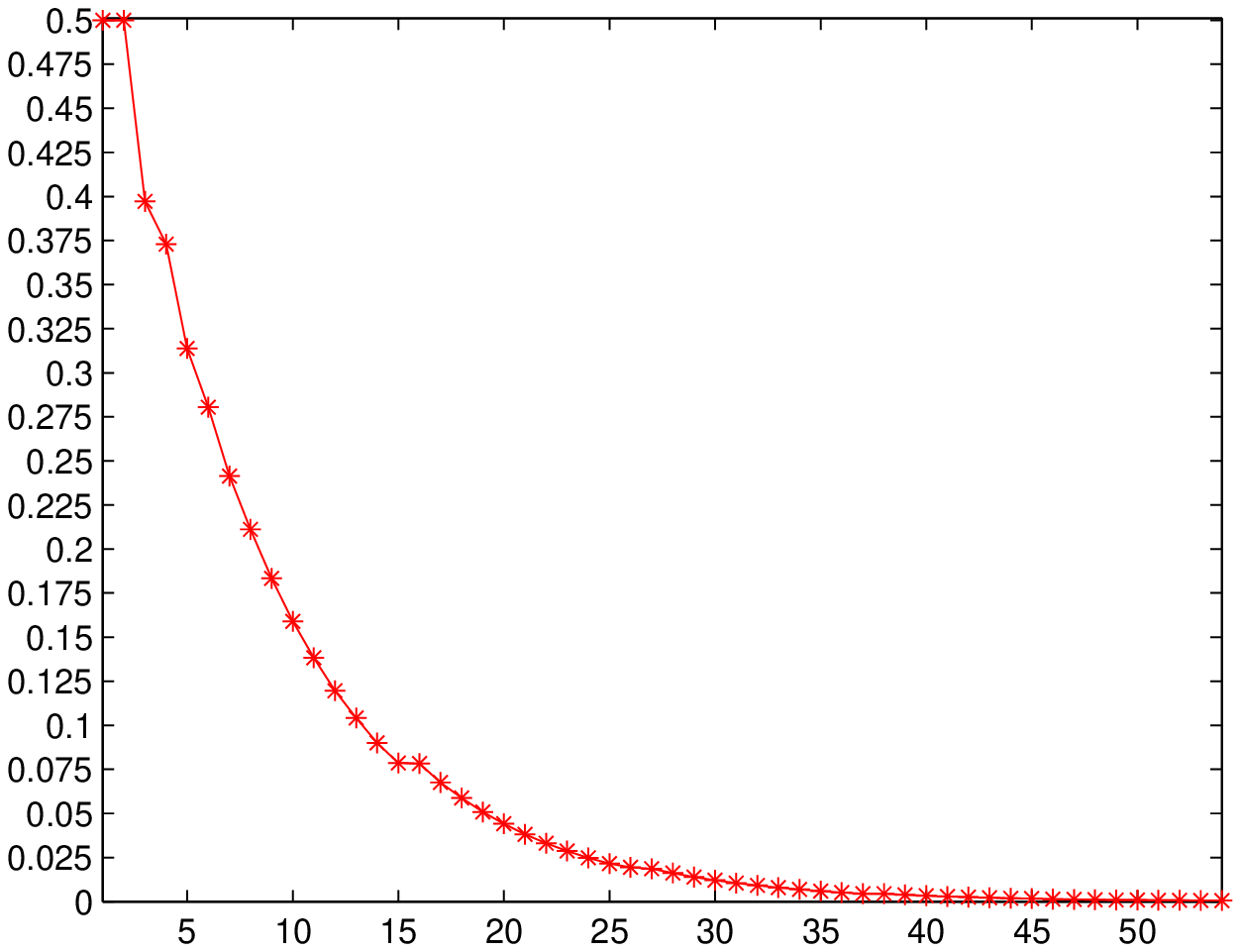}\hfill{}
        \hfill{}\includegraphics[clip,width=0.45\textwidth]{./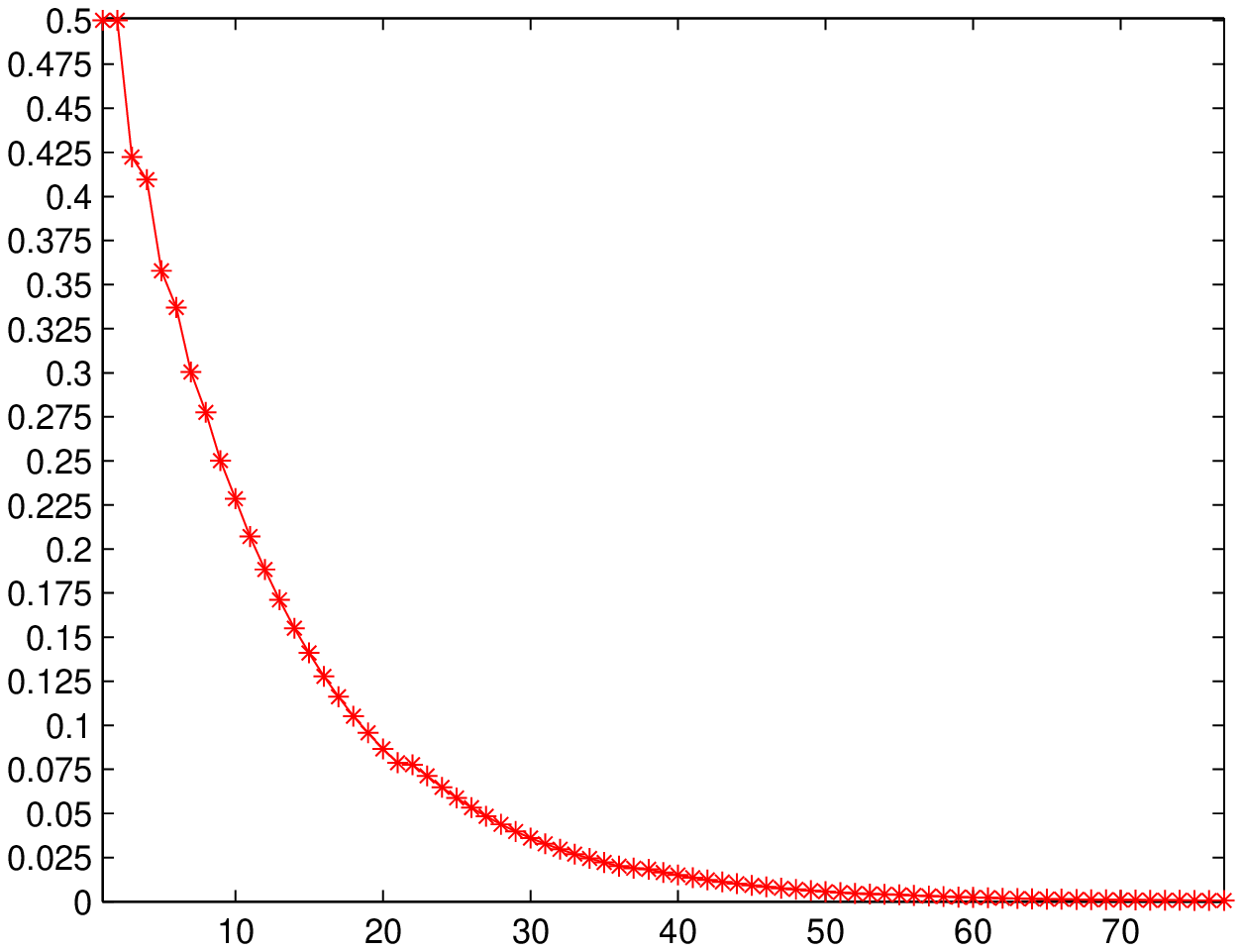}\hfill{}

        \hfill{}(i)\hfill{} \hfill{}(j)\hfill{}
        \vskip -0.2truecm
         \caption{\small Spectrum of $\mathbb{K}^*_{\partial D_v }$ in \eqref{matrixda} as $k=5-n$ with $n=1,2,\cdots,10$, starting from (a) with $n = 1$ to (j) with $n = 10$.}\label{multipletest}
\end{figurehere}

We note that the spectrum converges to a smoother curve where there are fewer ``steps".  Moreover, the multiplicity of the eigenvalue $1/2$ reflects the number of connected components of $D_v$; see \cite{ciraolo, triki2}.

\end{document}